\theoremstyle{plain}
\def\endproof{\hspace*{\fill}\mbox{\ \rule{.1in}{.1in}}\medskip }
\newtheorem{theorem}{Theorem}[section]
\newtheorem{lemma}[theorem]{Lemma}
\newtheorem{proposition}[theorem]{Proposition}
\theoremstyle{definition}
\newtheorem{example}[theorem]{Example}
\numberwithin{equation}{section}
\numberwithin{figure}{section}
\def\Det{{\mathcal{D}et}\,}
\def\cc{{\rm curl}\,{\rm curl}\,} 
\def \Sym{{\rm sym}\,} 
\def\R{{\mathbb R}}
\definecolor{Green}{rgb}{0, 0.65,0}
\begin{document}
\title[Visualization of the anomalous Monge-Amp\`ere solutions]
{Visualization of the convex integration solutions \\ to the Monge-Amp\`ere equation}
\author{Luca Codenotti}
\author{Marta Lewicka}\footnote{University of Pittsburgh, Department of Mathematics,
301 Thackeray Hall, Pittsburgh, PA 15260, USA. \\ E-mail: luc23@pitt.edu, lewicka@pitt.edu }

\begin{abstract}
In this article, we implement the algorithm based on the convex
integration result proved in \cite{LP}
and obtain visualizations of the first iterations of the Nash-Kuiper
scheme, approximating the anomalous solutions to the Monge-Amp\`ere
equation in two dimensions.
\end{abstract}

\maketitle

\section{Introduction}

The purpose of this paper is to implement the technique of convex
integration to obtain and analyze the visualizations of the H\"older regular
$\mathcal{C}^{1,\alpha}$ solutions to the Monge-Amp\`ere equation:
\begin{equation} \label{MA}
\Det \nabla^2 v  \doteq  -\frac 12 \cc  (\nabla v \otimes \nabla v) =
f \qquad \mbox{ in } \Omega\subset\R^2.
\end{equation}
We rely on the results of the paper \cite{LP}, where the Nash-Kuiper
iteration was shown to be adaptable to prove flexibility of the very weak solutions
to (\ref{MA}) in the regularity regime $0<\alpha<\frac{1}{7}$. More
precisely, the following holds:

\begin{theorem}\label{weakMA}   \cite{LP}
Let $f \in L^{7/6} (\Omega)$ on an open, bounded, simply connected domain
$\Omega\subset\mathbb{R}^2$.  Fix an exponent $\alpha\in (0,\frac{1}{7}).$
Then the set of functions
$v\in\mathcal{\mathcal{C}}^{1,\alpha}(\bar\Omega)$ that satisfy
(\ref{MA}) in the sense of distributions,
is dense in the space $\mathcal{\mathcal{C}}^0(\bar\Omega)$. 
Namely, for every $v_0\in \mathcal{\mathcal{C}}^0(\bar\Omega)$ there exists a sequence
$v_n\in\mathcal{\mathcal{C}}^{1,\alpha}(\bar\Omega)$ converging
uniformly to $v_0$ and solving  (\ref{MA}).
When $f\in L^p(\Omega)$ and $p\in (1,\frac{7}{6})$, the same result is true
for any exponent $\alpha\in (0,1- \frac{1}{p})$.
\end{theorem}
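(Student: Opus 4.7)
The plan is to recast (\ref{MA}) as a system in two fields and then run a Nash--Kuiper scheme on it. Since $\Omega$ is simply connected, a symmetric matrix field $A$ on $\Omega$ satisfies $\cc A=0$ if and only if $A=\Sym\nabla w$ for some vector field $w$. Fixing a reference matrix field $A_f$ with $-\cc A_f=f$ (produced from $f$ by an Airy-type potential construction, using $f\in L^p$), equation (\ref{MA}) is equivalent to finding a pair $(v,w)$ with
\[
\tfrac{1}{2}\,\nabla v\otimes\nabla v+\Sym\nabla w=A_f.
\]
I would call such a pair a \emph{subsolution} of (\ref{MA}) when the \emph{deficit} $\mathcal D(v,w):=A_f-\tfrac{1}{2}\nabla v\otimes\nabla v-\Sym\nabla w$ is pointwise symmetric positive definite, and a solution when $\mathcal D\equiv 0$. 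The proof then reduces to showing that any strict subsolution can be perturbed by a $\mathcal{C}^0$-small, $\mathcal{C}^{1,\alpha}$-controlled correction into a true solution.

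First, I would build an initial strict subsolution $(\tilde v_0,\tilde w_0)$ with $\|\tilde v_0-v_0\|_{\mathcal{C}^0}<\varepsilon$ by taking $\tilde v_0$ a smooth mollification of $v_0$ and choosing $\tilde w_0$ with sufficient linear growth (e.g.\ of the form $-Mx$ for $M$ large) so that the deficit dominates $M\,\mathrm{Id}$. The convex integration iteration then produces a sequence $(v_n,w_n)$ with deficit sizes $\delta_n\to 0$, each stage $(v_n,w_n)\mapsto(v_{n+1},w_{n+1})$ consisting of: (i) mollifying the pair at scale $\ell_n$; (ii) applying a convex combination lemma on symmetric positive definite matrices to write the mollified deficit as a finite sum of rank-one tensors $\sum_k a_k^2\,\eta_k\otimes\eta_k$ with smooth nonnegative coefficients and fixed unit directions; (iii) adding oscillatory Nash corrugation corrections of the schematic form $v_{n+1}=v_n+\sum_k\lambda_n^{-1}a_k\,\Gamma_k(\lambda_n\, x\cdot\eta_k)$, together with compensating corrections to $w_n$, at a large frequency $\lambda_n$. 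The profiles $\Gamma_k$ are chosen so that, modulo errors of order $\lambda_n^{-1}$, the quadratic expression at stage $n+1$ absorbs exactly $\sum_k a_k^2\,\eta_k\otimes\eta_k$, leaving a new deficit $\mathcal D_{n+1}$ of size $\delta_{n+1}\ll\delta_n$.

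The main obstacle, and the source of the exponent $\frac{1}{7}$, lies in tuning the sequences $(\delta_n,\lambda_n,\ell_n)$ so that the loop closes. The corrugation contributes $\mathcal{C}^1$-increments of order $\delta_n^{1/2}$ and $\mathcal{C}^2$-increments of order $\lambda_n\delta_n^{1/2}$; the Nash commutator error arising from the mollification at scale $\ell_n$ must be absorbed into $\delta_{n+1}$; and convergence of $\nabla v_n$ in $\mathcal{C}^{0,\alpha}$, obtained by interpolation between the $\mathcal{C}^1$ and $\mathcal{C}^2$ estimates, demands $\sum_n(\lambda_n\delta_n^{1/2})^\alpha(\delta_n^{1/2})^{1-\alpha}<\infty$. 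Under super-exponential ansatzes $\lambda_{n+1}=\lambda_n^b$, $\delta_n=\lambda_n^{-2a}$, optimizing $(a,b)$ against these three constraints yields compatibility precisely when $\alpha<\frac{1}{7}$; for $f\in L^p$ with $p\in(1,\tfrac{7}{6})$ the weaker integrability of $A_f$ tightens the initial-deficit constraint and correspondingly lowers the threshold to $\alpha<1-\tfrac{1}{p}$. Once all estimates close, $v_n\to v$ in $\mathcal{C}^{1,\alpha}(\bar\Omega)$ with $v$ solving (\ref{MA}) and $\|v-v_0\|_{\mathcal{C}^0}<2\varepsilon$; letting $\varepsilon\to 0$ gives density.
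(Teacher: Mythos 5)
Your proposal follows the same overall route as the paper: reformulate (\ref{MA}) via an auxiliary field $w$ into $\tfrac12\nabla v\otimes\nabla v+\Sym\nabla w=A$ with $-\cc A=f$; build a strict subsolution by mollifying $v_0$ and pushing the deficit positive with a linear shift in $w$; then run a Nash--Kuiper loop consisting of mollification, a rank-one decomposition of the (positive) deficit with fixed directions, corrugations in $v$ with compensating corrections in $w$, and $\mathcal{C}^1$-$\mathcal{C}^2$ interpolation. This matches the architecture of Propositions~\ref{Stage3} and~\ref{prop2.4.4}.

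The genuine gap is the assertion that ``optimizing $(a,b)$ against these three constraints yields compatibility precisely when $\alpha<\tfrac17$.'' Nothing in your sketch actually produces the number $7$, because the decisive ingredient is missing: since $\dim\R^{2\times 2}_{sym}=3$, the deficit must be decomposed into exactly \emph{three} rank-one pieces $\sum_{k=1}^3 a_k^2\,\eta_k\otimes\eta_k$, and the three corrugations within one stage must be applied \emph{sequentially} at frequencies increasing by a fixed ratio $\sigma$ so that the second derivatives created by step $k-1$ are controllable at step $k$. (Your formula superposes all three at the same frequency $\lambda_n$, which the error bookkeeping does not support.) Consequently one stage inflates $\|\nabla^2 v\|$ by a factor $\sigma^3$ while reducing the deficit by a factor $\sigma^{-s}$ with $s\in(0,1)$ (the strict bound $s<1$ coming from the linear-in-$\|D\|$ part of the new deficit, as in (\ref{DefectStage2})). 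The interpolation condition $\sum_n\|\nabla(v_{n+1}-v_n)\|_0^{1-\alpha}\|\nabla^2(v_{n+1}-v_n)\|_0^{\alpha}<\infty$ then reads $3\alpha<\tfrac{s}{2}(1-\alpha)$, i.e.\ $\alpha<\tfrac{s}{6+s}$, which maximizes to $\alpha<\tfrac17$ as $s\to1^-$. The paper uses geometric ratios $\sigma_k\to\sigma_{\max}$ rather than a super-exponential ansatz $\lambda_{n+1}=\lambda_n^b$; either parametrization can be made to work, but the factor of three, and the $s<1$ obstruction, are what pin down the exponent, and neither appears in your argument. A secondary inaccuracy: the threshold $\alpha<1-\tfrac1p$ for $f\in L^p$ does not come from ``tightening the initial-deficit constraint'' but from the H\"older regularity of $A$. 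Solving $-\Delta\lambda=f$ with $f\in L^p$, $p\in(1,2)$, yields $A\in\mathcal{C}^{0,\beta}$ with $\beta=2-\tfrac2p$, and the mollification error $\|A*\varphi_\ell-A\|_0\lesssim \ell^{\beta}[A]_{\beta}$ forces the additional constraint $\alpha<\tfrac{\beta}{2}=1-\tfrac1p$, which supersedes $\tfrac17$ precisely when $p<\tfrac76$.
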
 

We note that by a straightforward approximation argument, if $v\in W^{2,2}$ then $\Det
\nabla^2 v = \det \nabla^2 v$ a.e. in $\Omega$, 
where $\nabla^2 v\in L^2(\Omega)$ stands for the standard Hessian matrix field of $v$.  
It has been shown \cite{LMP, Pak} that in this case, condition $f\geq c>0$ in $\Omega$
(respectively, $f\equiv 0$ in $\Omega$) implies that $v$ must be
$\mathcal{C}^1$ and globally convex or concave (respectively, developable).
Likewise \cite{LP, LP2}, the flexibility result of Theorem
\ref{weakMA} has its rigidity counterpart: if
$v\in\mathcal{C}^{1,\alpha}(\bar\Omega)$ with $\alpha>\frac{2}{3}$
solves (\ref{MA}) for the right hand side that is positive Dini
continuous (respectively, $f\equiv 0$ in $\Omega$) then $v$ is a convex
Alexandrov solution to $\det\nabla^2v=f$ (respectively, $v$ is developable).

\smallskip

For a better understanding of the above statements and their relation
to nonlinear elasticity of thin films, we refer to \cite{LP}. 
We now recall the connection between solutions to \eqref{MA} and the  problem of
isometric immersion of Riemannian metrics. Consider the
one-parameter family of metrics $h_\epsilon=\mbox{Id}_2+2\epsilon^2A$
on $\Omega$, where $A:\Omega\to\R^{2\times 2}_{sym}$ is a perturbation
tensor that satisfies: 
\begin{equation}\label{curlsol}
-\cc A = f,
\end{equation} 
for example one can take: $A=-\Delta^{-1}(f)\mbox{Id}_2$. 
Consider also the pull-back metrics
$(\nabla\phi_\epsilon)^T\nabla\phi_\epsilon$ of the family of
deformations $\phi_\epsilon = id +\epsilon ve_3 + \epsilon^2w:\Omega\to\R^3$.
Computing the Gauss curvature of $h_\epsilon$:
$$\kappa(h_\epsilon) = -\epsilon^2\cc A + o(\epsilon^2)$$
and the curvature of the pull-back metrics: $\kappa((\nabla\phi_\varepsilon)^T\nabla\phi_\varepsilon) =
-\epsilon^2\cc\big(\frac{1}{2}\nabla v\otimes\nabla v + \Sym \nabla w\big) +
o(\epsilon^2)$, we see that (\ref{MA}) can be interpreted as
requesting the out-of-plane displacement $v$ to be amenable to matching by a
higher order in-plane displacement $w$, to 
achieve the prescribed Gaussian curvature $f$, at its highest order
term. Indeed, on a simply connected domain the
kernel of the differential operator ``$\mbox{curl}\,\mbox{curl}$'' consists  
of the fields of the form $\Sym \nabla w$. Thus, equation (\ref{MA}) is equivalent to:
\begin{equation}\label{MA2} 
\frac 12 \nabla v \otimes \nabla v + \Sym \nabla w = A \qquad
\mbox{ in } \Omega,
\end{equation}
where $A$ satisfies (\ref{curlsol}) and $w:\Omega\to\R^2$ is an auxiliary variable (a Lagrange
multiplier). In the same spirit, equation (\ref{MA2}) can be interpreted as the
``infinitesimal isometry'' constraint, namely: the family 
$h_\epsilon$ should coincide, up to terms of order $\epsilon^2$, with the pull-back metrics
$(\nabla\phi_\epsilon)^T\nabla\phi_\epsilon$ defined above.

\smallskip

In order to construct $\mathcal{C}^{1,\alpha}$ solutions of (\ref{MA2}) with $v$ approximating a given
continuous $v_0$ (by density, it suffices to work with $v_0$ smooth),
it has been shown in \cite{LP} that a Nash-Kuiper
convex integration method can be employed. This method \cite{gromov,
  kuiper} modifies an initial ``short infinitesimal isometry'' $(v_0, w_0)$, i.e. a couple for whom
(\ref{MA2}) is satisfied with an inequality (hence a subsolution) rather than the equality,
towards an exact solution, in successive small steps.  Technical similarities with the isometric immersion
construction \cite{Nash1, Nash2, kuiper, CDS}
are based on the presence of the quadratic terms $(\nabla \phi_\epsilon)^T \nabla \phi_\epsilon$ and
$\nabla v \otimes \nabla v$ in both PDEs. For parallel application of
the convex integration technique to constructing the 
energy-dissipative solutions of the Euler equations and to the recent
resolution of the Onsager's conjecture, we refer to \cite{DS0, DS, B1, Is1}.

\smallskip

As we have mentioned, Theorem \ref{weakMA} follows from the main result in \cite{LP} below: 
 
\begin{theorem}\label{w2oi-hld}\cite{LP}
Let $\Omega\subset\mathbb{R}^2$ be an open and bounded domain. Let
$v_0\in\mathcal{C}^1(\bar\Omega)$, $w_0\in\mathcal{C}^1(\bar\Omega,\mathbb{R}^2)$ and $A \in
\mathcal{C}^{0,\beta} (\bar \Omega,  \R^{2\times 2}_{sym})$, for some $\beta\in (0,1)$, be such that: 
\begin{equation}\label{wA-inequ-holder}
\exists c_0>0 \qquad A - \big(\frac 12 \nabla v_0
\otimes \nabla v_0 + \Sym \nabla w_0\big)  \geq  c_0 {\rm Id}_2 \qquad \mbox{ in } \bar\Omega.
\end{equation} 
Then, for every exponent $\alpha\in \big(0,\min\big\{\frac{1}{7},
\frac{\beta}{2}\big\}\big),$
there exist sequences: $v_n \in \mathcal{C}^{1,\alpha}(\bar \Omega)$
which converges uniformly to $v_0$,
and $w_n \in \mathcal{C}^{1,\alpha} (\bar \Omega,\R^2)$ which
converges uniformly to $w_0$, satisfying:
\begin{equation}\label{2oi-holder} 
 \frac 12 \nabla v_n \otimes \nabla v_n + \Sym \nabla w_n = A \qquad \mbox{ in } \bar\Omega.
\end{equation}
\end{theorem}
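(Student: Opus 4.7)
The plan is to set up a Nash--Kuiper convex integration iteration that monitors the defect tensor
\[ D(v,w) = A - \bigl( \tfrac{1}{2} \nabla v \otimes \nabla v + \Sym \nabla w \bigr), \]
which by (\ref{wA-inequ-holder}) satisfies $D(v_0, w_0) \geq c_0 {\rm Id}_2$ at the base point. The idea is to produce a sequence of pairs $(v_n, w_n)$ in which each transition $(v_n,w_n)\mapsto (v_{n+1},w_{n+1})$ reduces the defect by a prescribed fraction while controlling the $\mathcal{C}^0$ size of the correction and the $\mathcal{C}^1$ growth of $v_n$. A judicious choice of parameters will then force the sequence to converge in $\mathcal{C}^{1,\alpha}$ to a pair solving (\ref{2oi-holder}) exactly.

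The fundamental building block is a one-dimensional \emph{primitive oscillation}. Given a unit vector $\eta\in\R^2$, an amplitude $a$ and a large frequency $\lambda$, I would add to $(v,w)$ a correction
\[ \delta v = \tfrac{a}{\lambda}\, \Gamma_1(\lambda\, x\cdot \eta), \qquad \delta w = -\tfrac{a}{\lambda}\, \Gamma_1(\lambda\, x\cdot \eta)\,\nabla v + \tfrac{a^2}{\lambda}\, \Gamma_2(\lambda\, x\cdot \eta)\, \eta, \]
where $\Gamma_1,\Gamma_2$ are $1$-periodic profiles tuned so that the increment of the Monge-Amp\`ere quadratic form equals
\[ \tfrac{1}{2}\nabla(v+\delta v)\otimes\nabla(v+\delta v) + \Sym\nabla(w+\delta w) - \bigl(\tfrac{1}{2}\nabla v\otimes\nabla v + \Sym\nabla w\bigr) = a^2\,\eta\otimes\eta + O(\lambda^{-1}). \]
A \emph{stage} then consists in decomposing the mollified positive-definite defect as a finite sum $\sum_k a_k^2\, \eta_k\otimes\eta_k$ over a fixed set of directions (three suffice in two dimensions) and applying the primitive step successively at well-separated frequencies in order to cancel each rank-one piece, modulo an error that can be made arbitrarily small.

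The main iteration alternates mollification of $(v_n, w_n)$ and of $A$ at a scale $\ell_n$ with a stage at frequency $\lambda_n$. I would then check the telescoping estimates
\[ \|v_{n+1}-v_n\|_{\mathcal{C}^0}\lesssim \lambda_n^{-1}, \qquad \|\nabla(v_{n+1}-v_n)\|_{\mathcal{C}^0}\lesssim 1, \qquad \|\nabla^2 v_{n+1}\|_{\mathcal{C}^0}\lesssim \lambda_n, \]
and analogous bounds on $w_n$ and on the residual defect, together with an $O(\ell_n^{\beta})$ Hölder error coming from mollifying $A$. Choosing $\lambda_n=\lambda_0^{b^n}$ with $b>1$ and $\ell_n=\lambda_n^{-c}$ with $c\in (0,1)$, convergence in $\mathcal{C}^{1,\alpha}$ then follows from interpolation between the first- and second-order bounds and summability of the resulting geometric series. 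Because the defect vanishes in the limit, the limiting pair satisfies (\ref{2oi-holder}) pointwise.

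The main obstacle, and the source of the exponent $\frac{1}{7}$, will be balancing the competing constraints in this parameter choice: the defect must decay geometrically across stages; the cross terms between the mollified Hessian $\nabla^2 v_n$ and a new oscillation of frequency $\lambda_n$ must remain subdominant; and the mollification error from $A$ enters at order $\ell_n^{\beta/2}$ in the $\mathcal{C}^{1,\alpha}$ accounting, explaining the upper bound $\alpha<\beta/2$. Optimizing these three couplings, one is led precisely to the threshold $\alpha<\min\{\frac{1}{7},\frac{\beta}{2}\}$ stated in the theorem; loosening any of them would violate summability of $\|\nabla(v_{n+1}-v_n)\|_{\mathcal{C}^{0,\alpha}}$ and destroy convergence.
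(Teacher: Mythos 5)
Your sketch follows essentially the same Nash--Kuiper template as the paper: the primitive one-dimensional corrugation matching Lemma~\ref{VW} and Proposition~\ref{OneStep}, a stage of three rank-one reductions along fixed directions $\eta_k$ as in Proposition~\ref{Stage3}, mollification of $(v,w,A)$ at each stage, and a $\mathcal{C}^1$--$\mathcal{C}^2$ interpolation to obtain $\mathcal{C}^{1,\alpha}$ convergence. Your heuristic for where the thresholds $\beta/2$ and $1/7$ originate is also in the right direction.

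There is, however, one genuine missing idea. Positive definiteness of the defect (hypothesis~\eqref{wA-inequ-holder}) does \emph{not} imply that it decomposes as $\sum_{k=1}^3 a_k^2\,\eta_k\otimes\eta_k$ with $a_k^2\geq 0$ for three \emph{fixed} directions: the set of such matrices is a simplex cone strictly inside the positive semidefinite cone. For instance $B=\left(\begin{smallmatrix}1 & 0.9\\ 0.9 & 1\end{smallmatrix}\right)$ is positive definite, yet with the paper's directions one gets $\phi_3=\tfrac{5}{8}(b_{22}-2b_{12})<0$. The paper resolves this at every stage (step~2 of Proposition~\ref{Stage3}, and part~(iv) of Lemma~\ref{CoeffFormula}) by adding an explicit affine shift to $w$ that pushes the defect deep into the simplex cone before extracting the amplitudes $a_k=\sqrt{\phi_k}$. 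Without this step, or an alternative such as a larger family of directions with a partition of unity, the primitive corrugation cannot even be set up. Two lesser imprecisions: the mollification error of $A$ enters at order $\ell^\beta$, not $\ell^{\beta/2}$ (the $\beta/2$ arises only after the coupling $\ell=\|D\|^{1/2}/M$), and the paper takes a geometric frequency schedule $\lambda_k\sim\sigma^k/\ell$ rather than your double-exponential $\lambda_n=\lambda_0^{b^n}$; the latter can be made to work but requires re-balancing the exponents, and the uncoupled choice $\ell_n=\lambda_n^{-c}$, $c\in(0,1)$, would not by itself tie the mollification scale to the defect size as is needed for the commutator bounds.
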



In this paper, we implement the proof of Theorem \ref{w2oi-hld} (for
$f$ and $v_0$ as in Theorem \ref{weakMA} and specially prepared $A$ and
$w_0$) into an explicit algorithm, leading to the visualizations of the anomalous
solutions to the Monge-Amp\`ere equation (\ref{MA}). These are
solutions failing to possess the indicated geometrical 
structural properties valid in the rigid regime $v\in W^{2,2}$ or
$v\in \mathcal{C}^{1,2/3+}$:
\begin{itemize}
\item nonconvex solutions, approximating a saddle $v_0(x,y) = x^2-y^2$,
  when $f\equiv 1$,
\item nondevelopable solutions, approximating a convex paraboloid $v_0
  (x,y) =  x^2+y^2$, when $f\equiv 0$,
\item solutions uniformly close to $v_0(x,y) = x^2+y^2$,  when $f\equiv -1$.
\end{itemize}
We remark that numerical implementation of convex
integration construction for the isometric immersion problem has been
reported in \cite{BJLT, BJLT2}, leading among others to the
remarkable images of flat torus embedded in $\R^3$.

\smallskip

The paper is organized as follows. In sections \ref{sec1}, \ref{sec4} and \ref{sec5}
we revisit the proof of Theorem \ref{w2oi-hld} and explicitly trace
all the constants appearing in the arguments in \cite{LP} for an optimal performance of the numerical
algorithm. Without loss of generality, we will assume that
$0\in\Omega$. In sections \ref{Numerical1} and \ref{num2_sec} we
implement the indicated technique and visualise the first few
approximations of anomalous solutions to (\ref{MA}) in case of $f$
being positive, negative and equivalently equal $0$, respectively.
The implementation is done in MATLAB, where for the detailed H\"older
continuous approximations in section \ref{num2_sec} we use of the Python package
mpmath \cite{mpmath}, allowing the user to define floating point
arithmetics up to an arbitrary precision. 

\bigskip

\noindent{\bf Acknowledgments.}
The authors have been partially supported by the NSF awards DMS-1406730 and DMS-1613153. 

\section{The $\mathcal{C}^1$ convergence and construction of corrugations}\label{sec1}

We start with a weaker version of Theorem \ref{w2oi-hld}. Define three unit vectors:
\begin{equation*}
 \eta_1= (1,0),\quad \eta_2= \frac{1}{\sqrt{5}}(1,2),\quad \eta_3= \frac{1}{\sqrt{5}}(1,-2).
\end{equation*}

\begin{theorem}\label{Approx2}
Let $\Omega\subset\R^2$ be an open bounded domain. Given $v_0\in \mathcal{C}^\infty
 ( \bar{\Omega}) $, $w_0\in \mathcal{C}^\infty (\bar{\Omega},\R^2)$
 and $A\in \mathcal{C}^\infty (\bar{\Omega},\R^{2\times2}_{sym})$, we have the following.
For any $\epsilon>0$ there exist $v\in \mathcal{C}^1( \bar{\Omega}) $
and $w\in \mathcal{C}^1 (\bar{\Omega},\R^2)$ that satisfy: 
\begin{equation}\label{result}
\|v-v_0\|_0 \leq \epsilon \quad \mbox{ and } \quad 
\frac{1}{2}\nabla v\otimes\nabla v + \mathrm{sym} \nabla w = A.
\end{equation}
Moreover, if with some smooth positive functions
$\phi_k\in\mathcal{C}^\infty(\bar\Omega)$ there holds: 
\begin{equation}\label{DefectBasis}
   A-\big( \frac{1}{2}\nabla v_0\otimes\nabla v_0 +
    \mathrm{sym} \nabla w_0\big) = \sum_{k=1}^3 \phi_k
  \eta_k\otimes\eta_k,\qquad \phi_k\geq d_0>0 \quad \mbox{ in }
  \bar{\Omega}.
\end{equation}
then it is possible to have the field $w$ in (\ref{result}) 
additionally satisfy: $\|w-w_0\|_{0}\leq \epsilon$.
The vector field $(v,w)$ is obtained as a
$\mathcal{C}^1(\bar\Omega)$-limit of smooth fields
$v_k\in\mathcal{C}^\infty(\bar\Omega)$ and
$w_k\in\mathcal{C}(\bar\Omega,\R^2)$ such that $\|v_k-v_0\|_0\leq
\epsilon$ and $\|w_k-w_0\|_0\leq \epsilon$ for all $k\geq 1$.
\end{theorem}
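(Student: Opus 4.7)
The plan is to follow the Nash-Kuiper convex integration scheme adapted to the Monge-Amp\`ere setting, organized into three nested layers. The innermost layer is a \emph{primitive corrugation}: given smooth $(v,w)$, a unit vector $\eta\in\{\eta_1,\eta_2,\eta_3\}$, a smooth positive amplitude $a$, and a large integer $\lambda$, one sets
\begin{equation*}
\tilde v = v + \frac{a(x)}{\lambda}\,\Gamma_1(\lambda\, x\cdot\eta),\qquad \tilde w = w - \frac{1}{\lambda}\Big(a(x)\,\Gamma_1(\lambda\, x\cdot\eta)\,\nabla v(x) \;+\; \tfrac{1}{2}a(x)^2\,\Gamma_2(\lambda\, x\cdot\eta)\,\eta\Big),
\end{equation*}
where $\Gamma_1,\Gamma_2\in\mathcal{C}^\infty(\R)$ are fixed $1$-periodic profiles chosen so that $(\Gamma_1')^2-\Gamma_2'\equiv 2$ together with the mean-zero conditions $\int_0^1\Gamma_1'=\int_0^1\Gamma_2'=0$. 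A direct expansion then shows that the linear cross term in $\Gamma_1'$ cancels between the two halves and the remaining quadratic terms combine via the identity $(\Gamma_1')^2-\Gamma_2'\equiv 2$ to give
\begin{equation*}
\tfrac{1}{2}\nabla\tilde v\otimes\nabla\tilde v + \mathrm{sym}\,\nabla\tilde w \;=\; \tfrac{1}{2}\nabla v\otimes\nabla v + \mathrm{sym}\,\nabla w + a^2\,\eta\otimes\eta + E, \qquad \|E\|_0 \leq C/\lambda,
\end{equation*}
together with $\|\tilde v-v\|_0+\|\tilde w-w\|_0\leq C/\lambda$ and $\|\nabla\tilde v-\nabla v\|_0\leq C\|a\|_0$ uniformly in $\lambda$; the last bound, uniform in $\lambda$, is what ultimately drives the $\mathcal{C}^1$ convergence.

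The middle layer assembles three primitive corrugations into a \emph{stage}. Given $(v_k,w_k)\in\mathcal{C}^\infty$ whose defect $D_k = A - \frac{1}{2}\nabla v_k\otimes\nabla v_k - \mathrm{sym}\,\nabla w_k$ admits a smooth strictly positive decomposition $D_k = \sum_{j=1}^3\phi_{k,j}\,\eta_j\otimes\eta_j$, one applies three corrugations in succession along $\eta_1,\eta_2,\eta_3$, with amplitudes determined at each substep to cancel the fraction $(1-\rho)$ of the current $\eta_j\otimes\eta_j$-coefficient for a fixed contraction $\rho\in(0,1)$, all at a common large frequency $\lambda_k$. The composed output $(v_{k+1},w_{k+1})$ then satisfies $D_{k+1} = \rho\sum_j\phi_{k,j}\,\eta_j\otimes\eta_j + O(\lambda_k^{-1})$ in $\mathcal{C}^0(\bar\Omega)$. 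Here one uses that $\{\eta_j\otimes\eta_j\}_{j=1}^3$ is a basis of $\R^{2\times 2}_{sym}$ (a short $3\times 3$ determinant check) and that $\mathrm{Id}_2$ lies in the interior of the positive cone they span. Choosing $\lambda_k$ large enough keeps the $O(\lambda_k^{-1})$ error below the positivity margin $\rho\min_j\phi_{k,j}$, so $D_{k+1}$ remains in that open cone and the linear recovery yields new strictly positive coefficients $\phi_{k+1,j}$; the iteration can therefore be continued. After iteration, $\|D_k\|_0\to 0$ geometrically, and the gradient increments $\|\nabla v_{k+1}-\nabla v_k\|_0\leq C\max_j\sqrt{\phi_{k,j}}\leq C'\rho^{k/2}$ are summable, so $(v_k,w_k)\to(v,w)$ in $\mathcal{C}^1(\bar\Omega)$ with $(v,w)$ solving (\ref{result}). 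Choosing each $\lambda_k$ still larger forces $\|v_{k+1}-v_k\|_0+\|w_{k+1}-w_k\|_0\leq 2^{-k}\epsilon$ and delivers the $\epsilon$-proximity, with hypothesis (\ref{DefectBasis}) specifically needed for the $w$-closeness since it allows starting the iteration at $(v_0,w_0)$ without any preliminary modification of $w_0$ to force the initial defect into the positive cone.

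The main difficulty lies not in the algebraic identity for the primitive corrugation, which is a direct computation, but in the coupled bookkeeping required to satisfy simultaneously three constraints: (i) keeping the defect inside the open cone spanned by $\{\eta_j\otimes\eta_j\}$ throughout the iteration, (ii) making the $\mathcal{C}^0$-displacements summable to at most $\epsilon$, and (iii) keeping the $\mathcal{C}^1$-increments summable for convergence to a $\mathcal{C}^1$ limit. Balancing these three constraints forces a specific growth schedule on the frequencies $\lambda_k$ depending on the $\mathcal{C}^2$-norms of the previous iterates, which themselves grow with $k$; quantifying this schedule with explicit constants is precisely the detailed task pursued in sections \ref{sec1}, \ref{sec4}, and \ref{sec5} of the paper, in preparation for the numerical implementation.
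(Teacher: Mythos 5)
Your proposal is correct and follows essentially the same route as the paper. Your primitive corrugation is, up to rescaling of the profiles, exactly the paper's (taking $\Gamma_1(t)=\frac{1}{\pi}\sin(2\pi t)$ and $\Gamma_2(t)=\frac{1}{2\pi}\sin(4\pi t)$ reproduces the $V,W$ of Lemma \ref{VW} and Proposition \ref{OneStep}, and the identity $(\Gamma_1')^2-\Gamma_2'\equiv 2$ is the rescaled form of \eqref{DiffRelation}); the three-direction stage with the fixed contraction $\rho$ matches Proposition \ref{Stage2}, the $\lambda$-uniform $\mathcal{C}^1$-increment bound $\lesssim\sqrt{\phi_{k,j}}$ is the engine of convergence exactly as in \eqref{tilded3}, and the preliminary linear modification of $w_0$ to push the defect into the cone spanned by $\{\eta_j\otimes\eta_j\}$ (when \eqref{DefectBasis} fails) is Step 2 of the paper's proof via Lemma \ref{CoeffFormula}~(iv).
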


As noted below, condition (\ref{DefectBasis}) implies
positive definiteness of the left hand side:
$D_0\doteq A-\big( \frac{1}{2}\nabla v_0\otimes\nabla v_0 +
    \mathrm{sym} \nabla w_0\big)
\geq d_0 \mathrm{Id}_2$ in $\bar\Omega$. We now recall the convex
integration construction which allows to decrease such positive definite defect, 
in each of the $\eta_k\otimes \eta_k$  directions, through oscillatory
perturbations in $v$ and $w$.
Let $\eta$ be a unit vector and $\phi$ a smooth positive function on $\bar\Omega$.
Given two fields $v$ and $w$, we want to adjust them  to
$\tilde{v}$ and $\tilde{w}$ so that:
\begin{equation}\label{ErrAlongEta}
\Big(A- \big(\frac{1}{2}\nabla \tilde{v}\otimes\nabla \tilde{v} + \mathrm{sym}
 \nabla \tilde{w}\big)\Big) - \Big(A-\big(\frac{1}{2}\nabla v\otimes\nabla v + \mathrm{sym}
 \nabla w + \phi \eta\otimes\eta\big)\Big) \sim 0.
\end{equation}
Firstly, define $\tilde{v} \doteq v+\frac{1}{\lambda}f(x,\lambda
x\cdot\eta)$ by adding oscillations of amplitude $\frac{1}{\lambda}$ and
frequency $\lambda$, with $\lambda\to\infty$ in the ultimate limiting process. Secondly,
decompose perturbation of $w$ in the two directions $\nabla v$ and $\eta$, and write 
$\tilde{w} \doteq w + \frac{1}{\lambda}g(x,\lambda x\cdot\eta)\nabla v +
\frac{1}{\lambda}h(x,\lambda x\cdot\eta)\eta$.  
Each of $f(x,t)$, $g(x,t)$, and $h(x,t)$ should be
bounded and $1$-periodic in the variable $t$. The expansion:
\begin{equation*}
\begin{split}
\frac{1}{2}\nabla \tilde{v}\otimes\nabla \tilde{v} + \mathrm{sym} \nabla \tilde{w} = 
 & \frac{1}{2}\nabla v\otimes\nabla v + \frac{1}{2}(\partial_t
 f)^2\eta\otimes\eta + (\partial_t f)\mathrm{sym} (\eta\otimes\nabla
 v) \\ & + \mathrm{sym} \nabla w + (\partial_t g)\mathrm{sym}
 (\eta\otimes\nabla v) + (\partial_t h) \eta\otimes\eta + O\big(\frac{1}{\lambda}\big),
\end{split}
\end{equation*}
 yields  (\ref{ErrAlongEta}) provided that $f=-g$ and $\frac{1}{2}(\partial_t f)^2
+ \partial_t h = \phi $. The actual choice of $f,g$ and $h$ is given in
Lemma \ref{VW}.

\medskip

In the subsequent proof of Theorem \ref{Approx2}, we will aim at keeping the
frequencies $\lambda=\lambda_k$ minimal, facilitating a good numerical implementation. 
Three modifications with respect to proofs in \cite{LP} will be done
in order to improve the estimates on $\lambda_k$: 
\begin{itemize}
\item[(i)] all the global inequalities will be specified to pointwise estimates;
\item[(ii)] we will allow the error threshold parameter $\delta$ to depend on
$x$ rather than be constant; 
\item[(iii)] we will set distinct rates of convergence for the
errors $D_k$ and the approximations $v_k$ in Proposition \ref{Stage2}.
\end{itemize}

\bigskip

We first make the following simple observation:

\begin{lemma}\label{VW}
For a positive function $a\in \mathcal{C}^\infty(\bar{\Omega})$, the functions $V, W\in
{\mathcal{C}}^\infty(\bar\Omega\times \mathbb{R})$ defined by:
\begin{equation*}
V(x,t) \doteq \frac{a(x)}{\pi}\sin(2\pi t),\qquad
W(x,t) \doteq -\frac{a(x)^2}{4\pi}\sin(4\pi t),
\end{equation*}
are $1$-periodic in $t$, and they satisfy in $\bar\Omega\times \mathbb{R}$:
\begin{equation}\label{DiffRelation}
\frac{1}{2}(\partial_t V)^2 + \partial_t W = a^2,
\end{equation}
\begin{equation}\label{BoundsOscillations}
\begin{split} 
& |V|  \leq \frac{a}{\pi}, \quad  |\partial_t V| \leq 2a, \quad
|\nabla_x V| \leq \frac{|\nabla a|}{\pi}, \quad |\nabla_x^2 V| \leq
\frac{|\nabla^2 a|}{\pi}, \\
& |W|  \leq \frac{a^2}{4\pi}, \quad  |\partial_t W|\leq a^2, \quad
|\nabla_x W| \leq \frac{a |\nabla a|}{2\pi}.
\end{split} 
\end{equation}
\end{lemma}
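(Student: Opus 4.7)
The proof is essentially a direct verification, so my plan is to treat each of the three claims (periodicity, the differential relation \eqref{DiffRelation}, and the bounds \eqref{BoundsOscillations}) in turn as routine computations, highlighting only the one nontrivial algebraic identity that guides the choice of $W$.

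First, $1$-periodicity in $t$ is immediate since both $\sin(2\pi t)$ and $\sin(4\pi t)$ have period $1$, and $a(x)$ does not depend on $t$. Smoothness of $V$ and $W$ on $\bar\Omega\times\R$ is inherited from $a\in\mathcal{C}^\infty(\bar\Omega)$ together with the smoothness of $\sin$.

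For the differential relation, I would compute $\partial_t V(x,t) = 2a(x)\cos(2\pi t)$ and $\partial_t W(x,t) = -a(x)^2\cos(4\pi t)$. The single point where a choice has been made is in matching oscillatory terms: using the double-angle identity $2\cos^2\theta = 1+\cos(2\theta)$ with $\theta = 2\pi t$, one gets
\begin{equation*}
\tfrac{1}{2}(\partial_t V)^2 = 2a(x)^2\cos^2(2\pi t) = a(x)^2\bigl(1+\cos(4\pi t)\bigr),
\end{equation*}
and adding $\partial_t W$ cancels the $\cos(4\pi t)$ term, leaving $a(x)^2$. This is precisely the reason the amplitude $a^2/(4\pi)$ and the frequency $4\pi$ appear in the definition of $W$: to absorb the oscillatory remainder produced by squaring $\partial_t V$.

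For the bounds in \eqref{BoundsOscillations}, I would just apply $|\sin|,|\cos|\le 1$ pointwise and differentiate under the $\sin$ in the $x$-variable. Explicitly, $\nabla_x V = \pi^{-1}(\nabla a)\sin(2\pi t)$ and $\nabla_x^2 V = \pi^{-1}(\nabla^2 a)\sin(2\pi t)$ give the two estimates on $V$; for $W$, the chain rule produces $\nabla_x W = -\tfrac{1}{2\pi}a(\nabla a)\sin(4\pi t)$, yielding the factor $1/(2\pi)$ in the last bound. Each inequality is then one application of $|\sin|\le 1$ or $|\cos|\le 1$. There is no real obstacle in this lemma; its content is the algebraic identity in the previous paragraph, and the bounds are recorded in their sharp pointwise form precisely so that modification (i) announced before the statement can be used later to minimize the frequencies $\lambda_k$.
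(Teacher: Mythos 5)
Your verification is correct, and it is essentially the argument the paper intends; the paper states Lemma \ref{VW} as a ``simple observation'' and omits the proof entirely, since every claim follows from the double-angle identity and $|\sin|,|\cos|\le 1$ exactly as you lay out.
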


We then obtain, consistently with (\ref{ErrAlongEta}):

\begin{proposition}\label{OneStep}
Let $v\in \mathcal{C}^\infty ( \bar{\Omega})$, $w\in \mathcal{C}^\infty
(\bar{\Omega},\R^2)$ and let $a\in \mathcal{C}^\infty (\bar{\Omega})$ be a
positive function. For a unit vector $\eta\in\R^2$ and a frequency $\lambda>0$,
define $v_\lambda\in \mathcal{C}^\infty ( \bar{\Omega},\R)$, 
$w_\lambda \in \mathcal{C}^\infty (\bar{\Omega},\R^2)$ through:
\begin{equation}\label{zzz}
\begin{split}
  v_\lambda(x) & \doteq v(x) + \frac{1}{\lambda}V(x,\lambda x\cdot\eta),\\
  w_\lambda(x) & \doteq w(x) -
  \frac{1}{\lambda}V(x,\lambda  x\cdot\eta)\nabla v(x) +
  \frac{1}{\lambda}W(x,\lambda x\cdot\eta)\eta.
 \end{split} 
\end{equation}
Then we have the following pointwise estimates, valid in $\bar\Omega$:
\begin{equation}\label{ErrEstEta}
\begin{split}  
\Big|\big(\frac{1}{2}\nabla v_\lambda\otimes\nabla v_\lambda + \mathrm{sym} \nabla
  w_\lambda\big) & -\big(\frac{1}{2}\nabla v\otimes\nabla v + \mathrm{sym} \nabla w + a^2
  \eta\otimes\eta\big) \Big|\\ 
& \quad \leq  \frac{1}{\lambda}\big(\frac{a |\nabla a|}{2\pi}
    +\frac{ a |\nabla^2 v|}{\pi} \big) + \frac{1}{2\lambda^2\pi^2}  |\nabla a|^2,
\end{split}
\end{equation}
\begin{align}
&  |v_\lambda - v| \leq \frac{a}{\lambda\pi}, \qquad
  |w_\lambda - w| \leq \frac{a}{\lambda\pi}\big(|\nabla  v|
    +\frac{a}{4}\big), \label{ErrVWlambda} \\
& |\nabla v_\lambda - \nabla v| \leq \frac{|\nabla a|}{\lambda\pi} +
2a, \label{ErrGrad}\\
&   \mbox{and } ~~~ |\nabla w_\lambda - \nabla w| \leq 2a|\nabla v| + a^2
   +\frac{1}{\lambda}\big( \frac{1}{\pi}|\nabla v||\nabla a| +
     \frac{a}{\pi}|\nabla^2v| +\frac{1}{2\pi} a|\nabla a| \big),\nonumber \\
&|\nabla^2 v_\lambda - \nabla^2 v| \leq \frac{|\nabla^2 a|}{\lambda\pi} + 4|\nabla a| + 4\lambda\pi a. \label{1.10}
\end{align}
\end{proposition}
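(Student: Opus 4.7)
The plan is to compute every quantity by explicit chain rule expansion of $\tilde V(x) := V(x, \lambda x \cdot \eta)$ and $\tilde W(x) := W(x, \lambda x \cdot \eta)$, and then invoke the pointwise bounds of Lemma \ref{VW}. The chain rule gives $\nabla \tilde V = \nabla_x V + \lambda(\partial_t V)\eta$ and an analogous formula for $\nabla \tilde W$. The estimates (\ref{ErrVWlambda}) on $|v_\lambda - v|$ and $|w_\lambda - w|$ are then immediate from $|\tilde V|\leq a/\pi$ and $|\tilde W|\leq a^2/(4\pi)$, and (\ref{ErrGrad}) follows from $\nabla v_\lambda - \nabla v = (\partial_t V)\eta + \frac{1}{\lambda}\nabla_x V$ together with $|\partial_t V|\leq 2a$, $|\nabla_x V|\leq |\nabla a|/\pi$.

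For the central estimate (\ref{ErrEstEta}), I would apply the product rule to $w_\lambda$ to obtain
$$\text{sym}\,\nabla w_\lambda = \text{sym}\,\nabla w - (\partial_t V)\text{sym}(\nabla v\otimes\eta) - \tfrac{1}{\lambda}\text{sym}(\nabla v\otimes\nabla_x V) - \tfrac{1}{\lambda}\tilde V\nabla^2 v + \tfrac{1}{\lambda}\text{sym}(\eta\otimes\nabla_x W) + (\partial_t W)\eta\otimes\eta,$$
and expand $\tfrac{1}{2}\nabla v_\lambda\otimes\nabla v_\lambda$ by writing $\nabla v_\lambda = \nabla v + (\partial_t V)\eta + \tfrac{1}{\lambda}\nabla_x V$. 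Upon summing, three structural cancellations occur: the cross terms $(\partial_t V)\text{sym}(\nabla v\otimes\eta)$ cancel, the cross terms $\tfrac{1}{\lambda}\text{sym}(\nabla v\otimes\nabla_x V)$ cancel, and the differential relation (\ref{DiffRelation}) collapses $\tfrac{1}{2}(\partial_t V)^2 + \partial_t W$ into $a^2$, producing the targeted $a^2\eta\otimes\eta$. Four residual terms survive.

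The main obstacle, and the reason the bound (\ref{ErrEstEta}) is sharper than a naive estimation would yield, lies in the two residuals $\tfrac{1}{\lambda}(\partial_t V)\text{sym}(\nabla_x V\otimes\eta)$ and $\tfrac{1}{\lambda}\text{sym}(\eta\otimes\nabla_x W)$: bounding them separately via Lemma \ref{VW} yields $\tfrac{2a|\nabla a|}{\lambda\pi}+\tfrac{a|\nabla a|}{2\lambda\pi}$, which is too large. The fix is to combine them into $\tfrac{1}{\lambda}\text{sym}\bigl([(\partial_t V)\nabla_x V + \nabla_x W]\otimes\eta\bigr)$ and then use the explicit formulas from Lemma \ref{VW}. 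A direct computation gives
$$(\partial_t V)\nabla_x V + \nabla_x W = \frac{a\nabla a}{\pi}\sin(4\pi t) - \frac{a\nabla a}{2\pi}\sin(4\pi t) = \frac{a\nabla a}{2\pi}\sin(4\pi t),$$
so the combined contribution is bounded by $\tfrac{a|\nabla a|}{2\lambda\pi}$. Together with the trivial estimates on the two remaining residuals $\tfrac{1}{2\lambda^2}\nabla_x V\otimes\nabla_x V$ and $-\tfrac{1}{\lambda}\tilde V\nabla^2 v$, this gives (\ref{ErrEstEta}).

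The estimate on $\nabla w_\lambda - \nabla w$ is read directly off the formula for $\nabla w_\lambda$ derived above, bounding each summand via Lemma \ref{VW}. For (\ref{1.10}), I would differentiate $\nabla\tilde V$ once more via the chain rule, producing the four-term expansion $\nabla^2\tilde V = \nabla_x^2 V + \lambda\eta\otimes\nabla_x(\partial_t V) + \lambda\nabla_x(\partial_t V)\otimes\eta + \lambda^2(\partial_t^2 V)\eta\otimes\eta$. Scaling by $1/\lambda$ and applying the bounds $|\nabla_x^2 V|\leq |\nabla^2 a|/\pi$, $|\nabla_x(\partial_t V)|\leq 2|\nabla a|$, and $|\partial_t^2 V|\leq 4\pi a$ then gives the stated bound.
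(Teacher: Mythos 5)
Your proposal is correct and follows essentially the same route as the paper: direct chain-rule expansion, the expansion (\ref{ErrCalc}) with its three cancellations, and the key step of combining $(\partial_t V)\nabla_x V + \nabla_x W$ into the single term $\frac{a\nabla a}{2\pi}\sin(4\pi t)$ before estimating (rather than bounding the two pieces separately), exactly as the paper does. Your arithmetic and the residual bounds match the paper's line by line.
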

\begin{proof}
To show \eqref{ErrVWlambda}, we use \eqref{BoundsOscillations} to estimate:
\begin{equation*}
|v_\lambda - v| = |\frac{1}{\lambda}V|\leq\frac{a}{\lambda\pi}, \qquad 
|w_\lambda - w| = \frac{1}{\lambda} |V\nabla v - W\eta| \leq
\frac{a}{\lambda\pi}\big(|\nabla v|+\frac{a}{4}\big).
\end{equation*}
Similarly, \eqref{ErrGrad} and (\ref{1.10}) follow, in view of
$|a\otimes b|= |a| \cdot |b|$:
\begin{equation*}
\begin{split}
&  |\nabla v_\lambda - \nabla v|  \leq \frac{1}{\lambda}|\nabla_xV| +
|\partial_tV| \leq \frac{|\nabla a|}{\lambda\pi} + 2a,\\ 
&  |\nabla w_\lambda - \nabla w|  = \Big|\frac{1}{\lambda}\nabla
    v\otimes\nabla_xV + (\partial_tV) \nabla v\otimes\eta  +
    \frac{1}{\lambda} V \nabla^2 v -  
  \frac{1}{\lambda} \eta\otimes\nabla_x W - (\partial_t W)\eta\otimes\eta\Big|\\
& \qquad\qquad \quad \; \leq 2a|\nabla v| + a^2 +\frac{1}{\lambda}\big(
    \frac{1}{\pi}|\nabla v||\nabla a| + \frac{a}{\pi}|\nabla^2v|
    +\frac{1}{2\pi} a|\nabla a| \big), \\ 
& |\nabla^2 v_\lambda - \nabla^2 v| \leq \frac{1}{\lambda} |\nabla^2_x
V| + 2|\nabla_x\partial_t V| + \lambda|\partial_t^2 V| 
\leq \frac{|\nabla^2 a|}{\lambda\pi} + 4|\nabla a| + 4\lambda\pi a.
\end{split} 
\end{equation*}
Lastly, we observe:
\begin{equation}\label{ErrCalc}
\begin{split}
\frac{1}{2}\nabla v_\lambda\otimes\nabla v_\lambda  + \mathrm{sym}
\nabla w_\lambda = & ~
  \frac{1}{2}\nabla v\otimes\nabla v + \frac{1}{2}(\partial_t
  V)^2\eta\otimes\eta + (\partial_t W) \eta\otimes \eta +  \mathrm{sym} \nabla w \\
&  + \frac{1}{\lambda}\big( (\partial_tV)
    \mathrm{sym} (\nabla_xV\otimes\eta)-V\nabla^2v+\mathrm{sym}
    (\nabla_xW\otimes\eta)\big) \\ & + 
  \frac{1}{2\lambda^2}\nabla_xV\otimes\nabla_xV.
\end{split}
\end{equation}
By\eqref{DiffRelation} and 
$(\partial_t V) (\nabla _xV) + \nabla_xW = \frac{1}{2\pi}\sin (4\pi t)
a\nabla a$, we arrive at:
\begin{equation*}
\begin{split}
\Big |\big(\frac{1}{2} \nabla v_\lambda\otimes&\nabla v_\lambda + \mathrm{sym} \nabla
  w_\lambda\big) - \big(\frac{1}{2}\nabla v\otimes\nabla v + \mathrm{sym} \nabla w + a^2
  \eta\otimes\eta\big) \Big| \\
& \leq\frac{1}{\lambda} \Big| 
    \mathrm{sym} \Big(\big((\partial_t V) (\nabla_xV) + \nabla_x
    W\big)\otimes\eta\Big) - V\nabla^2v \Big | + 
  \frac{1}{2\lambda^2} \Big |\nabla_xV \otimes\nabla_xV \Big| \\
& \leq  \frac{1}{\lambda}\big( \frac{a |\nabla a| }{2\pi} + \frac{a |\nabla^2 v|}{\pi} \big) +
  \frac{1}{2\lambda^2\pi^2}|\nabla a|^2,
\end{split}
\end{equation*}
concluding the proof of (\ref{ErrEstEta}).
\end{proof}

\begin{lemma} \label{CoeffFormula}
Matrices $\{\eta_k\otimes\eta_k\}_{k=1}^3$ form a basis of
$\R_{sym}^{2\times 2}$. If $B=[b_{ij}]_{i,j=1, 2} =
\sum_{k=1}^3 \phi_k \eta_k\otimes\eta_k$, then:
\begin{itemize}
\item[(i)] $ \phi_1 = b_{11}-\frac{1}{4} b_{22}$, $\phi_2 =
\frac{5}{8}(b_{22}+2b_{12})$, $\phi_3 = \frac{5}{8}(b_{22}-2b_{12})$.
\item[(ii)]  $\sum_{k=1}^3 \phi_k=
\mathrm{Tr} \,B$ and $|\phi_k|\leq \frac{5\sqrt{3}}{8}|B|$ for all
$k=1\ldots 3$.
\item[(iii)] If $\phi_k\geq d>0$ for all $k=1\ldots 3$, then $B\geq d
  \;\mathrm{Id}_2$.
\item[(iv)] Let $\tilde B = B + \alpha \,\mbox{diag}\big\{\frac{\sqrt{2}+9}{4},
  (\sqrt{2}+\frac{9}{5})\big\}$, with $\alpha\geq |B|$. Then:
  $|\tilde B|\leq 5.15\cdot \alpha$ and $\tilde B=\sum_{k=1}^3 
\tilde \phi_k\eta_k\otimes \eta_k$ with  $\tilde \phi_k\geq\frac{\alpha}{2}$.
\end{itemize}
\end{lemma}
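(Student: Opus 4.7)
The four parts all reduce to direct computation once the three rank-one matrices are written out explicitly. I would first record
\[
\eta_1\otimes\eta_1=\begin{pmatrix}1&0\\0&0\end{pmatrix},\qquad
\eta_2\otimes\eta_2=\frac{1}{5}\begin{pmatrix}1&2\\2&4\end{pmatrix},\qquad
\eta_3\otimes\eta_3=\frac{1}{5}\begin{pmatrix}1&-2\\-2&4\end{pmatrix},
\]
so that the identity $B=\sum_k\phi_k\eta_k\otimes\eta_k$, read off in entries, becomes the linear system $b_{11}=\phi_1+\tfrac{1}{5}(\phi_2+\phi_3)$, $b_{12}=\tfrac{2}{5}(\phi_2-\phi_3)$, $b_{22}=\tfrac{4}{5}(\phi_2+\phi_3)$. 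Solving the last two for $\phi_2\pm\phi_3$ and back-substituting into the first yields the three formulas in (i); the unique solvability of this system for any right-hand side simultaneously gives the basis property.

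To obtain (ii) I would add the three identities in (i), which collapses the $\phi_2,\phi_3$ contributions and yields $\sum_k\phi_k=b_{11}+b_{22}=\mathrm{Tr}\,B$. For the pointwise bound $|\phi_k|\leq\tfrac{5\sqrt{3}}{8}|B|$ in Frobenius norm, I would apply Cauchy--Schwarz to each line of (i). The worst constant comes from $|\phi_2|=\tfrac{5}{8}|b_{22}+2b_{12}|\leq\tfrac{5}{8}\sqrt{3}\,\sqrt{b_{22}^2+2b_{12}^2}\leq\tfrac{5\sqrt{3}}{8}|B|$ and the identical estimate for $|\phi_3|$; the estimate for $|\phi_1|\leq\tfrac{\sqrt{17}}{4}|B|$ is then verified to sit below this constant.

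Part (iii) I would prove without returning to the coordinate formulas: since each $\phi_k\geq d$ and $\eta_k\otimes\eta_k$ is positive semidefinite, one has $B\geq d\sum_k\eta_k\otimes\eta_k$ in the order of symmetric matrices, and a direct evaluation gives $\sum_{k=1}^3\eta_k\otimes\eta_k=\mathrm{diag}(7/5,8/5)\geq\mathrm{Id}_2$, which closes (iii).

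For (iv), I would apply (i) to the matrix $\tilde B$: since the added shift is diagonal, the off-diagonal entries are unchanged, and $\tilde\phi_k=\phi_k+\alpha c_k$ with
\[
c_1=\tfrac{\sqrt{2}+9}{4}-\tfrac{1}{4}\bigl(\sqrt{2}+\tfrac{9}{5}\bigr)=\tfrac{9}{5},\qquad c_2=c_3=\tfrac{5}{8}\bigl(\sqrt{2}+\tfrac{9}{5}\bigr)=\tfrac{5\sqrt{2}+9}{8}.
\]
Combined with (ii) and $|B|\leq\alpha$ this gives $\tilde\phi_k\geq\alpha\bigl(c_k-\tfrac{5\sqrt{3}}{8}\bigr)$, so the lower bound $\tilde\phi_k\geq\alpha/2$ reduces to the two numerical inequalities $\tfrac{9}{5}\geq\tfrac{1}{2}+\tfrac{5\sqrt{3}}{8}$ and $\tfrac{5\sqrt{2}+9}{8}\geq\tfrac{1}{2}+\tfrac{5\sqrt{3}}{8}$, both of which hold. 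The Frobenius bound $|\tilde B|\leq 5.15\,\alpha$ follows from the triangle inequality together with the explicit evaluation of $\sqrt{(\tfrac{\sqrt{2}+9}{4})^2+(\sqrt{2}+\tfrac{9}{5})^2}\leq 4.15$. No conceptual difficulty arises anywhere in the lemma; the only obstacle is booking these numerical bounds carefully, which is why the specific diagonal shift in (iv) is chosen as it is.
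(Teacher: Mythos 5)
Your proof is correct and follows essentially the same route as the paper: explicit matrix identities for (i)--(iii), Cauchy--Schwarz for the coefficient bound, and a direct coefficient shift computation for (iv). The only cosmetic difference is that in (iv) you invoke the uniform bound $|\phi_k|\leq\frac{5\sqrt{3}}{8}|B|$ from (ii), whereas the paper bounds the entries $b_{ij}$ individually; both give the required margin over $\alpha/2$.
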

\begin{proof}
The three indicated rank-one matrices are linearly independent by a
straightforward calculation. The formula in (i) follows directly as well and implies, in view of
the Cauchy-Schwartz inequa\-li\-ty: $|\phi_1|\leq
\frac{\sqrt{17}}{4}|B|$. In the same manner, we have:
$$|\phi_2|\leq \frac{5}{8}|b_{22} + b_{12} + b_{21}| \leq
\frac{5\sqrt{3}}{8}\big(b_{22}^2 + b_{12}^2 + b_{21}^2\big)^{1/2} \leq \frac{5\sqrt{3}}{8}|B|,$$ 
and likewise $|\phi_3|\leq \frac{5\sqrt{3}}{8}|B|$, so (ii) follows. For (iii), observe that
$\sum_{k=1}^3 \phi_k \eta_k\otimes\eta_k = \sum_{k=1}^3 (\phi_k-d)
\eta_k\otimes\eta_k + d\;\mbox{diag}\big\{\frac{7}{5}, \frac{8}{5}\big\} \geq 
d\;\mathrm{Id}_2$. In the setting of (iv), since  $b_{kk}\leq |B|$ for $k=1,2$ and
$b_{12}\leq\frac{\sqrt{2}}{2}|B|$, we get:
\begin{equation*}
\begin{split}
& \tilde \phi_1 = b_{11} + \frac{\sqrt{2} + 9}{4} |B| - \frac{1}{4}\big(b_{22} +
(\sqrt{2} + \frac{9}{5})\big) |B|\geq -|B| - \frac{1}{4}|B| + \frac{9}{5}|B|\geq\frac{1}{2}|B|,\\
& \tilde \phi_{2,3} = \frac{5}{8}\big(b_{22} + (\sqrt{2} + \frac{9}{5})|B|
\pm 2b_{12}\big)\geq \frac{1}{2}|B|.
\end{split}
\end{equation*}
Finally: $|\tilde B|\leq |B|+ \big(\big(\frac{\sqrt{2} + 9}{4}\big)^2 +
\big(\sqrt{2} +\frac{9}{5}\big)^2\big)^{1/2} |B|\leq 5.15\cdot |B|$.
\end{proof}

\begin{proposition}\label{Stage2}
Let $v\in \mathcal{C}^\infty ( \bar{\Omega}) $, $w\in \mathcal{C}^\infty
(\bar{\Omega},\R^2)$ and $A\in \mathcal{C}^\infty (\bar{\Omega},\R^{2\times2}_{sym})$ satisfy:
\begin{equation}\label{d}
D \doteq A - \big(\frac{1}{2}\nabla v\otimes\nabla v +\mathrm{sym}  \nabla w\big) =
\sum_{k=1}^3 \phi_k \eta_k\otimes\eta_k,\qquad \phi_k\geq d\quad \mbox{ in } \bar{\Omega},
\end{equation}
with some $\phi_k = \phi_k(x)$ and a constant $d>0$. Let $\xi>0$ be such that:
\begin{equation}\label{condixi}
\xi\leq  \min_{x\in\bar\Omega} |D(x)|.
\end{equation}
Fix $\epsilon>0$; then there exist $\tilde{v}\in \mathcal{C}^\infty (
\bar{\Omega})$, $\tilde{w}\in \mathcal{C}^\infty 
(\bar{\Omega},\R^2)$ and a constant $\tilde{d}>0$ such that:
\begin{align}
& \tilde D \doteq  A - \big(\frac{1}{2}\nabla \tilde v\otimes\nabla \tilde v +
\mathrm{sym}  \nabla \tilde w\big) = \sum_{k=1}^3 \tilde{\phi}_k
\eta_k\otimes\eta_k,\qquad \tilde{\phi}_k \geq \tilde d\quad \mbox{ in }
\bar{\Omega}, \label{tilded1}\\
& \|\tilde{D}\|_0\leq \frac{3}{4}\xi,\qquad \|\tilde{v}-v\|_0 \leq \epsilon, \qquad
\|\tilde{w}-w\|_0 \leq C\epsilon \big(\|\nabla
v\|_0+\|D\|_0^{1/2}\big), \label{tilded2} \\
& \|\nabla\tilde{v}-\nabla v\|_0 \leq C\|D\|_0^{1/2},\qquad
\|\nabla\tilde{w}-\nabla w\|_0 \leq C\big(\|D\|_0^{1/2}\|\nabla v\|_0+\|D\|_0\big), \label{tilded3}
\end{align}
where $C$ is a universal constant.
\end{proposition}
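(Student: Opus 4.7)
The plan is to apply Proposition \ref{OneStep} three times in succession, once for each direction $\eta_k$, each stage chipping away almost the entire rank-one contribution $\phi_k\eta_k\otimes\eta_k$ of the defect. For positive pointwise shifts $s_k$ to be chosen below, set amplitudes $a_k(x)\doteq (\phi_k(x)-s_k(x))^{1/2}$; starting from $(v_0,w_0)\doteq(v,w)$, apply Proposition \ref{OneStep} in sequence with data $(\eta_k,a_k,\lambda_k)$ to produce $(v_k,w_k)$, and set $(\tilde v,\tilde w)\doteq(v_3,w_3)$. Iterating the pointwise identity (\ref{ErrCalc}) three times, the final defect takes the form
\[
\tilde D = \sum_{k=1}^3 s_k\,\eta_k\otimes\eta_k + E,
\]
where the cumulative error $E$ is controlled pointwise via (\ref{ErrEstEta}) by the sum of the three $O(1/\lambda_k)+O(1/\lambda_k^2)$ terms.

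To secure the structural decomposition (\ref{tilded1}) and the smallness $\|\tilde D\|_0\leq\tfrac{3}{4}\xi$, I would take $s_k\equiv\beta$ constant, with $\beta\in(0,d)$ small enough that $3\beta\leq\tfrac{1}{2}\xi$ — for instance $\beta=\min\{d/2,\xi/6\}$. The basis coefficients of $\tilde D$ in $\{\eta_k\otimes\eta_k\}$ are then $\tilde\phi_k=\beta+\epsilon_k$ with $|\epsilon_k|\leq\tfrac{5\sqrt 3}{8}|E|$ by Lemma \ref{CoeffFormula}(ii). Choosing each $\lambda_k$ large enough that $\|E\|_0\leq\tfrac{4\beta}{5\sqrt 3}$ secures $\tilde\phi_k\geq\tilde d\doteq\beta/2>0$ and simultaneously $\|\tilde D\|_0\leq 3\beta+\|E\|_0\leq\tfrac{3}{4}\xi$, giving (\ref{tilded1}) and the first bound in (\ref{tilded2}).

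The remaining estimates (\ref{tilded2})--(\ref{tilded3}) follow by telescoping the pointwise bounds (\ref{ErrVWlambda})--(\ref{1.10}) over the three stages. Using $a_k\leq\|\phi_k\|_0^{1/2}\leq C\|D\|_0^{1/2}$ from Lemma \ref{CoeffFormula}(ii), the leading terms $2a_k$ in (\ref{ErrGrad}) and $2a_k|\nabla v_{k-1}|+a_k^2$ in the $|\nabla w_\lambda-\nabla w|$ expansion translate into the claimed $C\|D\|_0^{1/2}$ and $C(\|D\|_0^{1/2}\|\nabla v\|_0+\|D\|_0)$ dependencies, after inductively controlling $|\nabla v_{k-1}|\leq\|\nabla v\|_0+C\|D\|_0^{1/2}$ via the previously-established gradient bound. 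The $C^0$ estimates $\|\tilde v-v\|_0\leq\epsilon$ and $\|\tilde w-w\|_0\leq C\epsilon(\|\nabla v\|_0+\|D\|_0^{1/2})$ are achieved by taking each $\lambda_k$ large enough that $a_k/(\lambda_k\pi)$ contributes at most $\epsilon/3$ to the relevant sum.

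The main technical point is the cascade of frequencies: at stage $k$, the error bound (\ref{ErrEstEta}) involves $|\nabla^2 v_{k-1}|$, which by (\ref{1.10}) can be as large as $O(\lambda_{k-1})$. Thus $\lambda_k$ must be fixed only after $\lambda_{k-1}$ and chosen much larger, producing an iteratively defined sequence $\lambda_1\ll\lambda_2\ll\lambda_3$. Since all intermediate fields are smooth, each $\lambda_k$ may be taken finite, so this is bookkeeping rather than an essential obstacle, but it is the delicate part of the argument and is precisely where the author's modifications (i)--(iii) (pointwise estimates, $x$-dependent thresholds, distinct convergence rates) will pay off for the numerical implementation ahead.
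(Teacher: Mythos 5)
Your proposal is correct and matches the overall skeleton of the paper's proof: three successive applications of Proposition~\ref{OneStep} in the directions $\eta_1,\eta_2,\eta_3$, amplitudes $a_k$ chosen to eat almost all of $\phi_k$, residual plus error bounded via Lemma~\ref{CoeffFormula}, cascaded choice of $\lambda_1\ll\lambda_2\ll\lambda_3$ driven by the $|\nabla^2 v_{k-1}|\sim\lambda_{k-1}$ growth in \eqref{ErrEstEta}. The one genuine difference is the shape of the residual. You take $a_k^2=\phi_k-\beta$ with $\beta$ a single \emph{constant}, so the residual is $\beta\sum_k\eta_k\otimes\eta_k=\beta\,\mathrm{diag}(\tfrac{7}{5},\tfrac{8}{5})$, a fixed matrix. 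The paper instead takes $a_k^2=(1-\delta(x))\phi_k(x)$ with $\delta(x)|D(x)|=\tfrac{\xi}{2}$, so the residual $\delta(x)D(x)$ has constant Frobenius norm $\xi/2$ but $x$-dependent coefficients that track the local shape of $D$. Both choices are admissible and both clear the stated inequalities (your bookkeeping gives $\|\tilde D\|_0\lesssim 2.6\beta\le\tfrac{3}{4}\xi$, $\tilde\phi_k\ge\beta/2$, etc.), so your proof of the proposition as stated is fine. What the paper's $x$-dependent $\delta$ buys --- and this is explicitly listed as modification~(ii) at the start of Section~\ref{sec1} --- is sharper control of the admissible frequencies $\lambda_k$: where $|D(x)|$ is large, $\delta(x)$ is small and the amplitude $a_k$ stays large, so the pointwise constraints \eqref{tre} are less demanding there. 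Your constant $\beta\le\min\{d/2,\xi/6\}$ is governed by the global worst case, which is harmless for the qualitative statement but leads to larger $\lambda_k$'s than the paper's choice, and $\lambda_k$-minimality is precisely what the authors are optimizing for downstream in the numerics. You flag this tension yourself in your last paragraph; the only thing to add is that the paper's fix is to make $\delta$ pointwise rather than constant, not merely to track pointwise estimates.
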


\begin{proof}
{\bf 1.}  We construct the intermediate fields $\{(v_k, w_k)\}_{k=1\ldots
3}$, from the given  $(v_0, w_0) \doteq (v, w)$  to the requested
$(\tilde{v}, \tilde{w})\doteq (v_3, w_3)$. Define smooth, positive functions
$\delta, a_k:\bar\Omega\to\mathbb{R}$:
$$\delta(x)|D(x)|=\frac{\xi}{2}, \qquad
a_k(x)^2=(1-\delta(x))\phi_k(x) \qquad \mbox{for } ~~k=1\ldots 3,$$ 
so that: $\delta D = D- \sum_{k=1}^3 a^2_k \eta_k\otimes\eta_k$. 
Clearly, (\ref{condixi}) guarantees the bound:
\begin{equation}\label{uno}
\delta\leq\frac{1}{2} \quad \mbox{ in } \; \bar\Omega.
\end{equation}
Given $(v_{k-1}, w_{k-1})\in \mathcal{C}^\infty(\bar\Omega,
\mathbb{R}^3)$, the successive corrections $v_k$ and $w_k$ are now constructed by applying Proposition
\ref{OneStep} to $v=v_{k-1}$, $w=w_{k-1}$, $a=a_k$, $\eta=\eta_k$ and an
appropriate $\lambda=\lambda_k\geq 1$ determined below in (\ref{tre})
and (\ref{assum2}). Observe that:
\begin{equation*}
\begin{split}
\tilde{D} & = D + \big( \frac{1}{2}\nabla v\otimes\nabla v + \mathrm{sym} \nabla
  w\big) - \big(\frac{1}{2}\nabla \tilde{v}\otimes\nabla \tilde{v} + \mathrm{sym} \nabla
  \tilde{w}\big)  \\
& = D- \sum_{k=1}^3 a^2_k \eta_k\otimes\eta_k  \\ & 
\qquad
- \sum_{k=1}^3 \Big(\big(\frac{1}{2}\nabla v_k\otimes\nabla v_k +
    \mathrm{sym} \nabla w_k\big) 
- \big(\frac{1}{2}\nabla v_{k-1}\otimes\nabla v_{k-1} +
    \mathrm{sym} \nabla w_{k-1}+a_k^2 \eta_k\otimes\eta_k \big)\Big)\\ 
&  =\delta D - \sum_{k=1}^3 B_k,
\end{split}
\end{equation*}
where (\ref{ErrEstEta}) yields the following pointwise bound on the
matrix-valued fields $\{B_k\}_{k=1}^3$:
\begin{equation}\label{help}
|B_k| \leq \frac{a_k|\nabla a_k|}{2\pi\lambda_k} + \frac{a_k|\nabla^2
  v_{k-1}|}{\pi\lambda_k} + \frac{|\nabla a_k|^2}{2\pi^2\lambda_k^2} \quad
\mbox{in } \bar\Omega.
\end{equation}

\medskip

{\bf 2.} To prove positivity of the decomposition in (\ref{tilded1}), we set:
$$\tilde d = \frac{\xi d}{4\|D\|_0} = \frac{d}{2} \min_{x\in
  \bar\Omega} \delta(x)$$
and use Lemma \ref{CoeffFormula} to:
$\sum_{k=1}^3 (\tilde \phi_k - \delta\phi_k)\eta_k\otimes \eta_k =
\tilde D - \delta D = -\sum_{k=1}^3 B_k$ in $\bar\Omega.$
Namely, by Lemma \ref{CoeffFormula} (ii) it follows that for
$k=1\ldots 3$ we have:
\begin{equation}\label{deco}
\tilde \phi_k \geq \delta\phi_k - \frac{5\sqrt{3}}{8} \big| \sum_{i=1}^3 B_i \big|\geq
\frac{\delta\phi_k}{2} \geq \frac{\delta d}{2}\geq \tilde d \qquad
\mbox{in } \bar\Omega, 
\end{equation}
where the second inequality above is valid when:
\begin{equation}\label{due}
\frac{5\sqrt{3}}{8}|B_i |\leq \frac{\delta\phi_k}{6} \qquad \mbox{in }
\bar\Omega, \quad \mbox{ for } i,k :1\ldots 3.
\end{equation}
Note that  the first estimate in (\ref{tilded2}) holds then as well,
again in view of Lemma \ref{CoeffFormula} (ii):
$$|\tilde D| \leq \delta |D| + \big|\sum_{i=1}^3 B_i \big| \leq \frac{\xi}{2} + \frac{4}{5\sqrt{3}}\delta\phi_i \leq
\frac{\xi}{2} + \frac{\delta}{2} |D| = \frac{3}{4}\xi \quad
\mbox{in } \bar\Omega.$$

For the validity of (\ref{due}), we choose $\{\lambda_k\}_{k=1}^3$ so that:
\begin{equation}\label{tre}
\begin{split}
 \frac{5\sqrt{3}}{8}\frac{a_i|\nabla a_i|}{2\pi\lambda_i} \leq
\frac{\delta\phi_k}{18} \quad & \mbox{and}\quad 
\frac{5\sqrt{3}}{8}\frac{a_i|\nabla^2 v_{i-1}|}{\pi\lambda_i} \leq
\frac{\delta\phi_k}{18} \\
& \mbox{and}\quad 
\frac{5\sqrt{3}}{8}\frac{|\nabla a_{i}|}{2\pi^2\lambda_i^2} \leq
\frac{\delta\phi_k}{18} \quad 
\mbox{in } \bar\Omega, \quad \mbox{ for } i,k:1\ldots 3.
\end{split}
\end{equation}

\medskip

{\bf 3.} Observe that, by Lemma \ref{CoeffFormula} and the definition of $a_k$, we get:
\begin{equation}\label{SumA2}
  \sum_{k=1}^3 a_k \leq \sqrt{3}\big(\sum_{k=1}^3
    a^2_k\big)^{1/2} = \sqrt{3}\big( (1-\delta) \mbox{Tr}\;D\big)^{1/2}
    \leq 3|D|^{1/2} \quad\mbox{in } \bar\Omega. 
\end{equation}
By (\ref{ErrVWlambda}), there follows the second inequality in (\ref{tilded2}):
\begin{equation*}
|\tilde v - v| \leq \sum_{k=1}^3 \frac{a_k}{\lambda_k\pi} \leq
\frac{3|D|^{1/2}}{\pi\displaystyle{\min_{k=1\ldots 3}}\{\lambda_k\}} \leq
\epsilon \quad\mbox{in } \bar\Omega,
\end{equation*}
if only we assume the following condition:
\begin{equation}\label{assum2}
\lambda_k \geq \frac{\|D\|_0^{1/2}}{\epsilon} \quad \mbox{for } k=1\ldots 3.
\end{equation}

Note that the third inequality in (\ref{tre}) and the bound on $\phi_k$ in terms of $|D|$ yield:
\begin{equation}\label{cinque}
\frac{|\nabla a_k|}{\pi\lambda_k}\leq
\Big(\frac{16}{5\sqrt{3}} \frac{\delta\phi_k}{18} \Big)^{1/2}
\leq \Big(\frac{\delta |D|}{9}\Big)^{1/2}\leq \frac{1}{3}|D|^{1/2} \quad\mbox{in } \bar\Omega.
\end{equation}
Thus, by (\ref{ErrGrad}) and (\ref{SumA2}) we conclude the first
bound in (\ref{tilded3}):
\begin{equation*}
|\nabla \tilde v - \nabla v| \leq \sum_{k=1}^3|\nabla v_k - \nabla v_{k-1}|\leq
\sum_{k=1}^3\big(\frac{|\nabla a_k|}{\lambda_k\pi} + 2a_k\big)\leq
7|D|^{1/2} \quad\mbox{in } \bar\Omega.
\end{equation*}
Similarly: $|\nabla v_k|\leq |\nabla v_0| + 7|D|_0^{1/2} $ for all $k=1\ldots 3$.
Using the second inequality in (\ref{ErrVWlambda}) together with 
(\ref{assum2}) and (\ref{SumA2}), we obtain the third bound in (\ref{tilded2}):
\begin{equation*}
\begin{split}
|\tilde w - w| & \leq \sum_{k=1}^3 \frac{a_k}{\lambda_k\pi}
\big(|\nabla v_{k-1}| + \frac{a_k}{4}\big) \leq \sum_{k=1}^3
\frac{a_k}{\lambda_k\pi} \big(|\nabla v| + 7 |D|^{1/2} +
\frac{a_k}{4}\big) \\ & \leq
\frac{|D|^{1/2}}{\displaystyle{\min_{k=1\ldots 3}}\{\lambda_k\}}\big(|\nabla v| + 8
|D|^{1/2} \big)\leq \epsilon \big(|\nabla v| + 8 |D|^{1/2} \big) \quad\mbox{in } \bar\Omega,
\end{split}
\end{equation*}
whereas (\ref{ErrGrad}) and (\ref{cinque}) are used for the final estimate in (\ref{tilded3}):  
\begin{equation*}
\begin{split}
|\nabla\tilde w & - \nabla w|  \leq \sum_{k=1}^3 \bigg( 2a_k |\nabla
v_{k-1}| + a_k^2 + \frac{2|\nabla v_{k-1}| |\nabla a_k| + a_k|\nabla
  a_|}{2\pi \lambda_k}  + \frac{2a_i
  |\nabla^2 v_{i-1}|}{2\pi\lambda_k}\bigg) \\ & \leq \sum_{k=1}^3
\bigg( 2a_k (|\nabla v| + 7|D|^{1/2}) +
a_k^2 + \frac{2|\nabla a_k| (|\nabla v| + 7 |D|^{1/2} )  +
  a_k|\nabla a_k|}{2\pi \lambda_k}  + \frac{a_k|\nabla^2 v_{k-1}|}{\pi\lambda_k}\bigg)  
\\ & \leq 7|D|^{1/2}\big(|\nabla v| + 7|D|^{1/2} \big) + 4
|D| \quad\mbox{in } \bar\Omega.
\end{split}
\end{equation*}
Above, the term $\frac{a_k|\nabla^2 v_{k-1}|}{\pi\lambda_k}$ has been bounded by
$\frac{1}{2}|D|_0$ in view of (\ref{tre}). 
\end{proof}

\bigskip

\noindent {\bf Proof of Theorem \ref{Approx2}.}

\smallskip

{\bf 1.} Assume first that (\ref{DefectBasis}) holds.
We will construct a sequence of smooth approximations $\{(v_k, w_k)\}_{k=0}^\infty$
which converge in $\mathcal{C}^1$ to the required solution in
(\ref{result}). Starting with $v_0$, $w_0$, we define recursively
$v_{k+1}$ and $w_{k+1}$ by applying Proposition \ref{Stage2} to
$v_{k}\in \mathcal{C}^\infty(\bar{\Omega})$ and $w_{k}\in
\mathcal{C}^\infty(\bar{\Omega},\R^2)$, where we denote the corresponding defect $D_k \doteq
A - (\frac{1}{2}\nabla v_k\otimes \nabla v_k + \mathrm{sym} \nabla
w_k)$, and request that the construction parameters $\epsilon_k, \xi_k>0$ satisfy: 
\begin{equation}\label{result2}
 \sum_{k=0}^\infty \epsilon_k\leq\beta \epsilon, \qquad
 \sum_{k=0}^\infty\xi_k^{1/2} \leq 1 \quad \mbox{ and } \quad \xi_k \leq
 \min_{x\in\bar\Omega} |D_k(x)| \quad \mbox{ for all } k\geq 0,
\end{equation}
for an appropriately small constant $\beta\leq 1$.
By (\ref{tilded1}),  each $D_k$ can be decomposed in the basis
$\{\xi_i\otimes \xi_i\}_{i=1}^3$ with strictly positive
coefficients. Further, (\ref{tilded2}) implies:
\begin{equation}\label{result3}
 \|v_k - v_0\|_0 \leq \sum_{i=0}^{k-1} \|v_{i+1}-v_{i}\|_0 \leq \sum_{i=0}^{k-1} \epsilon_i \leq \epsilon,
\end{equation}
while by (\ref{tilded3}) and (\ref{tilded2}) we obtain:
\begin{equation*}
 \|\nabla v_{k+m} - \nabla v_k\|_0\leq \sum_{i=k}^{k+m-1} \|\nabla
 v_{i+1} - \nabla v_{i}\|_0\leq C\sum_{i=k}^{k+m-1}\|D_{i}\|_0^{1/2}
 \leq C \sum_{i=k}^{k+m-1}\xi_{i-1}^{1/2}.
\end{equation*}
Consequently,  the sequence $\{v_k\}_{k=0}^\infty$ is Cauchy in
$\mathcal{C}^1(\bar\Omega)$ and thus it converges to some $v\in
\mathcal{C}^1(\bar\Omega)$. By (\ref{result3}), the first statement of 
(\ref{result}) follows. In particular, the norms  $\|\nabla v_k\|_0$
are uniformly bounded (by $C(\|\nabla v_0\|_0 +\|D_0\|_0 + 1)$, which allows to compute:
\begin{equation*}
\begin{split}
\| w_{k} - w_0\|_0 & \leq \sum_{i=0}^{k-1} \| w_{i+1} - w_{i}\|_0
\leq  C\sum_{i=0}^{k-1} \epsilon_{i} \big(\|\nabla 
 v_{i}\|_0 + \|D_{i}\|_0^{1/2}\big) \\ & \leq C \big(\|\nabla v_0\|_0
 +\|D_0\|_0  +\|D_0\|_0^{1/2} + 1\big)
\sum_{i=0}^{k-1}\epsilon_{i},\\
\|\nabla w_{k+m} - \nabla w_k\|_0 & \leq \sum_{i=k}^{k+m-1} \|\nabla
 w_{i+1} - \nabla w_{i}\|_0 \leq C \sum_{i=k}^{k+m-1}\|D_{i}\|_0^{1/2}
\big (\|\nabla v_{i}\|_0 + \|D_{i}\|_0^{1/2}\big) \\ & 
 \leq C \big(\|\nabla v_0\|_0 +\|D_0\|_0  +\|D_0\|_0^{1/2} + 1\big)
\sum_{i=k}^{k+m-1}\xi_{i-1}^{1/2},
\end{split}
\end{equation*}
where $C$ is a universal constant. Hence, $\{w_k\}_{k=0}^\infty$ is Cauchy and 
converges in $\mathcal{C}^1(\bar\Omega, \R^2)$ to some $w\in
\mathcal{C}^1(\bar\Omega, \mathbb{R}^2)$, satisfying:
\begin{equation*}
 \|A - \big(\frac{1}{2}\nabla v\otimes \nabla v + \mathrm{sym}\nabla
 w\big)\|_0 = \lim_{k\to\infty}\|D_k\|_0 = 0.
\end{equation*}
This proves the second statement in (\ref{result}), by (\ref{tilded2})
and (\ref{result2}). Also, it is clear that taking $\beta = C^{-1} \big(\|\nabla v_0\|_0
 +\|D_0\|_0  +\|D_0\|_0^{1/2} + 1\big)^{-1}$, there follows
 $\|w_k-w_0\|_0\leq \epsilon$ and so $\|w - w_0\|_0\leq \epsilon$.

\medskip

{\bf 2.} In the absence of the positivity of the decomposition
(\ref{DefectBasis}), we set $\tilde w_0(x,y) = w_0(x,y) -
\|D_0\|_0\big(\frac{\sqrt{2}+9}{4}x, (\sqrt{2}+\frac{9}{5})y\big)$. By
Lemma \ref{CoeffFormula} (iv), the decomposition  of the modified defect:
$$\tilde D_0\doteq A-\big( \frac{1}{2}\nabla v_0\otimes \nabla v_0 +
\Sym\nabla \tilde w_0\big) = D_0 +
\|D_0\|_0\cdot\mbox{diag}\Big\{\frac{\sqrt{2}+9}{4},
\sqrt{2}+\frac{9}{5}\Big\} = \sum_{k=1}^3 \phi_k\eta_k\otimes\eta_k,$$
obeys $\phi_k \geq \frac{\|D_0\|_0}{2}$ for $k=1\ldots 3$. Applying 
the first part of the proof to $v_0$, $\tilde w_0$ and $\tilde D_0$
yields $v\in\mathcal{C}^1(\bar\Omega)$ and
$w\in\mathcal{C}^1(\bar\Omega,\R^2)$ with the desired properties.
\endproof

\section{A numerical implementation of Theorem \ref{Approx2}}\label{Numerical1}

The purpose of this section is to obtain images of the first few
approximation steps in the convex integration construction of the
solutions to the Monge-Amp\`ere equation \eqref{MA}. We will consider
two case scenarios: (i) the right hand side $f(x,y) =
1$ and the subsolution $v_0(x,y) = x^2 -y^2$ is non-convex; (ii) the
right hand side $f(x,y) = -1$ and the subsolution $v_0(x,y) = x^2 +
y^2$ is strictly convex on the domain
$\Omega = (-0.5,0.5)\times(-0.5,0.5)$. We set the threshold $\epsilon =
0.1$ in seeking a solution 
$v$ with $\|v-v_0\|_0<0.1$ as in Theorem \ref{Approx2}. 

We were able to exhibit three consecutive corrugations.
Since the frequencies $\{\lambda_k\}$ quickly become very large, fine meshes needed to be
used, requiring a great computing power. On the other
hand, a priori estimates (\ref{tre}) and (\ref{assum2})
could be efficiently validated; indeed these estimates helped to
experimentally reduce the values of each $\lambda_k$.  

For the first approximation we sampled on a square grid with the
initial step size $0.001 $, followed by the step size of the order
$h\sim \frac{0.1}{\lambda_k}$. 
With this choice of $h$ we were so far able to perform two steps of
convex integration in both examples below. The derivatives on the mesh are:
\begin{equation*}
 \frac{\partial f}{\partial x}(x,\cdot) = \frac{1}{12h} 
 \big( f(x-2h,\cdot) - 8 f(x - h,\cdot) + 8 f(x + h, \cdot) - f(x + 2h,\cdot)
 \big)+O(h^5).
\end{equation*}
We assigned the values of the initial auxiliary variable
$w_0$ to make  the initial defect $D_0 = A- (\frac{1}{2}\nabla
v_0\otimes \nabla v_0 +\Sym \nabla w_0)$ diagonal, with the initial
positive definiteness constant $d=0.4\leq \phi_k$ for
all $k=1\ldots 3$. 
At each step, the defect $D$ was calculated explicitly and  then decomposed into $D = \sum_{k=1}^3
\phi_k\eta_k\otimes\eta_k$ using the formulas from Lemma \ref{CoeffFormula}. 
We set $\delta = 0.5$ and followed the indicated construction. 
By (\ref{tilded3}), our numerical implementation reduced the norm of defect to
$\frac{3}{4}$ of the previous defect norm every three convex integration
steps.

\smallskip

Define the coefficients:
\begin{equation*}
 \tilde a_k = \sqrt{\frac{\phi_k}{2}}
\end{equation*}
and the auxiliary matrices $\tilde{B}_k$ by:
\begin{equation*}
\tilde B_k = \big(\frac{1}{2}\nabla v_k\otimes\nabla v_k +
    \mathrm{sym} \nabla w_k\big) - \big(\frac{1}{2}\nabla v_{k-1}\otimes\nabla v_{k-1} +
    \mathrm{sym} \nabla w_{k-1}+\tilde a_k^2 \eta_k\otimes\eta_k \big).
\end{equation*}
From \eqref{help} we obtain bounds on $\tilde{B}_k$. To have
$\|\tilde{D}\|_0 \leq \frac{1}{2}\|D\|_0 + \sum_{k=1}^3\|B_k\|_0\leq
\frac{3}{4} \|D\|_0$ we request:
\begin{equation}\label{ConditionA}
 \|{\tilde B}_k\|_0\leq \frac{1}{12}\|D\|_0 \quad \mbox{ for all } k=1\ldots 3,
\end{equation}
while in order to get $\tilde\phi_k\geq \tilde d= \frac{d}{4}=0.1$, we proceed
as in (\ref{deco}) and introduce the condition:
\begin{equation}\label{ConditionB}
|{\tilde B}_k|<\frac{8}{15\sqrt{3}}\big(\frac{1}{2}\phi_i-0.1\big) \quad
\mbox{in } \bar\Omega, \quad \mbox{ for all } k,i=1\ldots 3.
\end{equation}

Conditions (\ref{ConditionA}) and (\ref{ConditionB}) were used to
determine the frequencies $\lambda_k$, where in estimating the three
terms in each $\tilde{B}_k$ according to (\ref{help}),  we bound $|\nabla
^2 v_{k-1}|$ using \eqref{1.10} with the previous choice of $\lambda_{k-1}$.
Once the three choices of $\lambda_k$ were made, we performed a
verification that the new $v$ and $w$ obtained with the described
above modification step remained indeed within
$\epsilon = 0.1$ error from the original fields. 

\smallskip

The values of $\lambda_1$ thus obtained were small enough to allow for 
the numerical executing the first step of convex integration. 
We defined the fields $v_1$ and $w_1$ as in Proposition \ref{OneStep}, 
calculated $\tilde B_1$, its decomposition into $\tilde B_1 =
\sum_{k=1}^3 \phi_k\eta_k\otimes\eta_k$, and verified the two conditions
 (\ref{ConditionA}) and (\ref{ConditionB}).
This operation has been repeated until the smallest possible $\lambda_1$ was found.
We then used this value and the resulting a priori estimates to obtain
the second frequency $\lambda_2$, small enough to apply the second step of convex integration. 
Thus we obtained the fields $v_2$ and $w_2$, and, by the same procedure described
above, the third optimal frequency $\lambda_3$ together with $v_3$
and $w_3$. The final step size $h$ needed to be reduced to
$0.0001$ in order to allow $10$ mesh points for each corrugation.  

\medskip

\begin{example}\label{Example_1}
We approximate $v_0(x,y) = x^2 - y^2$ with a solution $v$ to:
\begin{equation*}
  \Det \nabla^2 v  =  1.
\end{equation*}
Take $w_0(x,y) = (xy^2,x^2y)$ and $A(x,y) =
\big(5-\frac{x^2+y^2}{4} \big)\mbox{Id}_2$, so that: $-\cc A(x,y) = 1$.
Then:
\begin{equation*}
\frac{1}{2} \nabla v_0(x,y)\otimes\nabla v_0(x,y) + \mathrm{sym} \nabla w_0(x,y) =
\mbox{diag}\big\{ 2x^2+y^2, x^2+2y^2\big\}
\end{equation*}
and the corresponding defect is diagonal and positive definite:
\begin{equation*}
D(x,y) = \mbox{diag}\big\{
5-\frac{9x^2+5y^2}{4}, 5-\frac{5x^2+9y^2}{4}  \big\}
\end{equation*}
The function $v_0$ and its two subsequent corrugations are
shown in Figure \ref{Fig1}. In Figure \ref{Fig2} we show a detailed picture of the second corrugation; 
the red portion of the graph indicates the area on which we applied the
third corrugation  shown in Figure \ref{Fig3}.

\begin{figure}[h]
\centering
\begin{subfigure}{.33\textwidth}
  \centering
  \fbox{\includegraphics[height = 3.5cm,trim={7cm 0cm 7cm 2cm},clip]{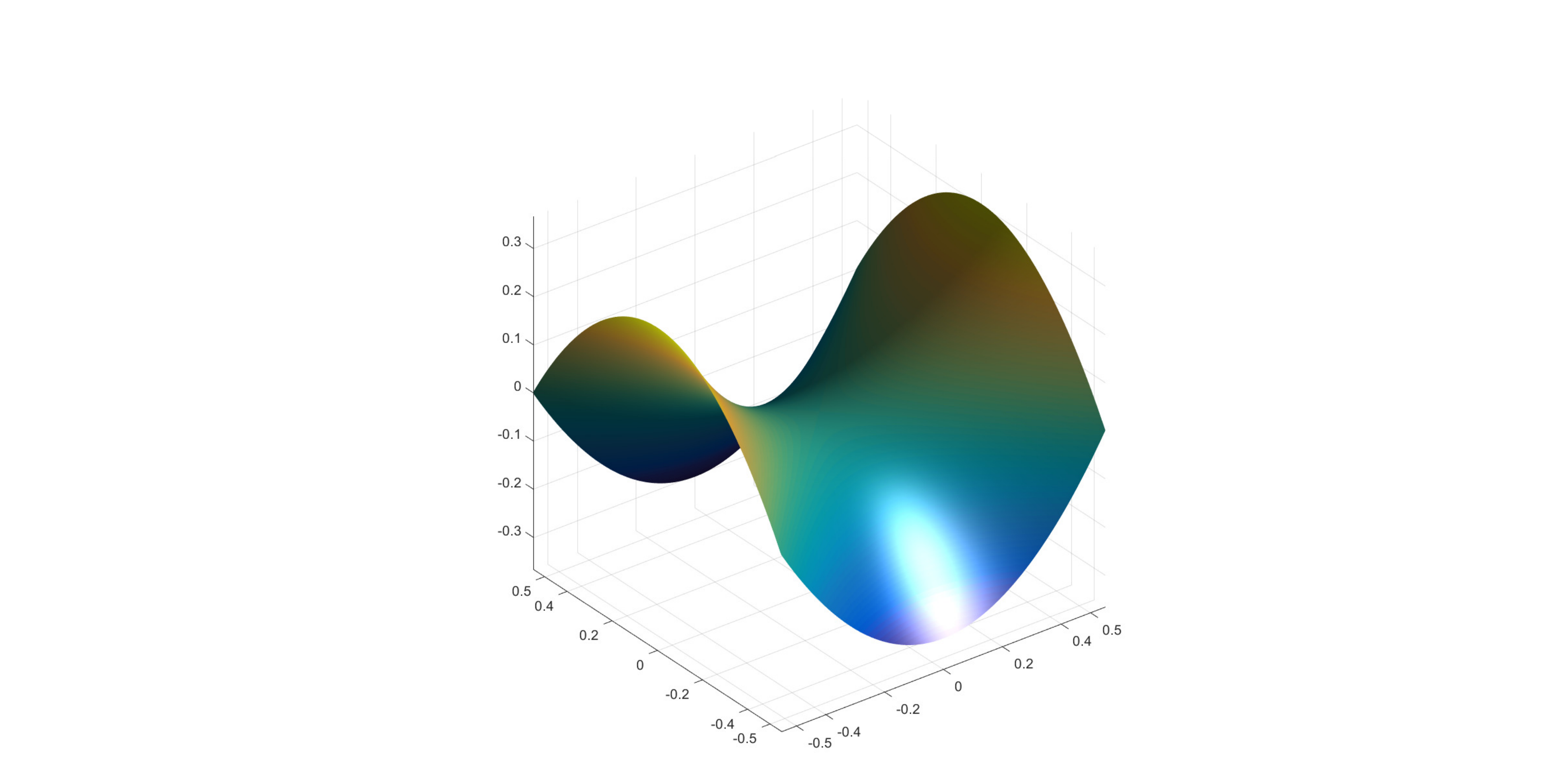}}
  \caption{The subsolution $v_0$ on $\bar\Omega$}
\end{subfigure}%
\begin{subfigure}{.33\textwidth}
  \centering
  \fbox{\includegraphics[height = 3.5cm,trim={7cm 1cm 7cm 1cm},clip]{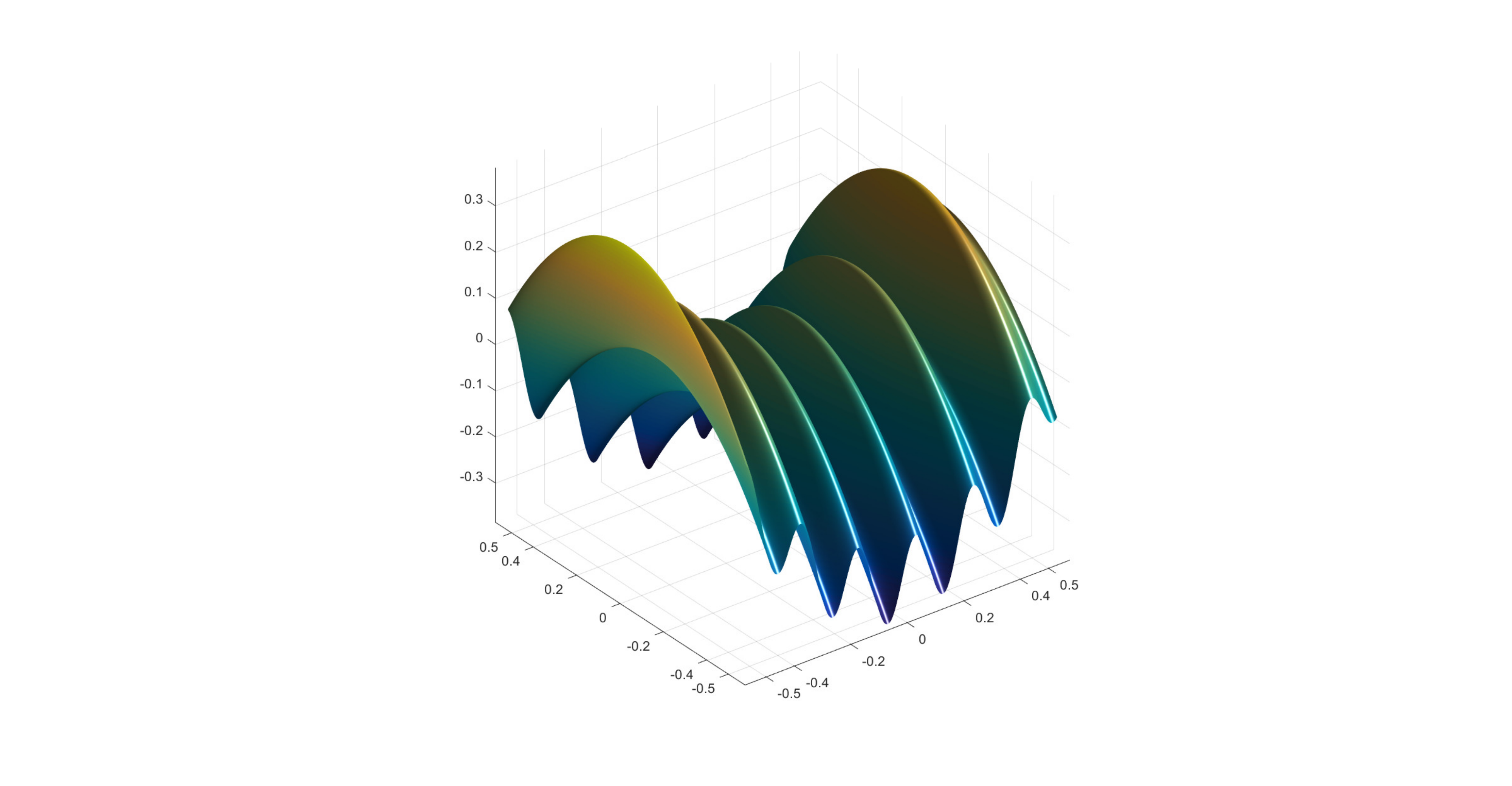}}
  \caption{One corrugation}
\end{subfigure}%
\begin{subfigure}{.33\textwidth}
  \centering
  \fbox{\includegraphics[height = 3.5cm,trim={7cm 1cm 7cm 1cm},clip]{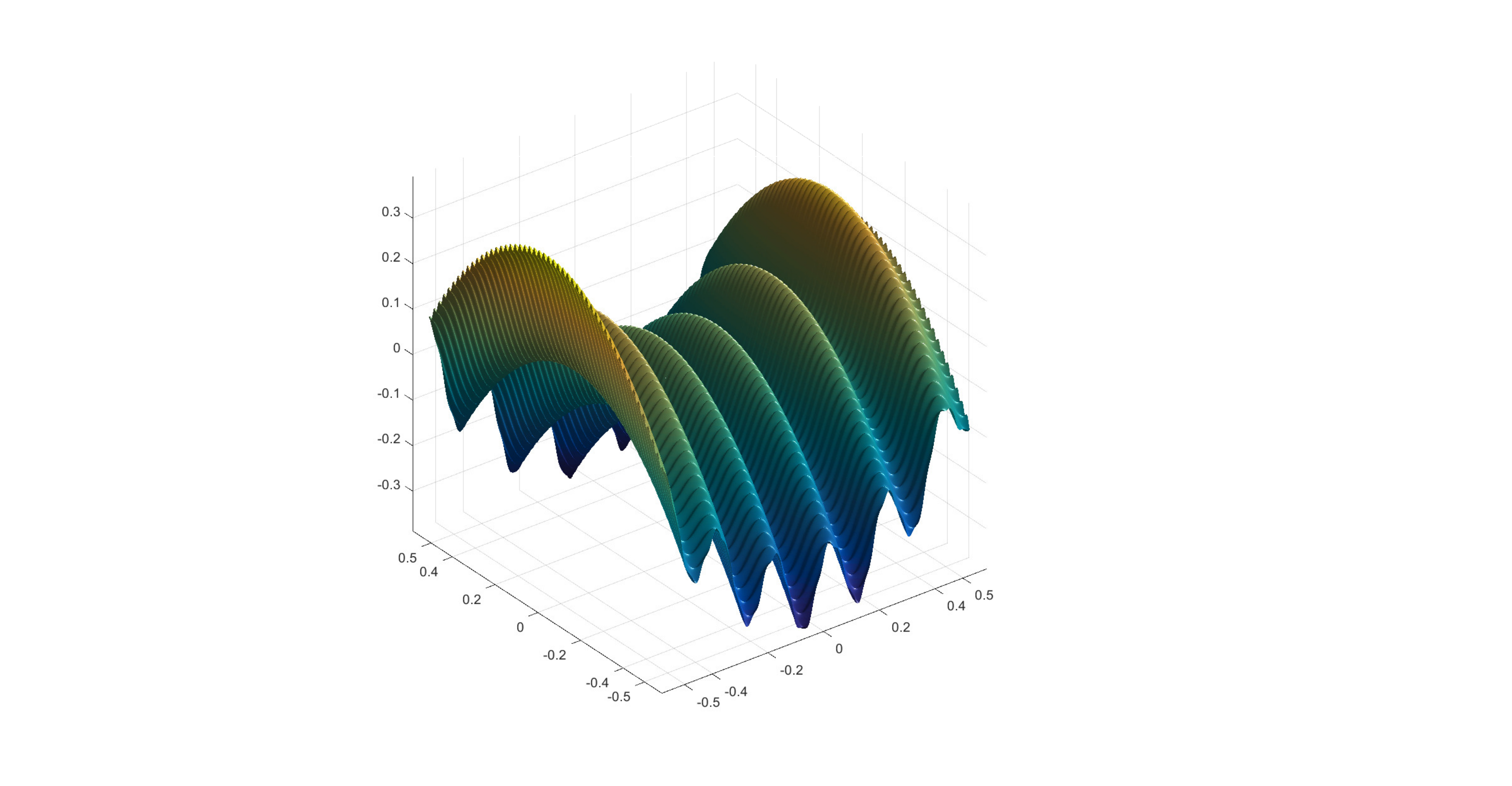}}
  \caption{Two corrugations}
\end{subfigure}
\caption{Construction in Example \ref{Example_1}}
\label{Fig1}
\end{figure}

\begin{figure}[h]
\fbox{\includegraphics[height=12cm,trim={8cm 1cm 8cm 1cm},clip]{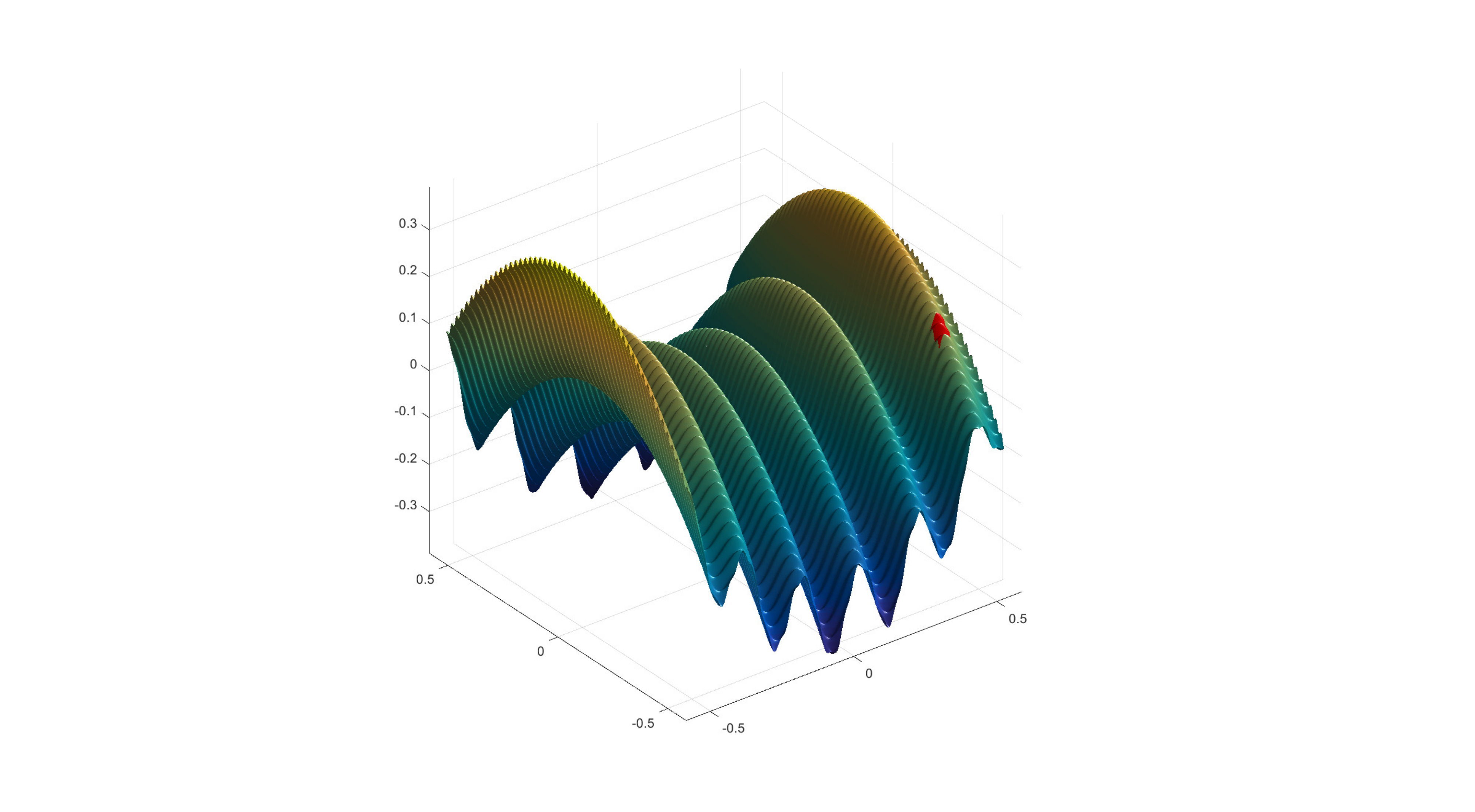}}
\caption{Two corrugations in Example \ref{Example_1}. The red detail
  is shown in Figure \ref{Fig3}}
\label{Fig2}
\end{figure}


\begin{figure}[h]
\fbox{\includegraphics[height=14cm,trim={9cm 1cm 9cm 1cm},clip]{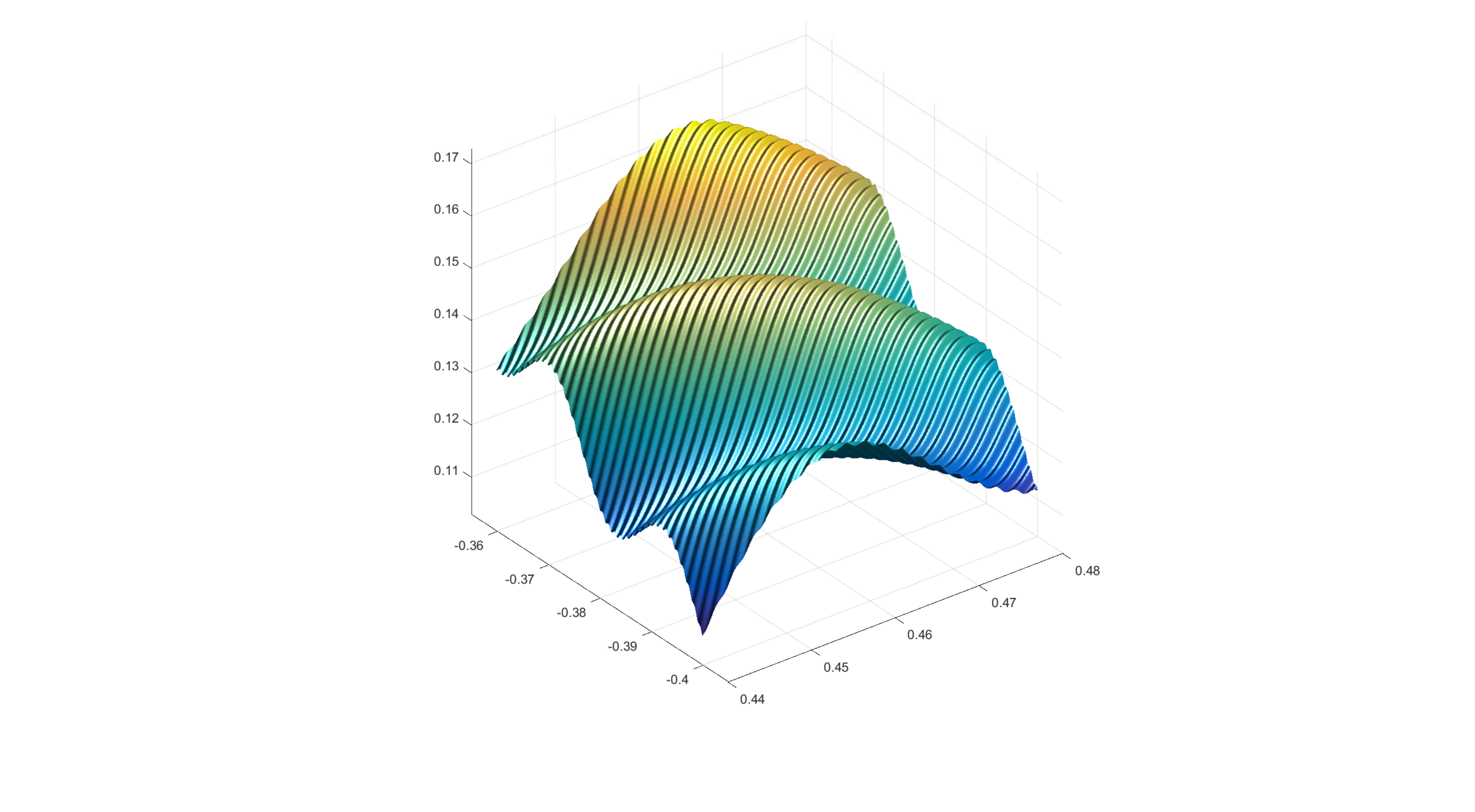}}
\caption{The detail of the three corrugations in Example \ref{Example_1}}
\label{Fig3}
\end{figure}
\end{example}

\begin{example}\label{Example_2}
We approximate $v_0(x,y) = x^2 + y^2$ with a solution $v$ to:
\begin{equation*}
 \Det \nabla^2 v  =  -1.
\end{equation*}
In this example we take $w_0(x,y) = (-xy^2,-x^2y)$ and
$A(x,y) =  \big(5+\frac{x^2+y^2}{4}\big) \mbox{Id}_2$, satisfying
$-\cc A(x,y) = -1$ and resulting in the diagonal, positive definite defect:
\begin{equation*}
D(x,y) =  \big\{
5-\frac{7x^2-5y^2}{4}, 5+\frac{5x^2-7y^2}{4} \big\}.
\end{equation*}
We plot three images starting from $v_0$ and subsequently adding the
first and second corrugations in Figure \ref{Fig11}.  As before, we provide a more detailed picture of the second
and third corrugations in Figures \ref{Fig22} and \ref{Fig33}.

\begin{figure}[h]
\centering
\begin{subfigure}{.33\textwidth}
  \centering
  \fbox{\includegraphics[height = 3.5cm,trim={7cm 1cm 7cm 1cm},clip]{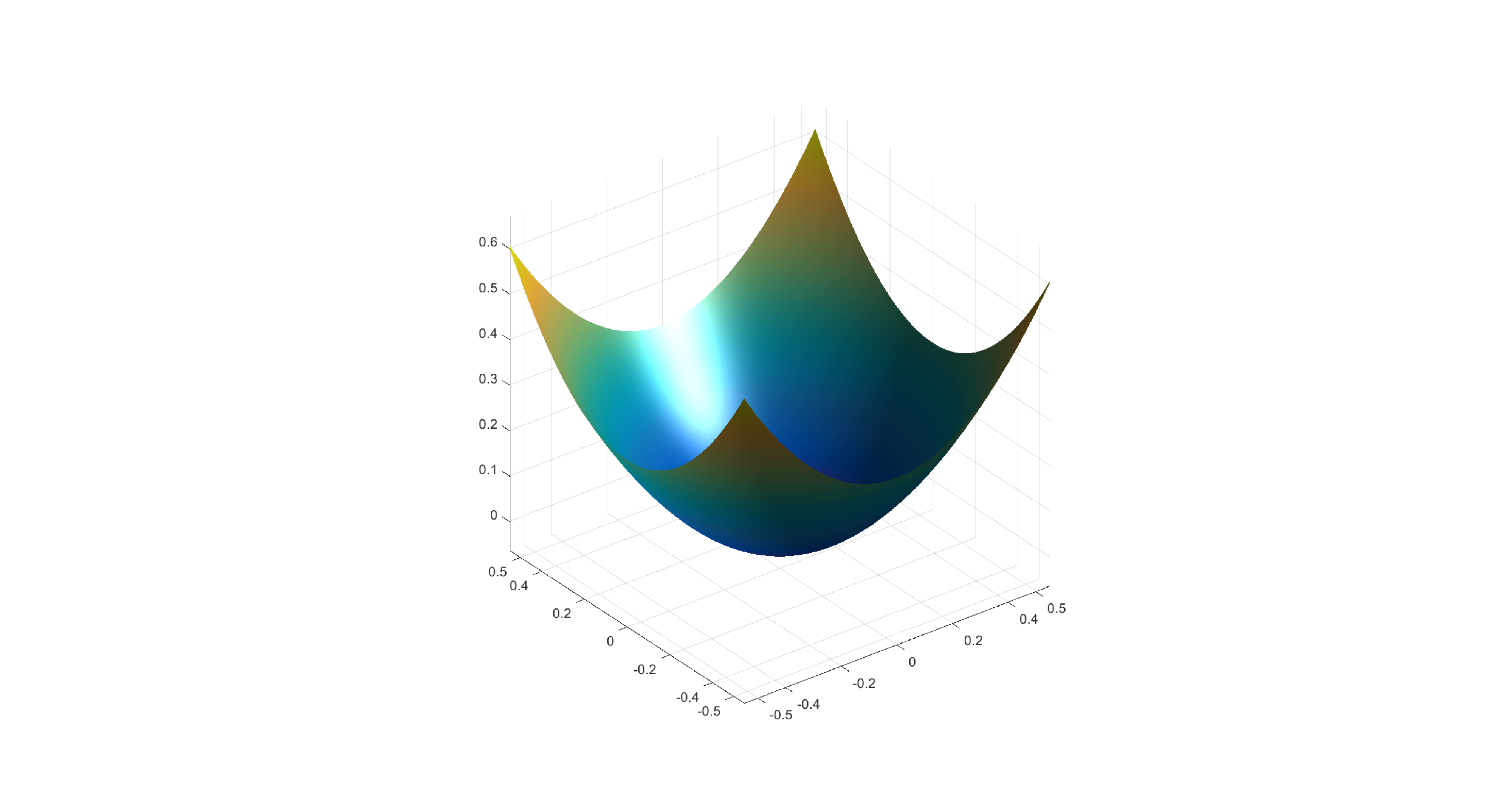}}
  \caption{Original function $v_0$}
\end{subfigure}%
\begin{subfigure}{.33\textwidth}
  \centering
  \fbox{\includegraphics[height = 3.5cm,trim={7cm 1cm 7cm 1cm},clip]{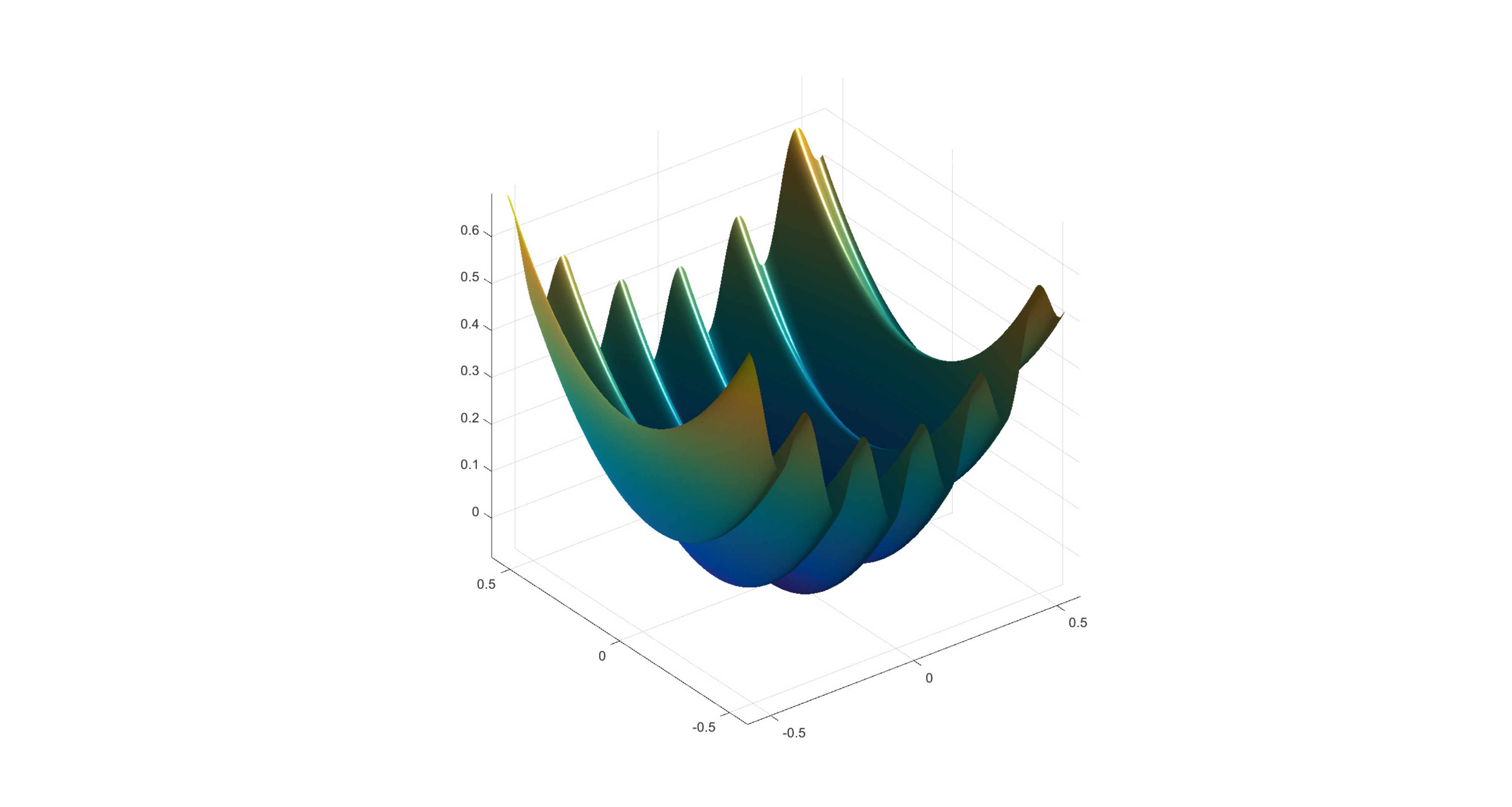}}
  \caption{One corrugation}
\end{subfigure}%
\begin{subfigure}{.33\textwidth}
  \centering
  \fbox{\includegraphics[height = 3.5cm,trim={7cm 1cm 7cm 1cm},clip]{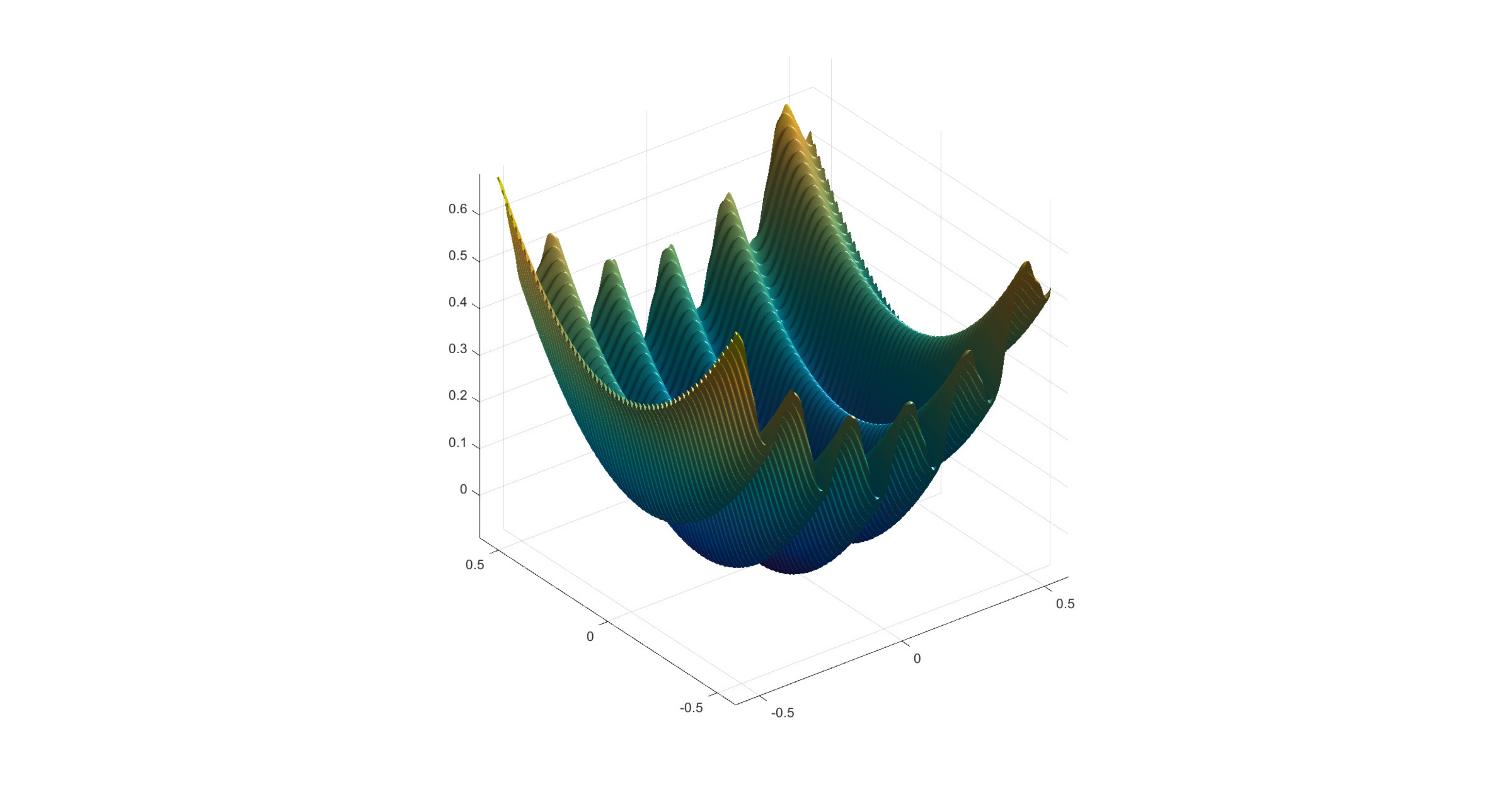}}
  \caption{Two corrugations}
\end{subfigure}
\caption{Construction in Example \ref{Example_2} }
\label{Fig11}
\end{figure}

\begin{figure}[h]
\fbox{\includegraphics[height=13cm,trim={8cm 1cm 8cm 1cm},clip]{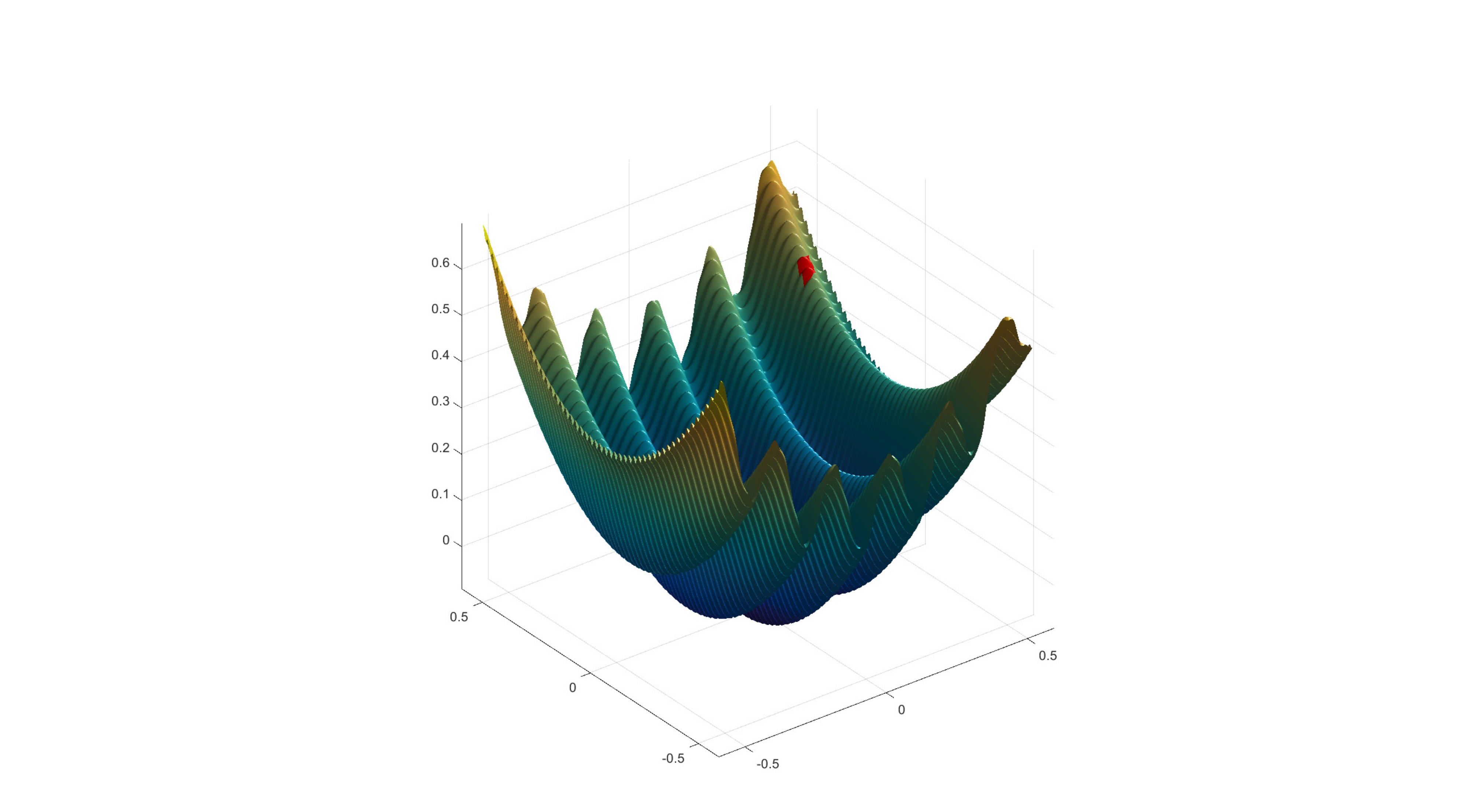}}
\caption{Two corrugations in Figure \ref{Fig11}. The red detail shown
  in Figure \ref{Fig33}}
\label{Fig22}
\end{figure}

\begin{figure}[h]
\fbox{\includegraphics[height=14cm,trim={9cm 1cm 9cm 1cm},clip]{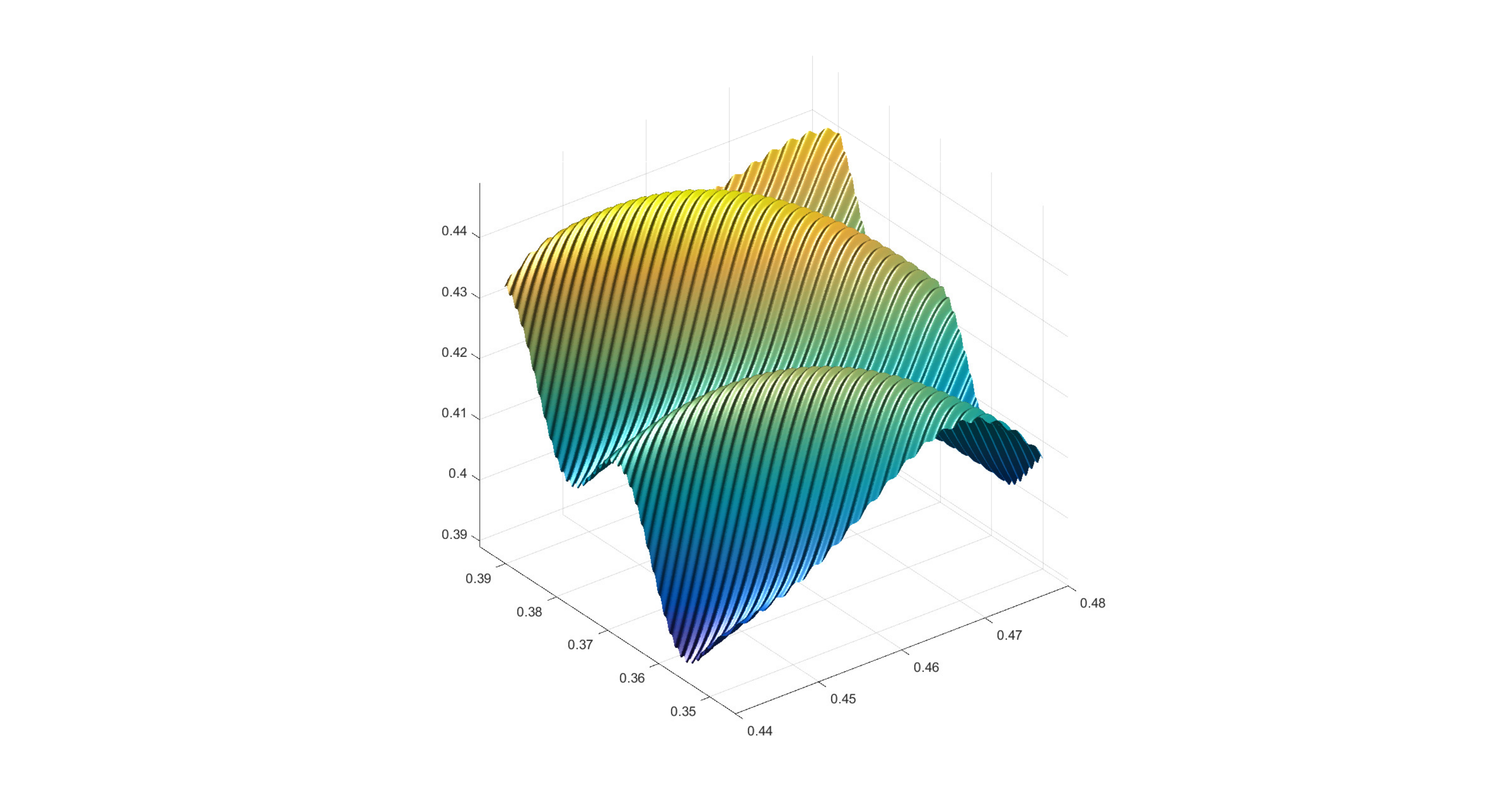}}
\caption{The detail of the three corrugations in Example \ref{Example_2}}
\label{Fig33}
\end{figure}
\end{example}

We conclude the discussion with a table listing some of the numerics
results and implementation choices. 
The values of $\{\lambda_k\}_{k=1}^3$ were obtained experimentally.
The values $\|v-v_0\|_0$ give an upper estimate of the uniform
distance of $v$ obtained through three steps of convex
integration from the initial subsolution $v_0$.
The value $(\|\tilde B_1\|_0+\|\tilde B_2\|_0)/\|D\|_0$ which does not
take the third corrugation into account, needs to be below
$2\cdot\frac{1}{12}=\frac{1}{6}$. 
The contribution of the last corrugation is guaranteed to be less than
$\frac{1}{12}\|D\|_0$ through the a priori estimates.  
Lastly,  $\min\phi_k$ are the minima of each of the coefficients in
$\bar\Omega$, in the defect computed after two corrugations. 
The a priori estimates once again guarantee that the third step will
not make these less than the required error value $0.1$.

\begin{center}
\begin{tabular}{ |c|c|c|c| } 
 \hline
   & Example \ref{Example_1} & Example \ref{Example_2} \\ 
 \hline
 $f(x,y)$ & 1 & -1  \\ 
 \hline
 $v_0(x,y)$ & $x^2-y^2$ & $x^2+y^2$ \\ 
 \hline
 $w_0(x,y)$ & $(xy^2,yx^2)$ & $(-xy^2,-yx^2)$  \\ 
 \hline
 $\lambda_1$ & 5 & 5  \\ 
 \hline
 $\lambda_2$ & 50 & 57  \\ 
 \hline
 $\lambda_3$ & 1000 & 1100  \\ 
 \hline
 $\|v-v_0\|_0$ & 0.0995 & 0.999 \\ 
 \hline
 $(\|\tilde B_1\|_0+\|\tilde B_2\|_0)/\|D\|_0$ & 0.1339 & 0.1246 \\ 
 \hline
 $\min\phi_1$ & 0.79 & 0.94 \\ 
 \hline
 $\min\phi_2$ & 1.14 & 1.29 \\ 
 \hline
 $\min\phi_3$ & 1.14 & 1.28 \\ 
 \hline
\end{tabular}
\end{center}

\section{The $\mathcal{C}^{1,\alpha}$ approximation}\label{sec4}

In this and the next section we give a proof of Theorem \ref{weakMA} using a constructive,
numerically implementable algorithm. The details of the
implementation and the resulting visualizations will be presented in
section \ref{num2_sec}. The construction follows the proof in
\cite{LP}, we however make some important modifications and compute all the
relevant constants explicitly. We begin with a choice of a standard
mollifier and some preliminary estimates. 

Let $\phi\in\mathcal{C}_c^\infty(B(0,1))$ be the following radially symmetric function:
\begin{equation}\label{Mollifier}
\varphi(x)= \begin{cases}
\displaystyle{\frac{1}{A}}\mbox{exp}\big(-\frac{1}{1-|x|^2}\big) & |x|\leq 1 \\
0 & |x|\geq 1, \end{cases}
\end{equation}
where $A=\int_{B_1(0)}\exp (-\frac{1}{1-|x|^2})~\mbox{d}x = \pi\cdot (1/e + Ei(-1))$ so that
$\int_{\R^2}\varphi =1$. The constant $A$, given in terms of the
exponential integral $Ei$, may be approximated to any degree of precision.
We will use $A\in [0.46, 0.47]$ in the estimates below, but when
implementing the algorithm numerically we will evaluate $A$ more precisely. 

\begin{lemma}\label{MolliLem}
Taking $\varphi$ as in (\ref{Mollifier}), we have:
\begin{equation}\label{norms}
  \|\varphi\|_{L^1(\R^2)} =1,\quad\|\nabla\varphi\|_{L^1(\R^2)} \leq
  3.1,\quad\|\nabla^2\varphi\|_{L^1(\R^2)} \leq
  15.9,\quad\|\nabla^3\varphi\|_{L^1(\R^2)} \leq 210. 
\end{equation}
Denote: $\varphi_l(x) = \frac{1}{l^2}\varphi\big(\frac{x}{l}\big)$ for all $l\in (0,1)$. 
Then, for every $f,g\in \mathcal{C}^0(\R^2)$ there holds:
\begin{equation} \label{MolIn1}
\|\nabla^{k+j}(f\ast\varphi_l)\|_0\leq \frac{1}{l^k}
\|\nabla^k\varphi\|_{L^1(\R^2)}\|\nabla^j f\|_0 \quad \mbox{ for
    all } ~ k,j\geq 0,
\end{equation}
\begin{equation}\label{MolIn2} 
\begin{split}
& \|f\ast\varphi_l-f\|_{0}\leq \frac{1}{2} l^2\|\nabla^2 f\|_0,\qquad
  \|\nabla(f\ast\varphi_l-f)\|_{0}\leq l\|\nabla^2 f\|_0 ,\\
&  \|\nabla^2 (f\ast\varphi_l-f)\|_{0}\leq 2\|\nabla^2 f\|_0.
\end{split}
\end{equation}
Moreover, for all $\alpha\in (0,1]$ there holds:
\begin{equation}\label{Molln3}
 \|f\ast\varphi_l-f\|_{0}\leq l^{\alpha}[f]_{\alpha}, \qquad
\|\nabla(f\ast\varphi_l)\|_{0}\leq (3.1) l^{\alpha-1}[f]_{\alpha},
\end{equation}
and further:
\begin{equation}\label{MolIn5}
\begin{split}
&  \|(fg)\ast\varphi_l-(f\ast\varphi_l)(g\ast\varphi_l)\|_{0}\leq
    2l^{2\alpha}[f]_{\alpha}[g]_{\alpha},\\ 
&  \|\nabla \big((fg)\ast\varphi_l-(f\ast\varphi_l)(g\ast\varphi_l)\big)\|_{0}\leq
  (9.3)l^{2\alpha-1}[f]_{\alpha}[g]_{\alpha},\\ 
&  \qquad \|\nabla^2\big((fg)\ast\varphi_l-(f\ast\varphi_l)(g\ast\varphi_l)\big)\|_{0}\leq
  (67)l^{2\alpha-2}[f]_{\alpha}[g]_{\alpha},\\ 
& \qquad \|\nabla^3\big((fg)\ast\varphi_l-(f\ast\varphi_l)(g\ast\varphi_l)\big)\|_{0}\leq
   (925.8) l^{2\alpha-3}[f]_{\alpha}[g]_{\alpha}. 
\end{split}
\end{equation}
All norms above are taken on the whole domain $\R^2$.
\end{lemma}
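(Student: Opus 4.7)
The plan is to verify the estimates in (\ref{norms})--(\ref{MolIn5}) grouped by type. The $L^1$ bounds on $\nabla^k\varphi$ in (\ref{norms}) follow from differentiating the explicit formula (\ref{Mollifier}): since $\varphi$ is radial and smooth on $|x|<1$, each $\nabla^k\varphi$ can be written as $\varphi(x)$ times a rational function in $|x|^2$ and $(1-|x|^2)^{-1}$, whose integral over $B_1(0)$ is a single numerical quantity bounded using $A\in[0.46, 0.47]$ and polar coordinates. The convolution bound (\ref{MolIn1}) is then Young's inequality applied to $\nabla^{k+j}(f\ast\varphi_l) = (\nabla^j f)\ast(\nabla^k\varphi_l)$ combined with the scaling identity $\|\nabla^k\varphi_l\|_{L^1(\R^2)} = l^{-k}\|\nabla^k\varphi\|_{L^1(\R^2)}$.

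For the classical difference estimates (\ref{MolIn2}), I would write $(f\ast\varphi_l - f)(x) = \int[f(x-y)-f(x)]\varphi_l(y)\,dy$ and apply second-order Taylor expansion: the linear term vanishes by the radial symmetry $\int y\,\varphi(y)\,dy = 0$, leaving $\tfrac{1}{2}l^2\|\nabla^2 f\|_0$; the gradient bound uses the first-order Taylor expansion of $\nabla f(x-y)-\nabla f(x)$ with $|y|\leq l$ on $\mathrm{supp}\,\varphi_l$; the Hessian bound is the triangle inequality together with $\nabla^2(f\ast\varphi_l) = (\nabla^2 f)\ast\varphi_l$. The H\"older estimates (\ref{Molln3}) are analogous: the first is immediate from $|f(x-y)-f(x)|\leq [f]_\alpha |y|^\alpha$ on $\mathrm{supp}\,\varphi_l$, while the second uses $\int\nabla\varphi_l = 0$ to rewrite $\nabla(f\ast\varphi_l)(x) = \int[f(x-y)-f(x)]\nabla\varphi_l(y)\,dy$ and then scales, picking up the constant $3.1\geq\|\nabla\varphi\|_{L^1(\R^2)}$.

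The heart of the argument is the commutator bounds (\ref{MolIn5}), for which I would use the Constantin-E-Titi-type identity
\[
R(x)\doteq (fg)\ast\varphi_l(x) - (f\ast\varphi_l)(x)(g\ast\varphi_l)(x) = \int\varphi_l(y)[f(x-y)-f(x)][g(x-y)-g(x)]\,dy - (f\ast\varphi_l - f)(x)(g\ast\varphi_l - g)(x).
\]
The zeroth-order bound on $R$ is immediate from (\ref{Molln3}) applied twice. For $\nabla R$, I would differentiate: the integral term produces an analogous integral against $\nabla\varphi_l$ bounded by $3.1\,l^{2\alpha-1}[f]_\alpha[g]_\alpha$, and the product-rule correction yields two terms of the form $(g\ast\varphi_l - g)\nabla(f\ast\varphi_l)$ and $(f\ast\varphi_l - f)\nabla(g\ast\varphi_l)$, each bounded via (\ref{Molln3}), summing to $9.3\,l^{2\alpha-1}[f]_\alpha[g]_\alpha$. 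Iterating the same differentiation with $\nabla^2\varphi_l$ and $\nabla^3\varphi_l$, and plugging in the $L^1$ constants $15.9$ and $210$ from (\ref{norms}) together with the product rule applied to the correction term, yields the constants $67$ and $925.8$ for $\nabla^2 R$ and $\nabla^3 R$ respectively.

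The main obstacle is a bookkeeping one in the higher-order estimates of (\ref{MolIn5}): the product rule generates a proliferation of cross-terms, and each must be organized so that every difference $[f(x-y)-f(x)]$ meets a derivative of $\varphi_l$ that is integrable against $|y|^\alpha$, thereby realizing the full $l^{2\alpha}$ cancellation inherent to the commutator structure. A naive decomposition placing all derivatives on one factor would lose a factor $l^\alpha$ and yield bounds weaker than claimed.
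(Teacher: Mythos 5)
Your proposal is correct and follows essentially the same route as the paper: polar-coordinate evaluation for (\ref{norms}), Young's inequality and scaling for (\ref{MolIn1}), Taylor expansion with the odd-moment cancellation for (\ref{MolIn2}), the support-of-$\varphi_l$ estimate for (\ref{Molln3}), and the Constantin--E--Titi commutator identity for (\ref{MolIn5}). The paper simply cites Lemma 2.1 of \cite{CDS} for the last point rather than writing out the identity, but the decomposition you display is exactly the one used there, and your observation that each H\"older difference must be paired with a derivative of $\varphi_l$ to realize the full $l^{2\alpha}$ gain is the correct organizing principle for the bookkeeping.
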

\begin{proof}
The estimates (\ref{norms}) follow by calculating the
indicated integrals in polar coordinates and then evaluating the
1-dimensional integrals numerically. The bound (\ref{MolIn1}) results from:
$\|\nabla^k\varphi_l\|_{L^1(\R^2)}=
l^{-k}\|\nabla^k\varphi\|_{L^1(\R^2)}$ for every $k\geq 0$, whereas to
get (\ref{MolIn2}) we use Taylor's expansion of $f$ at a given $x\in\R^2$.
Finally, writing $h=(fg)\ast\varphi_l-(f\ast\varphi_l)(g\ast\varphi_l)$, we observe as
in Lemma 2.1 \cite{CDS} that: 
\begin{equation*}
\begin{split}
& \|\nabla h\|_{0} \leq 3 l^{2\alpha} \|\nabla\varphi_l\|_{L^1(\R^2)} [f]_{\alpha}[g]_{\alpha}\\
& \|\nabla^2h\|_{0} \leq l^{2\alpha} \big(3\|\nabla^2\varphi_l\|_{L^1(\R^2)} +
2\|\nabla\varphi_l\|^2_{L^1(\R^2)} \big) [f]_{\alpha}[g]_{\alpha}\\
& \|\nabla^3h\|_{0} \leq l^{2\alpha} \big(3\|\nabla^3\varphi_l\|_{L^1(\R^2)} +
6\|\nabla^2\varphi_l\|^2_{L^1(\R^2)} \|\nabla\varphi_l\|_{L^1(\R^2)} \big) [f]_{\alpha}[g]_{\alpha},
\end{split}
\end{equation*}
which proves \eqref{MolIn5} in view of (\ref{norms}).
\end{proof}

The next result is a modification of Proposition \ref{OneStep} so we omit its proof.

\begin{proposition}\label{OneStepMod}
Given $v\in \mathcal{C}^3(\bar{\Omega})$, $w\in \mathcal{C}^2(\bar{\Omega},\R^2)$,
a nonnegative function $a\in \mathcal{C}^3(\bar{\Omega})$ and a unit
vector $\eta\in\R^2$, let $\delta, l\in(0,1)$ be two parameter constants satisfying:
\begin{equation}\label{DeltaLConditions}
 \|\nabla^m a\|_0\leq\frac{\delta}{l^m} \quad \mbox{ for } ~m =
 0\ldots 3\quad\mbox{and} \quad \|\nabla^{m+1} v\|_0\leq\frac{\delta}{l^m}
 \quad \mbox{ for } ~m = 1,2.  
\end{equation}
Then for any frequency $\lambda \geq 1/l$, the approximations $v_\lambda\in
\mathcal{C}^3(\bar{\Omega})$ and  $w_\lambda \in
\mathcal{C}^2(\bar{\Omega},\R^2)$  defined in (\ref{zzz}), satisfy:
\begin{equation}\label{OneStepMod1}
  \Big|\big(\frac{1}{2}\nabla v_\lambda\otimes\nabla v_\lambda + \mathrm{sym} \nabla
  w_\lambda\big)  -\big(\frac{1}{2}\nabla v\otimes\nabla v + \mathrm{sym} \nabla w + a^2 \eta\otimes\eta\big) \Big|
  \leq  \frac{\delta^2}{\lambda l},
\end{equation}
\begin{equation}\label{OneStepMod2}
\begin{split}  
&  \|v_\lambda-v\|_0 \leq (0.4)\frac{\delta}{\lambda}, \quad 
\|\nabla(v_\lambda-v)\|_0 \leq (2.4)\delta, \\ & \|\nabla^2(v_\lambda-v)\|_0 \leq (16.9)\delta\lambda, \quad 
\|\nabla^3(v_\lambda-v)\|_0 \leq (123)\delta\lambda^2\\
&  \qquad \|w_\lambda-w\|_0 \leq (0.4)\frac{\delta}{\lambda}(1+\|\nabla v\|_0), \quad 
\|\nabla(w_\lambda-w)\|_0 \leq (2.4)\delta (1+\|\nabla v\|_0), \\ & 
\qquad \|\nabla^2(w_\lambda-w)\|_0 \leq (21.9)\delta\lambda (1+\|\nabla v\|_0).
\end{split}
\end{equation}
\end{proposition}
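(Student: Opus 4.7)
The plan is to repeat the calculation of Proposition \ref{OneStep} with the benefit of the structural bounds (\ref{DeltaLConditions}), and then supplement it with two higher-order estimates not present there. The key observation is that (\ref{DeltaLConditions}) together with $\lambda \geq 1/l$ lets us replace every $|\nabla^m a|$ and every $|\nabla^{m+1}v|$ by $\delta/l^m$, and then trade each $1/l^m$ for $\lambda^m$. With this substitution, the pointwise bounds of Proposition \ref{OneStep} directly yield (\ref{OneStepMod1}) and the first three estimates in (\ref{OneStepMod2}); only a small computation of numerical constants is needed -- for example $3/(2\pi) + 1/(2\pi^2) < 1$ for (\ref{OneStepMod1}), and $1/\pi + 4 + 4\pi < 16.9$ for the bound on $\|\nabla^2(v_\lambda - v)\|_0$. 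In the $w$-estimates I would use $\delta < 1$ to collect the $\|\nabla v\|_0^0$ and $\|\nabla v\|_0^1$ contributions into the common factor $(1 + \|\nabla v\|_0)$.

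For the two estimates $\|\nabla^3(v_\lambda - v)\|_0$ and $\|\nabla^2(w_\lambda - w)\|_0$ not covered by Proposition \ref{OneStep}, I would differentiate (\ref{zzz}) directly. Writing $\tilde V(x) = V(x, \lambda x \cdot \eta)$ and $\tilde W(x)=W(x,\lambda x\cdot\eta)$, each derivative $\partial_t$ landing on the frozen composition contributes a factor of $\lambda$, so
\begin{equation*}
\|\nabla^3 \tilde V\|_0 \leq \|\nabla_x^3 V\|_0 + 3\lambda\|\nabla_x^2 \partial_t V\|_0 + 3\lambda^2\|\nabla_x\partial_t^2 V\|_0 + \lambda^3\|\partial_t^3 V\|_0.
\end{equation*}
Each term is computable from the explicit formula for $V$ in Lemma \ref{VW}, and bounding them via (\ref{DeltaLConditions}) gives $\|\nabla^3(v_\lambda - v)\|_0 = \lambda^{-1}\|\nabla^3\tilde V\|_0 \leq \delta\lambda^2(1/\pi + 6 + 12\pi + 8\pi^2) \leq 123\,\delta\lambda^2$. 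For $\nabla^2(w_\lambda - w)$ I would expand $\nabla^2$ of the two terms $-\tilde V\nabla v/\lambda$ and $\tilde W \eta/\lambda$ in (\ref{zzz}), using the analogous expansions for $\nabla^2\tilde V$ and $\nabla^2\tilde W$ together with the formulas for $\partial_t^2 W$, $\nabla_x\partial_t W$, $\nabla_x^2 W$ extracted from Lemma \ref{VW}. The leading contribution $\lambda^{-1}|\nabla^2 \tilde V||\nabla v|$ is controlled by $(1/\pi + 4 + 4\pi)\delta\lambda\|\nabla v\|_0 \approx 16.9\,\delta\lambda\|\nabla v\|_0$, and the remaining cross-terms sum up below a constant times $\delta\lambda(1 + \|\nabla v\|_0)$.

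The main obstacle is bookkeeping rather than anything conceptual. The expansion of $\nabla^2(w_\lambda - w)$ produces roughly a dozen cross-terms mixing $a, \nabla a, \nabla^2 a, \nabla v, \nabla^2 v, V$ and $W$ distributed over powers $\lambda^{-1}, \lambda^0, \lambda^1$, and grouping them so that every coefficient in the final bound is no larger than $21.9$ requires patient accounting, while simultaneously invoking $\delta<1$ and $1/l^m\leq \lambda^m$ to absorb subleading powers of $\delta$ and $1/l$ into the dominant term.
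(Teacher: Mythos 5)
The paper omits its own proof, remarking only that the result is a modification of Proposition \ref{OneStep}; your proposal supplies exactly that modification, and it is correct. Substituting the bounds (\ref{DeltaLConditions}) and $\lambda \geq 1/l$ into the pointwise estimates of Proposition \ref{OneStep}, together with the direct chain-rule expansion of $\nabla^3[V(x,\lambda x\cdot\eta)]$ and $\nabla^2(w_\lambda-w)$ for the two estimates not covered there, gives all seven bounds with the stated constants (your checks $3/(2\pi)+1/(2\pi^2)<1$, $1/\pi+4+4\pi<16.9$, $1/\pi+6+12\pi+8\pi^2<123$ are right, and the remaining cross-terms for $\nabla^2(w_\lambda-w)$ do sum below $21.9\,\delta\lambda(1+\|\nabla v\|_0)$ once $\delta<1$ and $1/l\leq\lambda$ are used to absorb subleading terms).
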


The following is the ``stage'' of the H\"older approximation
construction, consisting of iterating three convex integration steps
detailed in Proposition \ref{Stage2}, with an additional mollification at
each step in order to control the second derivative norms. 
The statement and the proof are similar to Proposition 5.2 in
\cite{LP}, but we avoid the extension argument and consequently make
the universal constants explicit to allow for a numerical
implementation. 

\begin{proposition}[Proposition 5.2 \cite{LP}]\label{Stage3}
For an open  bounded domain $\Omega \subset \R^2$, we denote:
$\Omega_r\doteq\Omega + B_r(0)$ for some $r\in (0,1)$. 
Given three functions $v\in \mathcal{C}^2(\bar{\Omega}_r)$, $w\in
\mathcal{C}^2(\bar{\Omega}_r,\R^2)$ and $A\in
\mathcal{C}^{0,\beta}(\bar{\Omega}_r,\R^{2\times 2}_{sym})$ of H\"older
regularity $\beta\in(0,1)$, assume that for $\delta_0< (5.4)\cdot 10^{-16}$ we have:
\begin{equation}\label{DefectSmall}
  D \doteq A-\big(\frac{1}{2}\nabla v\otimes\nabla v + \mathrm{sym} \nabla w\big), 
\quad 0<\|D\|_{\mathcal{C}^0(\bar\Omega_r)}\leq\delta_0.   
\end{equation}
Then, for every two constants $M, \sigma$ which satisfy:
\begin{equation}\label{MSigmaConditions}
  M>\max\Big\{\frac{\|D\|_{\mathcal{C}^0(\bar\Omega_r)}^{1/2}}{r},
\|\nabla^2 v\|_{\mathcal{C}^0(\bar\Omega_r)},\|\nabla^2
  w\|_{\mathcal{C}^0(\bar\Omega_r)},1\Big\} \quad\mbox{and} \quad\sigma>1,   
\end{equation}
there exist $\tilde{v}\in \mathcal{C}^2(\bar{\Omega})$ and
$\tilde{w}\in \mathcal{C}^2(\bar{\Omega},\R^2)$ such that with:
$\tilde D \doteq A-\big(\frac{1}{2}\nabla \tilde{v}\otimes\nabla
\tilde{v} + \mathrm{sym} \nabla \tilde{w}\big)$ we have:
\begin{equation}\label{DefectStage2}
  \|\tilde{D}\|_0 \leq
  \frac{\|A\|_{\mathcal{C}^{0,\beta}(\bar\Omega_r)}}{M^\beta}\|D\|_{\mathcal{C}^0(\bar\Omega_r)}^{\beta/2}
  + \frac{(1.9)10^{15}}{\sigma}\|D\|_{\mathcal{C}^0(\bar\Omega_r)}, 
\end{equation}
\begin{equation}\label{Norm0Est}
\begin{split}
\|\tilde{v}-&v\|_0  \leq  \frac{(1.8) 10^7}{M}\|D\|_{\mathcal{C}^0(\bar\Omega_r)}\\
& \mbox{and } \quad \|\tilde{w}-w\|_0\leq \Big(\frac{(1.8) 10^7}{M}+
(12.6)\mathrm{diam}(\Omega_r)\Big)\|D\|_{\mathcal{C}^0(\bar\Omega_r)}
(1+\|\nabla v_0\|_{\mathcal{C}^0(\bar\Omega_r)}),
\end{split}
\end{equation}
\begin{equation}\label{Norm1Est}
 \begin{split}
&\|\nabla(\tilde{v}-v)\|_0\leq
(1.1) 10^8\|D\|_{\mathcal{C}^0(\bar\Omega_r)}^{1/2} \\ & \qquad\qquad \qquad
\qquad \qquad \mbox{and}\quad
 \|\nabla(\tilde{w}-w)\|_0\leq (1.1)10^8(1+\|\nabla
 v\|_{\mathcal{C}^0(\bar\Omega_r)})\|D\|_{\mathcal{C}^0(\bar\Omega_r)}^{1/2},  
\end{split}
\end{equation}
\begin{equation}\label{Norm2Est}
  \|\nabla^2\tilde{v}\|_0\leq (7.3)10^8 M\sigma^3 \quad \mbox{and}\quad
  \|\nabla^2\tilde{w}\|_0\leq (9.5)10^8 (1+\|\nabla v\|_{\mathcal{C}^0(\bar\Omega_r)})M\sigma^3. 
\end{equation}
The norms $\|\cdot\|_0$ above signify $\|\cdot\|_{\mathcal{C}^0(\bar\Omega)}$.
\end{proposition}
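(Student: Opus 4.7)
The strategy is to perform a mollified version of the three-step convex integration scheme of Proposition \ref{Stage2}, where the mollification scale $l$, the corrugation frequencies $\lambda_1 < \lambda_2 < \lambda_3$, and the intermediate mollifications are all tied explicitly to the parameters $M$ and $\sigma$. The H\"older exponent $\beta$ enters only through the mollification error on $A$, whose estimate from Lemma \ref{MolliLem} will yield the first summand on the right-hand side of (\ref{DefectStage2}).

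First, choose $l\sim 1/M$ (small enough that $\Omega + B_l(0)\subset\Omega_r$, which is exactly why the hypothesis $M > \|D\|_{\mathcal{C}^0(\bar\Omega_r)}^{1/2}/r$ is imposed) and replace $A$ by $A\ast\varphi_l$, so that $\|A\ast\varphi_l - A\|_0\le l^\beta\|A\|_{\mathcal{C}^{0,\beta}(\bar\Omega_r)}$ via Lemma \ref{MolliLem}. To make the Proposition \ref{Stage2} template applicable I would then add the linear correction $-\|D\|_0\bigl(\tfrac{\sqrt{2}+9}{4}x_1,\,(\sqrt{2}+\tfrac{9}{5})x_2\bigr)$ to $w$ as in step 2 of the proof of Theorem \ref{Approx2}; by Lemma \ref{CoeffFormula}(iv) the resulting modified defect decomposes as $\sum_k\phi_k\eta_k\otimes\eta_k$ with $\phi_k\ge\tfrac12\|D\|_0$ pointwise, at the cost of an additive $(12.6)\,\mathrm{diam}(\Omega_r)\,\|D\|_0(1+\|\nabla v\|_0)$ contribution to $\|\tilde w-w\|_0$, which accounts for the corresponding summand in (\ref{Norm0Est}).

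Next I would iterate three corrugations. For each $k=1,2,3$, set the amplitude $a_k=\phi_k^{1/2}$ from the current defect decomposition and apply Proposition \ref{OneStepMod} in direction $\eta_k$ at frequency $\lambda_k = M\sigma^k$, with amplitude budget $\delta\sim\|D\|_0^{1/2}$ chosen to dominate $\|\nabla v\|_0$, $l_k\|\nabla a_k\|_0$, and $l_k^2\|\nabla^2 v\|_0$ (each of order $\|D\|_0^{1/2}$ after mollification). Between consecutive steps I would mollify the output at scale $l_k=l/\sigma^{k-1}$ to restore (\ref{DeltaLConditions}) for the next step, absorbing the commutator errors via (\ref{MolIn5}). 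Since (\ref{OneStepMod1}) gives a corrugation defect of order $\|D\|_0/\sigma$ per step, summing the three corrugation errors with the mollification error of $A$ reproduces (\ref{DefectStage2}). Telescoping (\ref{OneStepMod2}) across the three corrugations then gives (\ref{Norm0Est})--(\ref{Norm2Est}): the $\mathcal{C}^0$ bounds scale like $\delta/\lambda_1\sim \|D\|_0/M$, the $\mathcal{C}^1$ bounds like $\delta\sim \|D\|_0^{1/2}$, and the $\mathcal{C}^2$ bounds pick up the largest frequency $\lambda_3=M\sigma^3$.

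The main obstacle is the simultaneous calibration of $l$, $\{\lambda_k\}$, and the intermediate mollification scales so that $\Omega_r$ accommodates all convolution footprints, (\ref{DeltaLConditions}) survives from one step to the next, the Lemma \ref{CoeffFormula}(iv) positive-decomposition property persists after each corrugation, and the explicit multiplicative constants from Lemma \ref{MolliLem} and from three compositions of Proposition \ref{OneStepMod} combine to the concrete prefactors in (\ref{DefectStage2})--(\ref{Norm2Est}). The smallness hypothesis $\delta_0 < (5.4)\cdot 10^{-16}$ is precisely what allows these prefactors to be absorbed, so I would fix it last, after all other parameters are in place.
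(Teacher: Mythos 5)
Your high-level skeleton (mollify, translate $w$ by a linear map to force the positive decomposition via Lemma \ref{CoeffFormula}(iv), then iterate three directional corrugations via Proposition \ref{OneStepMod}) matches the paper. But there are two genuine problems in the middle step, and the first one is fatal as stated.

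The decisive divergence is your plan to \emph{re-mollify between corrugation steps} ``at scale $l_k = l/\sigma^{k-1}$ to restore (\ref{DeltaLConditions})''. The paper mollifies \emph{only once}, at the start of the stage, and never again; the constraints (\ref{DeltaLConditions}) are kept alive across the three steps not by re-smoothing but by letting the tolerance parameter grow, $\delta_{k+1}=124\,\delta_k$, while $l_{k+1}=l_k/\sigma$ and $\lambda_k=1/l_{k+1}$, and checking by induction that $\|\nabla^{m+1}v_k\|_0\le\delta_{k+1}/l_{k+1}^m$. Your re-mollification would actually destroy the estimate (\ref{DefectStage2}). Indeed, after the first corrugation $\|\nabla^2 v_1\|_0\sim\delta_1\lambda_1\sim M\sigma$ (with a constant of order a few thousand from (\ref{farfar}) and $\delta_1\le 2778.7\,\|D\|^{1/2}$), and mollifying $v_1$ at scale $l_2=l/\sigma=\|D\|^{1/2}/(M\sigma)$ injects a commutator error in the defect of size
\begin{equation*}
l_2^2\,\|\nabla^2 v_1\|_0^2 \sim \frac{\|D\|}{M^2\sigma^2}\cdot C^2 M^2\sigma^2 = C^2\|D\|_0,
\end{equation*}
with $C$ a large absolute constant and with no $1/\sigma$ gain. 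This is the same order as the starting defect, so the claimed reduction to $\sim\|A\|/M^\beta\cdot\|D\|^{\beta/2}+(1.9)10^{15}\|D\|/\sigma$ cannot survive an intermediate mollification. Pushing the intermediate mollification scale down to kill the commutator forces $\|\nabla^3\|$ up and breaks (\ref{DeltaLConditions}) at the next step, so this cannot be fixed by tuning. The single-initial-mollification plus growing-$\delta_k$ bookkeeping is not cosmetic --- it is what makes the estimate close.

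Second, and more minor: you write $l\sim 1/M$ and $\lambda_k=M\sigma^k$; the paper takes $l=\|D\|_{\mathcal{C}^0(\bar\Omega_r)}^{1/2}/M$ (so that $M>\|D\|^{1/2}/r$ gives exactly $l<r$, which is why that term sits in (\ref{MSigmaConditions})) and $\lambda_k=1/l_{k+1}=\sigma^k/l=M\sigma^k/\|D\|^{1/2}$. With your $\lambda_1=M\sigma$ and $\delta\sim\|D\|^{1/2}$ you would get $\delta/\lambda_1\sim\|D\|^{1/2}/(M\sigma)$, not $\|D\|_0/M$ as you assert; the correct frequency restores the scaling that matches (\ref{Norm0Est}). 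Similarly, the paper translates $w$ by $-(\|D\|_{\mathcal{C}^0(\bar\Omega_r)}+\|\mathfrak{D}\|_0)$ (not just $\|D\|_0$), because after mollification one only controls $\|\mathfrak{D}\|_0\le 2\|D\|_0$ and needs the translation large enough to make Lemma \ref{CoeffFormula}(iv) apply to $\mathfrak{D}$, not to $D$.
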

\begin{proof}
The proof proceeds in three parts: mollification of $v, w, A$ to control higher
derivatives, modification of $w$ to ensure the positive
decomposition of the defect in the basis
$\{\eta_k\otimes\eta_k\}_{k=1}^3$, and application of three consecutive steps of
convex integration to reduce the defect.

\smallskip

{\bf 1. Mollification.}
For all $l<r$, define the following functions on $\bar\Omega$ through
mollification with the standard kernel $\varphi$ as in \eqref{Mollifier}:
\begin{equation*}
  \mathfrak{v} \doteq  v \ast \varphi_l, \quad \mathfrak{w} \doteq
  w \ast \varphi_l, \quad \mathfrak{A} \doteq A \ast \varphi_l, \quad \mbox{where}
  \quad l \doteq \frac{\|D\|_{\mathcal{C}^0(\bar\Omega_r)}^{1/2}}{M}<r<1. 
\end{equation*}
Denote: $\mathfrak{D}\doteq\mathfrak{A}-\left(\frac{1}{2}\nabla
  \mathfrak{v}\otimes\nabla \mathfrak{v} + \mathrm{sym} \nabla
  \mathfrak{w}\right)$.  
We use Lemma \ref{MolliLem} and assumption (\ref{MSigmaConditions}) to
obtain the uniform error bounds below, where the relevant norms of
quantities $\mathfrak{v}, \mathfrak{w}, \mathfrak{A}, \mathfrak{D}$
are taken on $\bar\Omega$, while other norms are taken on the superset $\bar\Omega_r$:
\begin{equation}\label{MolliResult}
\begin{split}
&  \|\mathfrak{v} -  v\|_0, ~\|\mathfrak{w} - w\|_0  \leq
 \frac{l}{2}\|D\|_{\mathcal{C}^0(\bar\Omega_r)}^{1/2}, \quad 
\|\nabla(\mathfrak{v} -  v)\|_0, ~ \|\nabla(\mathfrak{w} - 
w)\|_0 \leq \|D\|_{\mathcal{C}^0(\bar\Omega_r)}^{1/2},\\
&  \|\mathfrak{A} - A\|_0  \leq l^\beta[A]_{\beta, \bar\Omega_r},\\
&  \|\nabla ^m \mathfrak{D}\|_0 \leq\|\nabla^m(
D\ast\varphi_l)\|_0 + \frac{1}{2}\|\nabla^m\big((\nabla
  v\ast\varphi_l)\otimes(\nabla v\ast\varphi_l) - (\nabla
  v\otimes\nabla v)\ast\varphi_l\big)\|_0.
\end{split}
\end{equation}
In view of (\ref{norms}), (\ref{MolIn5}) and (\ref{MSigmaConditions}),
the last bound above is specified to:
\begin{equation}\label{NormDMol}
\begin{split} 
& \|\mathfrak{D}\|_0\leq \| D\|_0 + l^2[\nabla v]_1^2 \leq 2\|D\|_{\mathcal{C}^0(\bar\Omega_r)},\\ 
& \|\nabla \mathfrak{D}\|_0\leq \frac{3.1}{l }\|D\|_0 + (4.7)
l[\nabla  v]_1^2 \leq (7.8)\frac{1}{l} \|D\|_{\mathcal{C}^0(\bar\Omega_r)},\\ 
& \|\nabla^2 \mathfrak{D}\|_0\leq \frac{15.9}{l^2}\| D\|_0 + (33.5)
[\nabla  v]_1^2 \leq (49.4)\frac{1}{l^2} \|D\|_{\mathcal{C}^0(\bar\Omega_r)},\\ 
& \|\nabla^3 \mathfrak{D}\|_0\leq \frac{210}{l^3}\| D\|_0 + (462.9)
\frac{1}{l}[\nabla v]_1^2 \leq (672.9)\frac{1}{l^3} \|D\|_{\mathcal{C}^0(\bar\Omega_r)}.
\end{split}
\end{equation}
We also get, by \eqref{MolIn1}:
\begin{equation}\label{controlvnorms}
\begin{split}
& \|\nabla^2 \mathfrak{v}\|_0 \leq \|\nabla^2 v\|_{\mathcal{C}^0(\bar\Omega_r)} \leq
\frac{1}{l}\|D\|_{\mathcal{C}^0(\bar\Omega_r)}^{1/2}, \\ & \|\nabla^3 \mathfrak{v}\|_0
\leq \frac{1}{l}\|\nabla\varphi\|_{L^1(\R^2)} \|\nabla^2 v\|_{\mathcal{C}^0(\bar\Omega_r)} \leq
(3.1)\frac{1}{l^2}\|D\|_{\mathcal{C}^0(\bar\Omega_r)}^{1/2}.
\end{split}
\end{equation}
Finally,  Lemma \ref{MolliLem} yields:
\begin{equation}\label{controlwnorms}
  \|\nabla^2 \mathfrak{w}\|_{0}\leq \|\nabla^2 w\|_{\mathcal{C}^0(\bar\Omega_r)}\leq M.  
\end{equation}
 
\smallskip

{\bf 2. Modification and decomposition.}
To ensure that the deficit may be decomposed with positive coefficients, we define:
\begin{equation*}
\tilde{\mathfrak{w}} \doteq \mathfrak{w} - (\|D\|_{\mathcal{C}^0(\bar\Omega_r)}+\|\mathfrak{D}\|_0)
\Big( \frac{\sqrt{2}+9}{4} x, (\sqrt{2}+\frac{9}{5}) y\Big),
 \qquad \tilde{\mathfrak{D}} \doteq \mathfrak{A} - \big(\frac{1}{2}\nabla
    \mathfrak{v}\otimes\nabla \mathfrak{v} + \mathrm{sym} \nabla
  \tilde{\mathfrak{w}}\big).  
\end{equation*}
By \eqref{MolliResult} there follow the bounds:
\begin{equation}\label{wModEst}
\begin{split} 
& \|\nabla(\tilde{\mathfrak{w}} -\mathfrak{w})\|_0  \leq (4.2) (\|
D\|_{\mathcal{C}^0(\bar\Omega_r)}+\|\mathfrak{D}\|_0)\leq (12.6)\| D\|_{\mathcal{C}^0(\bar\Omega_r)}, \\
& \|\nabla^2(\tilde{\mathfrak{w}} - \mathfrak{w})\|_0 = 0.
\end{split}
\end{equation}
Note that $\tilde{\mathfrak{D}} -\mathfrak{D} = \big(
\|D\|_{\mathcal{C}^0(\bar\Omega_r)} + \|\mathfrak{D}\|_0\big)
\mbox{diag}\big(\frac{\sqrt{2}+9}{4}, \sqrt{2}+\frac{9}{5}\big)$ and therefore:
$\nabla^m \tilde{\mathfrak{D}} = \nabla^m \mathfrak{D}$ for all $m\geq 1$.
Further, this construction guarantees that in the decomposition
$\tilde{\mathfrak{D}} = \sum_{k=1}^3\phi_k\eta_k\otimes\eta_k$ on
$\bar\Omega$, in view of  Lemma \ref{CoeffFormula} (iv) we get: $\phi_k\geq (\|
D\|_{\mathcal{C}^0(\bar\Omega_r)}+\|\mathfrak{D}\|_0)/2$.
We now find the bounds on the first norms of the smooth
positive functions $a_k \doteq \sqrt{\phi_k}$. We
begin by noting that $\min_{x\in\bar\Omega} a_k(x) \geq \frac{\|D\|_{\mathcal{C}^0(\bar\Omega_r)}^{1/2}}{\sqrt{2}}$.  
In view of \eqref{wModEst}, \eqref{NormDMol} and since $\nabla
a_k=\frac{\nabla\phi_k}{2a_k}$ and $\nabla^2 a_k =
\frac{\nabla^2\phi_k}{2a_k} - \frac{\nabla a_k\otimes \nabla
  a_k}{a_k}$, Lemma \ref{CoeffFormula} yields:
\begin{equation}\label{farfar}
\begin{split}
\|a_k\|_0 & \leq
\Big(\frac{5\sqrt{3}}{8}\|\tilde{\mathfrak{D}}\|_0\Big)^{1/2} \leq
(4.1)\|\bar D\|_{\mathcal{C}^0(\bar\Omega_r)}^{1/2},\\
\|\nabla a_k\|_0 & \leq\frac{\|\nabla\phi_k\|_0}{2\;\displaystyle{\min_{x\in\bar\Omega}
    a_k(x)}} \leq \frac{5\sqrt{3}}{8\sqrt{2}}\frac{\|\nabla
  \tilde{\mathfrak{D}}\|_0}{\|D\|_{\mathcal{C}^0(\bar\Omega_r)}^{1/2}}
\leq 6 \frac{1}{l}\| D\|_{\mathcal{C}^0(\bar\Omega_r)}^{1/2},\\  
\|\nabla^2 a_k\|_0 & \leq \frac{\|\nabla^2\phi_k\|_0 + 2\|\nabla
     a_k\|_0^2}{2\;\displaystyle{\min_{x\in\bar\Omega} a_k(x)}}\leq \frac{5\sqrt{3}}{8\sqrt{2}}\frac{\|\nabla^2
  \tilde{\mathfrak{D}}\|_0}{\| D\|_{\mathcal{C}^0(\bar\Omega_r)}^{1/2}} +
\sqrt{2}\frac{\|\nabla
  a_k\|_0^2}{\|D\|_{\mathcal{C}^0(\bar\Omega_r)}^{1/2}} \leq (88.8)\frac{1}{l^2}\| D\|_0^{1/2},\\
\|\nabla^3 a_k\|_0 & \leq \frac{\|\nabla^3\phi_k\|_0 + 6\|\nabla  a_k\|_0\|\nabla^2
     a_k\|_0}{2\;\displaystyle{\min_{x\in\bar\Omega} a_k(x)}}\leq 
\frac{5\sqrt{3}}{8\sqrt{2}}\frac{\|\nabla^3
  \tilde{\mathfrak{D}}\|_0}{\| D\|_{\mathcal{C}^0(\bar\Omega_r)}^{1/2}} +
3\sqrt{2}\frac{\|\nabla
  a_k\|_0\|\nabla^2a_k\|_0}{\|D\|_{\mathcal{C}^0(\bar\Omega_r)}^{1/2}}
\\ & \leq (2775.6)\frac{1}{l^3}\|D\|_{\mathcal{C}^0(\bar\Omega_r)}^{1/2}.
\end{split} 
\end{equation}

\smallskip
 
{\bf 3. Iteration of convex integration.}
We set $v_0 \doteq \mathfrak{v}$, $w_0 \doteq \tilde{\mathfrak{w}}$ restricted to
$\bar \Omega$ and then define recursively $v_k\in \mathcal{C}^3(\bar{\Omega})$, $w_k\in
\mathcal{C}^2(\bar{\Omega},\R^2)$ for $k = 1,2,3$ by applying Proposition \ref{OneStepMod}.
We apply it to $v_{k-1}$, $w_{k-1}$, with $a_{k}$ from the decomposition of
$\tilde{\mathfrak{D}}$ in the basis given by $\{\eta_k\}_{k=1}^3$. Lastly,
we set the parameters: 
\begin{equation*}
l_k \doteq \frac{l}{\sigma^{k-1}}<1 , \quad \lambda_k \doteq \frac{1}{l_{k+1}} > \frac{1}{l_k},
\end{equation*}
and the non-decreasing triple $\{\delta_k\}_{k=1}^3$ with the initial choice:
\begin{equation}\label{DeltaChoice}
\delta_1 \doteq \max_{m=1,2}\left\{l^m\|\nabla^{m+1} \mathfrak{v}\|_0\right\}+
\max_{m=0\ldots3, ~k=1\ldots3}\left\{l^m\|\nabla^m a_k\|_0\right\}. 
\end{equation}
The construction is complete by setting $\tilde{v} \doteq v_3$ and
$\tilde{w} \doteq w_3$, and claiming that these satisfy the error
bounds (\ref{DefectStage2})-(\ref{Norm2Est}). 

First, we check that the assumptions of Proposition \ref{OneStepMod} hold at every step.  
The condition that $l_k\in(0,1)$ is easily verified as $l<1$ and $\sigma>1$. 
By (\ref{farfar}) we get $l^m\|\nabla^{m+1} \mathfrak{v}\|_0\leq
(3.1)\| D\|_{\mathcal{C}^0(\bar\Omega_r)}^{1/2}$  for $m=1,2$
and $l^m\|\nabla^m a_k\|_0\leq (2775.6)\| D\|_{\mathcal{C}^0(\bar\Omega_r)}^{1/2}$ for
$m,k=0\ldots 3$, which yield:
$$\delta_1 \leq (2778.7)\| D\|_{\mathcal{C}^0(\bar\Omega_r)}^{1/2}<1$$
by (\ref{DefectSmall}). The first condition in \eqref{DeltaLConditions} is clearly satisfied
for all $k$ by \eqref{DeltaChoice}, given that $l_k<l$ and
$\delta_1\leq \delta_k$. Further, by induction on $k$ and using
\eqref{OneStepMod2}, we obtain:  
\begin{equation*}
\begin{split}
  \|\nabla^{m+1} v_k\|_0 &\leq\|\nabla^{m+1} v_{k-1}\|_0 + \|\nabla^{m+1} v_k-\nabla^{m+1} v_{k-1}\|_0
  \leq\frac{\delta_k}{l^m_k}+ (123)\delta_k\lambda_k^m\\ 
  &\leq\delta_k\frac{124}{l_{k+1}^m}\leq\frac{\delta_{k+1}}{l_{k+1}^m}
  \qquad \mbox{for all } ~ m,k=1,2,
\end{split}
\end{equation*}
if only $\delta_{k+1} = (124)\delta_k$ for $k=1,2$. 
This ensures the second condition in \eqref{DeltaLConditions} in view of
(\ref{DeltaChoice}), provided that $(2778.7)\cdot
(124)^2\delta_0^{1/2}<1$ to get $\delta_1, \delta_2, \delta_3<1$. This
last inequality is implied by the original bound on $\delta_0$.
We omit the verification of estimates \eqref{DefectStage2},
(\ref{Norm0Est}) (note that here we use the condition $0\in\Omega$),
\eqref{Norm1Est} and \eqref{Norm2Est} as they follow directly as in \cite{LP}.
\end{proof}
 
\section{A proof of Theorem \ref{weakMA}}\label{sec5}

We start by showing the main approximation result needed
in Theorem \ref{weakMA}, that is a version of the result in Theorem
\ref{w2oi-hld}. It consists of iterating the ``stages'' construction,
with the sole restrictive assumption on the  smallness of the initial deficit.
 
\begin{proposition}\label{prop2.4.4}
Let $\Omega\subset\Omega_r\subset\mathbb{R}^2$ be open bounded sets,
where $\Omega_r = \Omega+ B_r(0)$ for $r>0$. Let: 
$$\delta_0 < \min\Big\{\frac{r}{2}, (5.4) 10^{-16}\Big\}.$$ 
Given three functions: $v\in \mathcal{C}^2(\bar\Omega_r)$, $w\in\mathcal{C}^2(\bar
\Omega_r,\mathbb{R}^2)$ and $A\in\mathcal{C}^{0, \beta} (\bar\Omega_r,
\mathbb{R}^{2\times 2}_{sym})$ of H\"older regularity $\beta\in
(0,1)$, assume that:
$$D\doteq A - \big(\frac{1}{2}\nabla v\otimes \nabla v +
\mathrm{sym}\nabla w\big), \qquad 0< \|D\|_{\mathcal{C}^0(\bar\Omega_r)}\leq\delta_0.$$
Then, for every exponent
$ \alpha\in (0,\min\big\{ \frac{1}{7},\frac{\beta}{2} \big\})$ one can find 
$\bar{v}\in \mathcal{C}^{1,\alpha}(\bar{\Omega})$, 
$\bar{w}\in \mathcal{C}^{1,\alpha}(\bar{\Omega},\R^2)$ such that:
$\frac{1}{2}\nabla\bar{v}\otimes\nabla\bar{v} + \mathrm{sym}\nabla \bar{w}=A$, and:
\begin{equation*}
\|v-\bar{v}\|_0<(0.21)\delta_0, \qquad \|w-\bar{w}\|_0<\big(0.5 +
(63.4)\cdot \mathrm{diam}\,\Omega_r\big)(1+\|\nabla v_0\|_{\mathcal{C}^0(\bar\Omega_r)})\delta_0.
\end{equation*}
\end{proposition}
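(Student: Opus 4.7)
The plan is to iterate Proposition~\ref{Stage3} infinitely many times, producing a sequence of smooth subsolutions $(v_n, w_n)_{n\geq 0}$ on a nested family of domains $\Omega \subset \cdots \subset \Omega_{r_{n+1}} \subset \Omega_{r_n} \subset \Omega_{r_0} = \Omega_r$, with buffer widths $r_n - r_{n+1}$ shrinking in a controlled way. Starting from $(v_0, w_0) = (v, w)$ with defect $D_0 = D$, each stage produces the next iterate on a slightly smaller set, with a geometrically smaller defect. The limits $\bar v = \lim_n v_n$ and $\bar w = \lim_n w_n$, taken on the innermost set $\Omega$, will yield the desired exact solution in $\mathcal{C}^{1,\alpha}$.

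The heart of the argument is the parameter design at each stage. Fix $K > 1$ and aim for the geometric decay $\|D_n\|_{\mathcal{C}^0(\bar\Omega_{r_n})} \leq \delta_0 K^{-n}$. By \eqref{DefectStage2}, this is secured by choosing
\begin{equation*}
M_n \sim \|D_n\|_0^{\,1/2 - 1/\beta} \sim K^{n(1/\beta - 1/2)},
\end{equation*}
together with $\sigma_n$ a sufficiently large absolute constant. The buffer widths are picked geometrically summable (say $r_n - r_{n+1} = r/2^{n+1}$) so that the condition $M_n > \|D_n\|_0^{1/2}/(r_n-r_{n+1})$ in \eqref{MSigmaConditions} is maintained once $\delta_0$ is small enough. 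An induction using \eqref{Norm2Est} then shows that the prior-stage bound $\|\nabla^2 v_n\|_0,\|\nabla^2 w_n\|_0 \lesssim M_{n-1}\sigma^3$ is also compatible with the new threshold on $M_n$, provided $K$ is chosen larger than $\sigma^{3\beta/(2-\beta)}$.

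With these choices, \eqref{Norm0Est}--\eqref{Norm2Est} yield the key decays
\begin{equation*}
\|\nabla(v_{n+1}-v_n)\|_0 \lesssim \|D_n\|_0^{1/2} \lesssim K^{-n/2}, \qquad \|\nabla^2 v_{n+1}\|_0 \lesssim K^{n(1/\beta - 1/2)},
\end{equation*}
and analogous bounds for $w_n$. The $\mathcal{C}^1$-convergence of both sequences is then immediate. For the H\"older seminorm, the classical interpolation inequality $[\nabla f]_\alpha \leq C \|\nabla f\|_0^{1-\alpha}\|\nabla^2 f\|_0^\alpha$ combined with the above gives
\begin{equation*}
[\nabla(v_{n+1} - v_n)]_\alpha \lesssim K^{-n(1/2 - \alpha/\beta)},
\end{equation*}
which is summable precisely when $\alpha < \beta/2$; the additional threshold $\alpha < 1/7$ arises from the three-convex-integration-substep architecture inside a single stage, already analyzed in \cite{LP}. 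Since $\|D_n\|_0 \to 0$ uniformly on $\Omega$, the $\mathcal{C}^{1,\alpha}$-limit $(\bar v, \bar w)$ solves $\frac{1}{2}\nabla\bar v\otimes\nabla\bar v + \mathrm{sym}\nabla\bar w = A$ on $\bar\Omega$.

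The main obstacle will be tracking the explicit numerical constants $(0.21)$ and $\bigl(0.5 + (63.4)\,\mathrm{diam}\,\Omega_r\bigr)$ in the $\mathcal{C}^0$-bounds. These follow from summing the stage-wise increments given by \eqref{Norm0Est}. For $\|\bar v - v\|_0$, the series $\sum_n (1.8)\cdot 10^{7}\|D_n\|_0/M_n$ converges very rapidly because both $\|D_n\|_0$ and $1/M_n$ decay geometrically; the smallness hypothesis $\delta_0 < (5.4)\cdot 10^{-16}$ is precisely what makes the total bounded by $(0.21)\delta_0$. For $\|\bar w - w\|_0$, the accumulated contribution of the $(12.6)\,\mathrm{diam}(\Omega_{r_n})$ factor across all stages gives a geometric sum of the form $(12.6)\,\mathrm{diam}(\Omega_r)\sum_n K^{-n}$, whose value with the appropriate ratio $K$ produces the coefficient $(63.4)$, while the $(1.8)\cdot 10^7/M_n$ tail contributes the additive $(0.5)$ term. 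The factor $(1+\|\nabla v_0\|_0)$ is preserved throughout since each $\|\nabla v_n\|_0$ remains within a bounded multiple of $\|\nabla v_0\|_0$ by the $\mathcal{C}^1$-Cauchy estimate.
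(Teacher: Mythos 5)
Your overall plan is the paper's: iterate Proposition~\ref{Stage3} on a nested family of domains, force geometric decay of $\|D_n\|$, and interpolate between $\mathcal{C}^1$ and $\mathcal{C}^2$ to get $\mathcal{C}^{1,\alpha}$-convergence. The parameter design, however, does not close for general $\beta\in(0,1)$. You tie $M_n\sim\|D_n\|^{1/2-1/\beta}\sim K^{n(1/\beta-1/2)}$, which is just large enough to control the first term in~\eqref{DefectStage2}. But~\eqref{MSigmaConditions} also requires $M_n\geq\|\nabla^2 v_n\|_0$, and~\eqref{Norm2Est} yields $\|\nabla^2 v_n\|_0\lesssim M_{n-1}\sigma^3$; thus $M_n/M_{n-1}=K^{1/\beta-1/2}$ must dominate $\sigma^3$, i.e.\ $K\geq\sigma^{6\beta/(2-\beta)}$ (your exponent $3\beta/(2-\beta)$ is off by a factor of two). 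Separately, absorbing the second term in~\eqref{DefectStage2} at rate $K^{-1}$ requires $\sigma\geq 2\cdot(1.9)10^{15}K$. Combining, $K\geq(cK)^{6\beta/(2-\beta)}$ with $c\sim 10^{15}$, solvable only when $6\beta/(2-\beta)<1$, i.e.\ $\beta<2/7$. For larger $\beta$ the induction breaks. The paper avoids this circularity by \emph{decoupling} $M_k$ from the defect: it grows $M_k$ by the factor $\mathfrak{C}\sigma_{k-1}^3$ per stage, with $\mathfrak{C}=(20.9)10^8(1+\|\nabla v_0\|_0)$ chosen to exceed $(7.3)10^8$ precisely to swallow~\eqref{Norm2Est}, while the defect decays at the independent, slower rate $\prod_j\sigma_j^s$ for an auxiliary exponent $s\in\big(\tfrac{6\alpha}{1-\alpha},\tfrac{6\beta}{2-\beta}\big)$. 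Since $s<\tfrac{6\beta}{2-\beta}$ gives $s(\tfrac1\beta-\tfrac12)<3$, the second-derivative constraint dominates the defect-decay constraint on $M_k$, and both are satisfied without contradiction.

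This decoupling is also exactly where the threshold $\alpha<1/7$ comes from, which you defer to ``the three-substep architecture'' instead of deriving. With $M_k\sim(\mathfrak{C}\sigma^3)^k$ and $\|D_k\|\sim\prod_j\sigma_j^{-s}$, the interpolated increment is $[\nabla(v_{k+1}-v_k)]_\alpha\lesssim\|D_k\|^{(1-\alpha)/2}M_{k+1}^\alpha\sim\big(\mathfrak{C}^\alpha\sigma^{3\alpha-s(1-\alpha)/2}\big)^k$, summable iff $s>\tfrac{6\alpha}{1-\alpha}$; together with $s<1$ (forced by the requirement $\sigma_k^{1-s}>(3.7)10^{15}$ in~\eqref{sigma}, which plays the role your $\sigma\geq cK$ plays) this gives $\alpha<1/7$. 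Your interpolation estimate $[\nabla(v_{n+1}-v_n)]_\alpha\lesssim K^{-n(1/2-\alpha/\beta)}$, based on the undersized $M_n$, keeps only a bounded $\sigma^{3\alpha}$ factor rather than the growing $\sigma^{3n\alpha}$, and therefore never sees the $1/7$ cutoff. Finally, your sketch of the $\mathcal{C}^0$ constants is in the right spirit---the $(0.21)\delta_0$ is the first term $(1.8)10^7\delta_0/M_0\leq(0.9)10^7\delta_0^{3/2}$ enforced by $\delta_0<(5.4)10^{-16}$ and $M_0\geq2\delta_0^{-1/2}$, and $(63.4)$ is $(2.2)\cdot(12.6)\cdot\tfrac{16}{7}$ from $\sigma_k^s\geq\tfrac{16}{9}$---but these cannot be confirmed until the parameter design is repaired.
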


\begin{proof}
{\bf 1.} Define a decreasing sequence of open domains
$\{\Omega^k\}_{k=0}^\infty$ by setting:
$$\Omega^0\doteq \Omega_r,\qquad \Omega^k \doteq
\Omega_{r-\delta_0\sum_{i=1}^k 2^{-i}}~~\mbox{ for } k\geq 1,$$
so that: $\Omega^k = \Omega^{k+1}+B_{2^{-(k+1)}\delta_0}(0)$. Let
$v_0\doteq v$ and $w_0\doteq w$ on $\bar\Omega^0$. Given $k\geq 0$ and
$v_k\in\mathcal{C}^2(\bar\Omega^k)$,
$w_k\in\mathcal{C}^2(\bar\Omega^k,\mathbb{R}^2)$, resulting in the
nonzero deficit:
$$D_k\doteq A - \big( \frac{1}{2}\nabla v_k\otimes \nabla v_k +
\mathrm{sym}\,\nabla w_k),$$
we will construct the functions $v_{k+1}\in\mathcal{C}^2(\bar\Omega^{k+1})$,
$w_{k+1}\in\mathcal{C}^2(\bar\Omega^{k+1},\mathbb{R}^2)$ 
by applying Proposition \ref{Stage3} to the sets $\Omega^{k+1}\subset\Omega^k$, the matrix
field $A_{\mid \bar\Omega^k}$ and the parameters $\sigma_k, M_k$
chosen according to the procedure indicated below.
First, choose the exponent $s\in (0,1)$ to satisfy:
\begin{equation}\label{s}
\frac{6\alpha}{1-\alpha}<s<\frac{6\beta}{2-\beta}, 
\end{equation}
Existence of such $s$ in guaranteed by $\alpha\in (0,\min\big\{
\frac{1}{7},\frac{\beta}{2} \big\})$. Second, set the constant: 
\begin{equation}\label{C}
\mathfrak{C}\doteq (20.9) 10^8 \big(1+\|\nabla
v_0\|_{\mathcal{C}(\bar\Omega^0)}\big)
\end{equation}
and let $\{\sigma_k\}_{k=0}^\infty$ be an increasing sequence of positive
numbers, converging to some $\sigma_{max}$ and satisfying:
\begin{equation}\label{sigma}
\sigma_k^s\geq \frac{16}{9}, \quad \sigma_k^{1-s}>(3.7) 10^{15} \quad \mbox{
  for all } k\geq 0 \qquad \mbox{and} \qquad (\sigma_{max})^{\frac{s}{2}(1-\alpha)-3\alpha}>\mathfrak{C}^\alpha.
\end{equation}
We note that it is enough to take $\sigma_k=\sigma_{max}$ for all $k$,
but having $\sigma_k$ as small as possible is advantageous for the
numerical calculations. Third, the constants $\{M_k\}_{k=0}^\infty$ are defined by:
\begin{equation}\label{M}
\begin{split}
& M_{k}\doteq M_0 \mathfrak{C}^k \prod_{j=0}^{k-1}\sigma_j^3 \qquad
\mbox{ for all } k\geq 1 \\ & \mbox{and} \qquad M_0\doteq\left\{\begin{array}{ll}
N 2^{\frac{1}{\beta}} (\sigma_{max})^{\frac{1}{\beta}} \|A\|_{\mathcal{C}^{0,\beta}(\bar\Omega^0)}^{\frac{1}{\beta}}
\|D_0\|_{\mathcal{C}^{0,\beta}(\bar\Omega^0)}^{\frac{1}{2} -
  \frac{1}{\beta}} &\mbox{if } A\neq 0\\
N &\mbox{if } A = 0,
\end{array}\right. 
\end{split}
\end{equation}
for a large constant $N\geq 1$ below (that is evaluated numerically in the
following implementation). In par\-ti\-cu\-lar, there holds:
\begin{equation}\label{MM}
M_0\geq {2}{\delta_0^{-\frac{1}{2}}}.
\end{equation}

\smallskip

{\bf 2.} We now inductively prove that:
\begin{equation}\label{D}
\|D_{k}\|_{\mathcal{C}^0(\bar\Omega^{k})}\leq
\frac{\|D_0\|_{\mathcal{C}^0(\bar\Omega^{0)}}}{\prod_{j=0}^{k-1}\sigma_j^s}
  \qquad \mbox{for all } k\geq 1.
\end{equation}
For $k=1$, Proposition \ref{Stage3} gives:
$$\frac{\|D_{1}\|_{\mathcal{C}^0(\bar\Omega^{1})}}{
\|D_0\|_{\mathcal{C}^0(\bar\Omega^{0})}} \leq \frac{
\|A\|_{\mathcal{C}^{0, \beta}(\bar\Omega^{0})} \|D_0\|_{\mathcal{C}^0(\bar\Omega^0)}^{\frac{\beta}{2}-1}}{M_0^\beta}
+ \frac{(1.9)10^{15}}{\sigma_0}.$$
The second term in the right hand side above is majored by
$\frac{1}{2\sigma_0^s}$ by (\ref{sigma}), whereas the first term is also
bounded by: $\frac{1}{2 N^\beta\sigma_0^s}\leq \frac{1}{2\sigma_0^s}$
in view of (\ref{M}). For $k\geq 1$, we similarly observe that:
$$\frac{\|D_{k+1}\|_{\mathcal{C}^0(\bar\Omega^{k+1})}}{
\|D_k\|_{\mathcal{C}^0(\bar\Omega^{k})}} \leq \frac{
\|A\|_{\mathcal{C}^{0, \beta}(\bar\Omega^{0})} \|D_k\|_{\mathcal{C}^0(\bar\Omega^{k}}^{\frac{\beta}{2}-1}}{M_k^\beta}
+ \frac{(1.9)10^{15}}{\sigma_k}\leq \frac{1}{\sigma_k^s},$$
where the bound on the second term follows as in the case $k=0$, while
to estimate the first term:
\begin{equation*}
\frac{\|A\|_{\mathcal{C}^{0, \beta}(\bar\Omega^{0})}
  \|D_k\|_{\mathcal{C}^0(\bar\Omega^{k})}^{\frac{\beta}{2}-1}}{M_k^\beta}
\leq \frac{1}{2N
  \sigma_{max}^s\mathfrak{C}^{k\beta}\sigma_k^{3\beta}\prod_{j=0}^{k-1}\sigma_j^{3\beta
  + s(\frac{\beta}{2}-1)}}\leq \frac{1}{2\sigma_k^s},
\end{equation*}
we used the inductive assumption, (\ref{M}) and (\ref{s}), implying that $3\beta + s(\frac{\beta}{2}-1)>0$.
This ends the proof of (\ref{D}).

Observe also that by (\ref{D}), (\ref{sigma}) and the assumed bound on
$\delta_0$ we get, for all $k\geq 0$:
\begin{equation}\label{vk}
\begin{split}
1+ \|\nabla v_k\|_{\mathcal{C}^0(\bar\Omega^k)}& \leq 1+\|\nabla
v_0\|_{\mathcal{C}^0(\bar\Omega^0)} + (1.1)
10^{8}\sum_{j=0}^\infty\|D_k\|_{\mathcal{C}^0(\bar\Omega^k)}^{\frac{1}{2}}
\\ & \leq 
1+\|\nabla v_0\|_{\mathcal{C}^0(\bar\Omega^0)} + (1.1)
10^{8} \|D_0\|_{\mathcal{C}^0(\bar\Omega^0)}^{\frac{1}{2}}\Big(\sum_{j=0}^\infty 
\frac{1}{\prod_{i=0}^{j-1}\sigma_i^{s}}\Big)^{\frac{1}{2}} \\ & 
\leq  1+\|\nabla v_0\|_{\mathcal{C}^0(\bar\Omega^0)} + (1.1)
10^{8} \cdot \delta_0^{\frac{1}{2}}\sum_{j=0}^\infty (\frac{3}{4})^j
\\ & \leq 1+\|\nabla v_0\|_{\mathcal{C}^0(\bar\Omega^0)} + 1.2 
\leq (2.2) \big(1+\|\nabla v_0\|_{\mathcal{C}^0(\bar\Omega^0)}\big).
\end{split}
\end{equation}

\smallskip

{\bf 3.} Clearly, when $D_k=0$, we stop the recursive process
and set $(\bar v, \bar w)\doteq (v_k, w_k)$ with the claimed error estimates
established in the next step below.  We now proceed by validating the
assumptions of Proposition \ref{Stage3}  in case $D_k\neq 0$. 

The bound $\|D_k\|_{\mathcal{C}^0(\bar\Omega^k)}\leq \delta_0$ is a
consequence  of (\ref{D}) and $\sigma_j\geq 1$ for all $j\geq 0$. Similarly:
$$\frac{2^{k+1}}{\delta_0}\|D_k\|_{\mathcal{C}^0(\bar\Omega^k)}\leq M_k$$
follows by (\ref{MM}) for $k=0$, whereas for $k\geq 1$ we additionally use (\ref{D}) in:
$$\frac{2^{k+1}}{\delta_0}\|D_k\|_{\mathcal{C}^0(\bar\Omega^k)}\leq
\frac{2^{k+1}}{\delta_0}\|D_0\|_{\mathcal{C}^0(\bar\Omega^0)}\leq
\frac{2^{k+1}}{\delta_0^{\frac{1}{2}}}\leq 2^k M_0\leq M_k.$$
The estimates: $\|\nabla^2 v_0\|_{\mathcal{C}^0(\bar\Omega^0)}, 
\|\nabla^2 w_0\|_{\mathcal{C}^0(\bar\Omega^0)}\leq M_0$ are valid if
$N$ is chosen large enough. Finally, for all $k\geq 1$ we have:
\begin{equation*}
\begin{split}
& \|\nabla^2 v_k\|_{\mathcal{C}^0(\bar\Omega^k)}\leq (7.3) 10^8 M_{k-1}\sigma_{k-1}^3 \leq M_k,\\
& \|\nabla^2 w_k\|_{\mathcal{C}^0(\bar\Omega^k)} \, \leq (9.5) 10^8
M_{k-1}\sigma_{k-1}^3 \big(1+\|\nabla
v_{k-1}\|_{\mathcal{C}(\bar\Omega^{k-1})}\big) \\
&\qquad \qquad \quad \quad\leq (20.9) 10^8  M_{k-1}\sigma_{k-1}^3 \big(1+\|\nabla
v_{0}\|_{\mathcal{C}(\bar\Omega^{0})}\big)\leq M_k,
\end{split}
\end{equation*}
in virtue of (\ref{M}), (\ref{vk}), (\ref{C}) and  Proposition \ref{Stage3}.

\smallskip

{\bf 4.} We now show that the sequences $\{v_k\}_{k=0}^\infty$,
$\{w_k\}_{k=0}^\infty$  converge in $\mathcal{C}^{1,
  \alpha}(\bar\Omega)$ to some $\bar v, \bar w$. By (\ref{D}) this will
imply that $A- \big(\frac{1}{2}\nabla \bar v\otimes \nabla \bar v +
\mbox{sym}\nabla \bar w\big)=\lim_{k\to\infty} D_k = 0$ together with 
the desired estimates on $\|\bar v - v\|_0$ and $\|\bar w - w\|_0$. Indeed:
\begin{equation*}
\begin{split}
\|v_k - v_0\|_0 &\leq \sum_{i=1}^{k-1}\|v_{i+1}-v_i\|_0 \leq
\sum_{i=0}^\infty \frac{(1.8)10^7}{M_i} \|D_i\|_{\mathcal{C}^0(\bar\Omega^i)}\\ & \leq 
(1.8) 10^7 \bigg(\frac{\|D_0\|_{\mathcal{C}^0(\bar\Omega^0)}}{M_0} +
\sum_{i=1}^\infty \frac{\|D_0\|_{\mathcal{C}^0(\bar\Omega^0)}}{M_i\prod_{j=0}^{i-1}\sigma_j^s}\bigg) 
\leq  (1.8) 10^7 \frac{\|D_0\|_{\mathcal{C}^0(\bar\Omega^0)}}{M_0}
\bigg( 1 + \sum_{i=1}^\infty \frac{1}{\mathfrak{C}^i}\bigg) \\ & = (1.8) 10^7 \frac{\delta_0}{M_0}
\frac{\mathfrak{C}}{\mathfrak{C}-1} \leq (1.8) 10^7
\frac{\delta_0^{\frac{1}{2}}}{2}\cdot \delta_0\leq (0.21)\delta_0, 
\end{split}
\end{equation*}
by Proposition \ref{Stage3}, (\ref{D}), (\ref{M}). Automatically, the
bound above implies that $\{v_k\}_{k=0}^\infty$ is Cauchy in
$\mathcal{C}^0(\bar\Omega)$. The similar statements for
$\{w_k\}_{k=0}^\infty$ follow by (\ref{vk}) in:
\begin{equation*}
\begin{split}
\|w_k - w_0\|_0 &\leq \sum_{i=1}^{k-1}\|w_{i+1}-w_i\|_0 \leq
\sum_{i=0}^\infty \Big(\frac{(1.8)10^7}{M_i} +
(12.6) \cdot \mbox{diam}\,\Omega^0\Big) \|D_i\|_{\mathcal{C}^0(\bar\Omega^i)}
\big(1+\|\nabla v_i\|_{\mathcal{C}^0(\bar\Omega^i)}\big)
\\ & \leq  (2.2) \big(1+\|\nabla
v_0\|_{\mathcal{C}^0(\bar\Omega^0)}\big)\Big((12.6)(\mbox{diam}\,\Omega^0)
\sum_{i=0}^\infty \|D_i\|_{\mathcal{C}^0(\bar\Omega^i)} +
(0.21)\delta_0\Big) \\ & \leq  \delta_0 (2.2) \big(1+\|\nabla
v_0\|_{\mathcal{C}^0(\bar\Omega^0)}\big)\Big((12.6) \frac{16}{7}
(\mbox{diam}\,\Omega^0) + 0.21\Big) \\ & \leq \delta_0 \big(1+\|\nabla
v_0\|_{\mathcal{C}^0(\bar\Omega^0)}\big)\Big( (63.4)
(\mbox{diam}\,\Omega^0) + 0.5\Big).
\end{split}
\end{equation*}
The fact that $\{v_k\}_{k=0}^\infty$,  $\{w_k\}_{k=0}^\infty$ are Cauchy in
$\mathcal{C}^1(\bar\Omega)$ is obtained in:
\begin{equation*}
\begin{split}
& \|\nabla v_{k+1}-\nabla v_k\|_{0}\leq (1.1) 10^8 \Big(\frac{\delta_0}{\prod_{j=0}^{k-1}\sigma_j^s}\Big)^{\frac{1}{2}}
\leq \big(\frac{3}{4}\big)^{k}(1.1) 10^8 \delta_0^{\frac{1}{2}}\\
& \|\nabla w_{k+1}-\nabla w_k\|_{0}\leq \big(\frac{3}{4}\big)^{k}(1.1)
10^8 \delta_0^{\frac{1}{2}} \cdot (2.2) \big(1+\|\nabla v_0\|_{\mathcal{C}^0(\bar\Omega^0)}\big).
\end{split}
\end{equation*}
Finally, we estimate the $\mathcal{C}^{1,\alpha}$ norms by
interpolating from $\mathcal{C}^{1}$ to $\mathcal{C}^{2}$ norms in:
\begin{equation*}
\begin{split}
[\nabla v_{k+1}-\nabla v_k]_{\alpha} & \leq C \|\nabla v_{k+1}-\nabla v_k\|_{\mathcal{C}^0(\bar\Omega^{k+1})}^{1-\alpha}
\|\nabla^2 v_{k+1}-\nabla^2 v_k\|_{\mathcal{C}^0(\bar\Omega^{k+1})}^{\alpha} \\ & \leq 
C \frac{1}{\prod_{j=0}^{k-1} \sigma_j^{\frac{s}{2}(1-\alpha)}} \cdot
\sigma_k^{3\alpha}\cdot \Big(\mathfrak{C}^k \prod_{j=0}^{k-1}\sigma_j^3\Big)^\alpha
\leq C \sigma_{max}^{3\alpha}\cdot \mathfrak{C}^k
\prod_{j=0}^{k-1}\sigma_j^{3\alpha - \frac{s}{2}(1-\alpha)} \\ & \leq
C \big(\mathfrak{C}\cdot (\sigma_{max})^{3\alpha - \frac{s}{2}(1-\alpha)}\big)^k,\\
[\nabla w_{k+1}-\nabla w_k]_{\alpha}  & \leq C  \|\nabla w_{k+1}-\nabla w_k\|_{\mathcal{C}^0(\bar\Omega^{k+1})}^{1-\alpha}
\|\nabla^2 w_{k+1}-\nabla^2
w_k\|_{\mathcal{C}^0(\bar\Omega^{k+1})}^{\alpha} \\ & \leq 
C (1+\|\nabla v_{0}\|_{\mathcal{C}^0(\bar\Omega^{0})})^{1-\alpha}
\big(\mathfrak{C}\cdot (\sigma_{max})^{3\alpha - \frac{s}{2}(1-\alpha)}\big)^k.
\end{split}
\end{equation*}
Above, the constant $C$ depends, in its first appearance, only on the
curvature of a smooth superset of $\bar\Omega$ contained in
$\Omega_r$. Eventually, $C$ depends on various 
numerical values including $M_0$, $\alpha$ and $\sigma_{max}$ but it
is independent of $k$. We see that the last condition in (\ref{sigma})
implies that the constant of the geometric progression in the right
hand sides is less than $1$ for large $k$, that is when $\sigma_k$ is
close to $\sigma_{max}$. This establishes that $\{v_k\}_{k=0}^\infty$,
$\{w_k\}_{k=0}^\infty$  are Cauchy sequences in $\mathcal{C}^{1,
  \alpha}(\bar\Omega)$ and completes the proof of Proposition \ref{prop2.4.4}.
\end{proof} 

\bigskip

\noindent {\bf Proof of Theorem \ref{w2oi-hld}. }
 
\smallskip

Let $f, v_0$ be as in Theorem \ref{weakMA}. Fix a small parameter
$\epsilon>0$. We first extend
$v_0$ to a continuous function on $\mathbb{R}^2$ and approximate this extension
with $\bar v_0\in\mathcal{C}^\infty(\mathbb{R}^2)$ satisfying:
$$\|\bar v_0 - v_0\|_0\leq \epsilon.$$ 
Let $A=(\lambda+c)\mbox{Id}_2\in\mathcal{C}^{0,\beta}(\bar\Omega, \mathbb{R}^{2\times
  2}_{sym})$ where $c>0$ is a constant and $\lambda\in
\mathcal{C}^{0,\beta}(\bar V)$ solves: $-\Delta\lambda =
\chi_{\bar\Omega} f$ in an open superset $V$ of $\bar\Omega$, with
$\lambda = 0$ on $\partial V$. Then, as discussed in the introduction, $A=\frac{1}{2}\nabla v_0\otimes
\nabla v_0 + \mbox{sym} \nabla w_0$ holds in $\bar\Omega_r$ for some
$w_0\in\mathcal{C}^1(\bar\Omega_r, \mathbb{R}^2)$ on $\Omega_r=\Omega
+ B_r(0)$ for some $r>0$. Note that if $c$ is large enough to guarantee:
\begin{equation*}
\begin{split}
& \frac{3}{4}(\lambda +c) -\frac{1}{2}(\partial_1\bar v_0)^2 +
\frac{1}{8}(\partial_2 \bar v_0)^2 - \partial_1w_0^1 + \frac{1}{4}\partial_2w_0^2>0,\\
& (\lambda+c) -\frac{1}{2}(\partial_2\bar v_0)^2 -
(\partial_1 \bar v_0)(\partial_2 \bar v_0) - \partial_2w_0^2
- \partial_1w_0^2 - \partial_2w_0^1>0, \qquad \mbox{in }~ \bar\Omega_r,\\
& (\lambda+c) -\frac{1}{2}(\partial_2\bar v_0)^2 +
(\partial_1 \bar v_0)(\partial_2 \bar v_0) - \partial_2w_0^2 + \partial_1w_0^2 + \partial_2w_0^1>0,
\end{split}
\end{equation*}
then in view of Lemma \ref{CoeffFormula} it follows that:
$$ A - \big(\frac{1}{2}\nabla \bar v_0\otimes \nabla \bar v_0 + \mbox{sym}
\nabla w_0\big) = \sum_{k=1}^3\phi_k \eta_k\otimes \eta_k, \qquad
\phi_k>0 ~~\mbox{ in } \bar\Omega_r.$$
Clearly, the field $w_0$ may be exchanged with a smooth field $\bar
w_0\in\mathcal{C}^\infty(\bar\Omega_r, \mathbb{R}^2)$ so that the
positive coefficient decomposition above is still valid.
Applying now Theorem \ref{Approx2}, we get
$v\in\mathcal{C}^\infty(\bar\Omega_r)$ and $w\in\mathcal{C}^\infty(\bar\Omega_r, \mathbb{R}^2)$ 
such that:
$$\|v - \bar v_0\|_0\leq \epsilon \quad \mbox{ and }\quad
\|A-\big(\frac{1}{2}\nabla v\otimes \nabla v + \mbox{sym} \nabla
w\big) \|_0\leq\min\big\{\epsilon, r, (5.4) 10^{-16}\big\}.$$
In virtue of Proposition \ref{prop2.4.4}, we further find
$\bar v\in\mathcal{C}^{1,\alpha}(\bar\Omega)$ and
$\bar w\in\mathcal{C}^{1,\alpha}(\bar\Omega, \mathbb{R}^2)$  solving:
$\frac{1}{2}\nabla \bar v\otimes \nabla \bar v + \mbox{sym} \nabla 
\bar w  = A$ in $\bar\Omega$, with the approximation error:
$$\|\bar v - v\|_0\leq (0.21)\epsilon.$$
Concluding, the function $\bar v$ is a H\"older regular solution to (\ref{MA}), approximating the
given $v_0$ with the arbitrarily small error:
$$\|\bar v - v_0\|_0\leq \|\bar v - v\|_0+\|v - \bar v_0\|_0+\|\bar v_0 -
v_0\|_0\leq 3\epsilon,$$
as claimed in the result.
\endproof
 
 \section{Numerical implementation of the $\mathcal{C}^{1,\alpha}$
   convergence scheme}\label{num2_sec}

In section \ref{Numerical1} we implemented the approximation
construction of Theorem \ref{Approx2} and Proposition \ref{Stage2}; 
we now implement the H\"older approximation
specified in Proposition \ref{Stage3} and Proposition \ref{prop2.4.4}.
Visualizations relevant to the $\mathcal{C}^{1,\alpha}$ case are much harder to obtain. 
The main difficulties come from the smallness of various parameter
values and and the fact that the ratio $\sigma$ between the
frequencies of subsequent corrugations is by necessity very large. 
This first obstacle was overcome through the use of the Python package
mpmath \cite{mpmath}, allowing the user to define floating point arithmetics up to an arbitrary precision. 
The second obstacle of the necessity of large $\sigma$ is insurmountable in most cases. 
Nonetheless, in the example of the 
degenerate Monge-Amp\'{e}re equation $\det\nabla^2 v = 0$, 
small values of $\lambda$ were found to produce a reduction in
the deficit which justified the visualization obtained below.  

The approach taken was significantly different from the case of $\mathcal{C}^1$ convergence.  
In each of the examples the problem was solved explicitly through symbolic calculations,
defining symbolic variables and parameters in MATLAB (the only
calculation which could not be done symbolically was the  mollification step).
The resulting text files, containing the outcomes were then modified through a script to make the syntax
compatible with the mpmath package. 
We subsequently sampled these functions at 1000 randomly selected
points on the square domain $\Omega=(-1,1)\times(-1,1)$,  
 and the maximum value attained by each function was recorded. 
The main purpose of these calculations was to study how the results
of convex integration varied by changing the value of $\sigma$. 
We argue that the maximum sample value, while not being the exact supremum of these functions on the domain, 
is a good approximation of the order of magnitude. 
In fact, when the process was repeated over different sets of 1000
sample points, the variation in the values obtained was negligible
when considering the order of magnitude, as illustrated in the table below. 
 
\bigskip

\begin{center}
\begin{tabular}{ |c|c|c|c| } 
    \hline
     Test & $\|\tilde D\|_0$ & $\|v_3\|_0$ & $\|\nabla w_3\|_0$ \\ 
    \hline
    1 & $0.64 \cdot 10^{-11}$ & $0.114 \cdot 10^{-23}$ & $0.53 \cdot 10^{-8}$ \\ 
    \hline
    2 & $0.57 \cdot 10^{-11}$ & $0.118 \cdot 10^{-23}$ & $0.58 \cdot 10^{-8}$ \\ 
    \hline
    3 & $0.62 \cdot 10^{-11}$ & $0.119 \cdot 10^{-23}$ & $0.54 \cdot 10^{-8}$ \\ 
    \hline
    4 & $0.61 \cdot 10^{-11}$ & $0.115 \cdot 10^{-23}$ & $0.49 \cdot 10^{-8}$ \\ 
    \hline
    5 & $0.66 \cdot 10^{-11}$ & $0.119 \cdot 10^{-23}$ & $0.54 \cdot 10^{-8}$ \\ 
    \hline
    6 & $0.70 \cdot 10^{-11}$ & $0.116 \cdot 10^{-23}$ & $0.56 \cdot 10^{-8}$ \\ 
    \hline
    7 & $0.65 \cdot 10^{-11}$ & $0.104 \cdot 10^{-23}$ & $0.62 \cdot 10^{-8}$ \\ 
    \hline
    8 & $0.61 \cdot 10^{-11}$ & $0.118 \cdot 10^{-23}$ & $0.67 \cdot 10^{-8}$ \\ 
    \hline
    9 & $0.61 \cdot 10^{-11}$ & $0.111 \cdot 10^{-23}$ & $0.50 \cdot 10^{-8}$ \\ 
    \hline
    10 & $0.58 \cdot 10^{-11}$ & $0.108 \cdot 10^{-23}$ & $0.76 \cdot 10^{-8}$ \\ 
    \hline
\end{tabular}
\end{center}
 
\vspace{3mm}

In this table we show the results of running our code 10 times on the same example with the same parameters, 
where the only change was the choice of the sample points. 
In each test we evaluated the maximum defect recorded, the maximum
value of $|v|$ and the maximum gradient of $w$.  
As can be seen, the order of magnitude of all these functions remains the same. 
Similar results were obtained with different examples, functions and
parameters, but the details are omitted as they do not add particular
insight.  

\begin{example}\label{Example_11}
We approximate $v_0=0$ and $w_0=0$ with the solution to:
$$\mathcal{D}et\,\nabla^2v = -10^{-18} < 0.$$ 
This results in $A = -10^{-18}(x^2+y^2)\mbox{Id}_2$, coinciding with
the original defect  $\|D\|_0 \sim 10^{-18}\sqrt{2}$. The defect
evaluated for $\sigma = 35$ ($1$st stage of convex integration, third step and third
corrugation added) becomes:  $\|D_3\|_0 \sim 9.7\cdot 10^{-19}$.
\end{example}


\begin{example}\label{Example_12}
We approximate $v_0=0$ and $w_0=0$ with the solution to:
$$\mathcal{D}et\,\nabla^2v = 10^{-18} > 0.$$ 
This results in $A = 10^{-18}(x^2+y^2)\mbox{Id}_2$, coinciding with
the original defect, where $\|D\|_0 \sim 10^{-18}\sqrt{2}$. 
The defect evaluated for $\sigma = 35$ becomes:  $\|D_3\|_0 \sim 9.1\cdot 10^{-19}$.
In Figures \ref{FigC1alpha2} and \ref{Fig2detail} we show visualizations of $v$ on the
subdomains $[1-10^{-19},1]^2$ and $[1-2\cdot 10^{-21},1]^2$, respectively.   
\end{example}

\begin{figure}[h]
\fbox{\includegraphics[height=10cm,trim={2.5cm 1cm 0cm 1cm},clip]{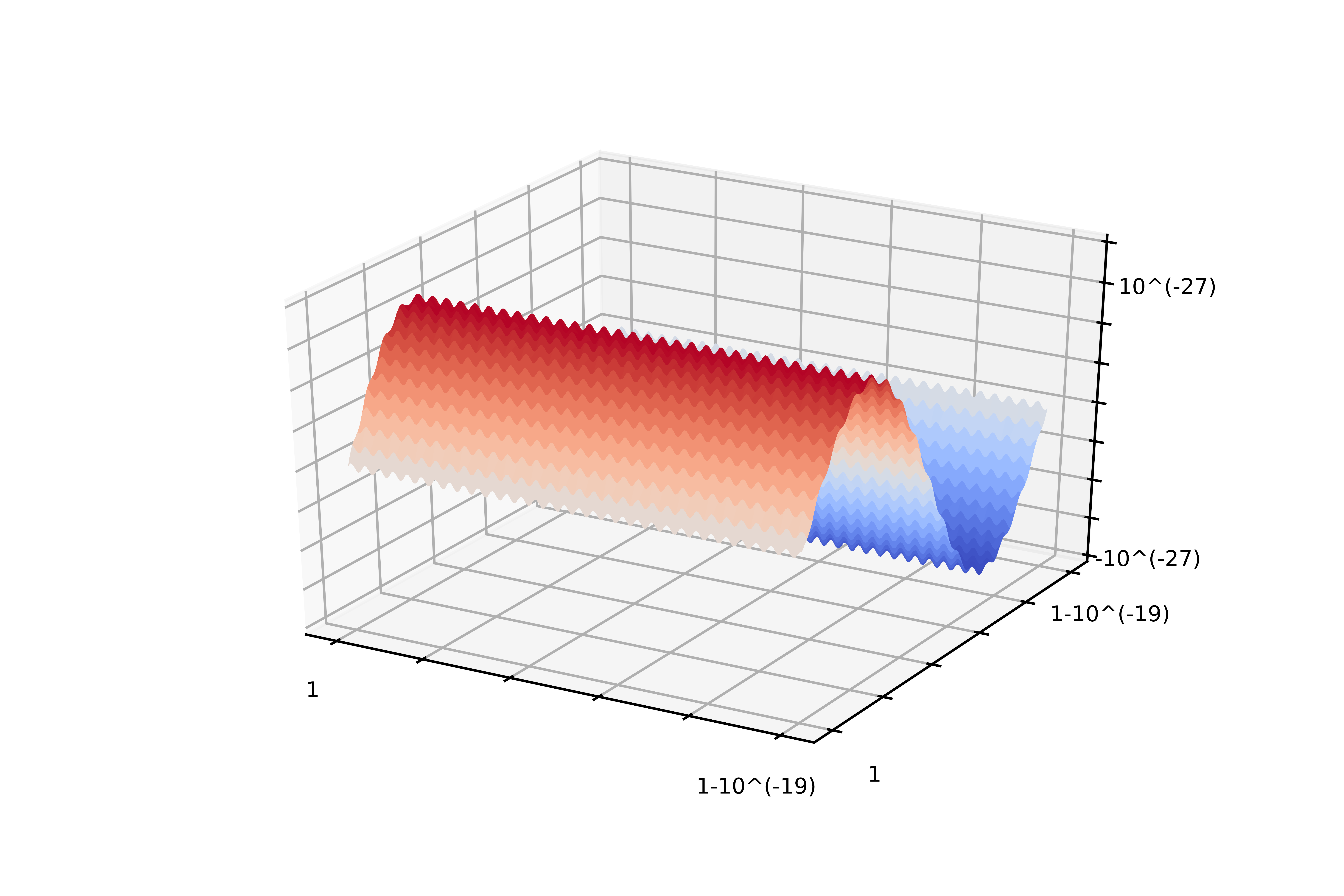}}
\caption{The three corrugations in Example \ref{Example_12}. }
\label{FigC1alpha2}
\end{figure}

\begin{figure}[h]
\fbox{\includegraphics[height=10cm,trim={2.5cm 1cm 0cm 1cm},clip]{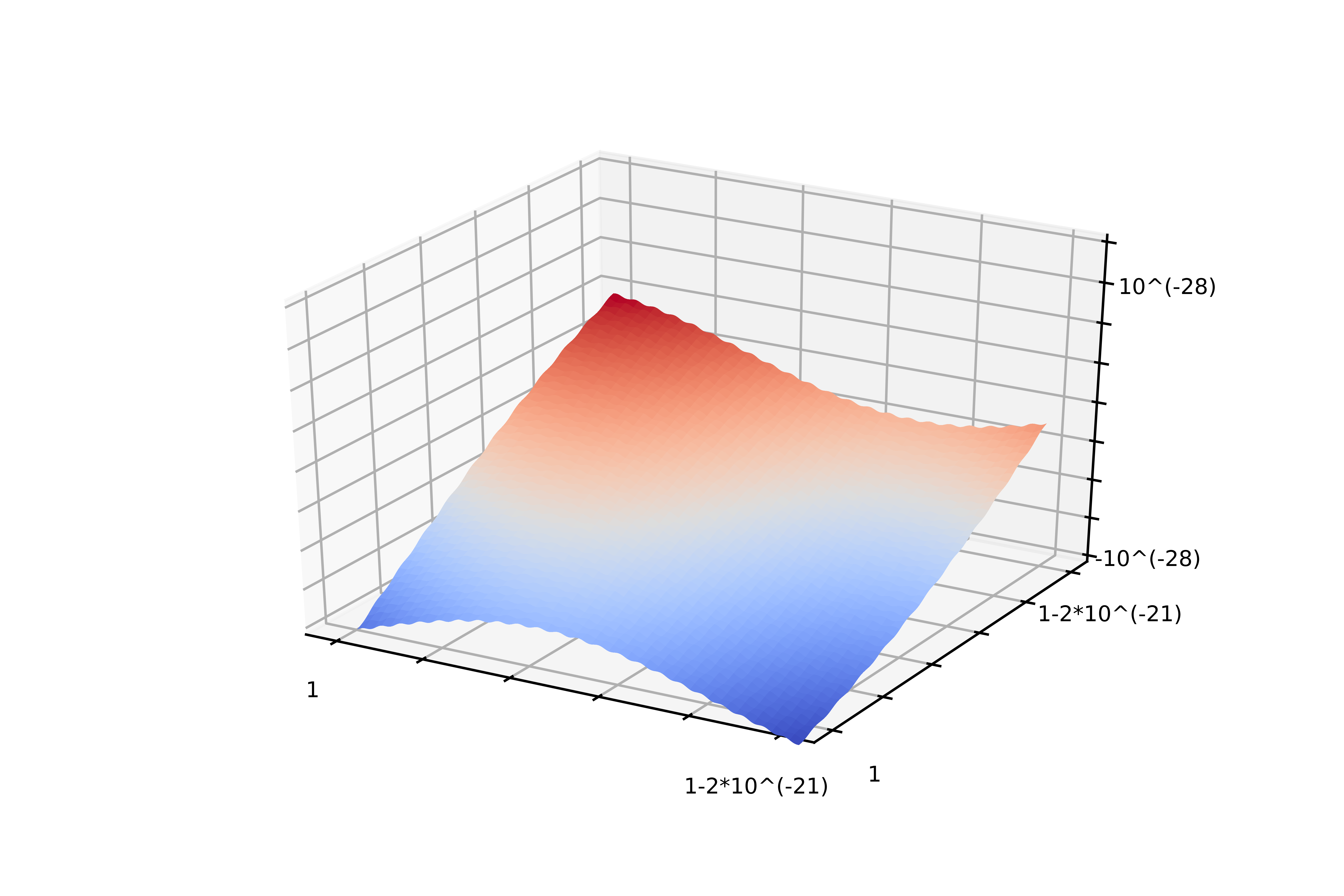}}
\caption{The detail of Figure \ref{FigC1alpha2}. }
\label{Fig2detail}
\end{figure}

\begin{example}\label{Example_13}
We approximate $v_0(x,y) = 10^{-9} (x^2+y^2)$ and $w_0=0$ with the solution to:
$$\mathcal{D}et\,\nabla^2v = 0.$$ 
Thus $A = 0$ and the initial defect is given by:
$ D(x,y) =\frac{1}{2}\nabla v_0\otimes \nabla v_0 =  10^{-18} \cdot 2\cdot (x,y)\otimes (x,y)$ with $ \|D\|_0 \sim
4\cdot 10^{-18}$. The defect evaluated for $\sigma = 35$ becomes:  $\|D_3\|_0 \sim 9.5\cdot 10^{-19}$.
In Figure \ref{FigC1alpha2} we show a visualization of $v$ on the
subdomain $[1-10^{-19},1]^2$. 
\end{example}
  
\begin{figure}[h]
\fbox{\includegraphics[height=10cm,trim={2.5cm 1cm 0cm 1cm},clip]{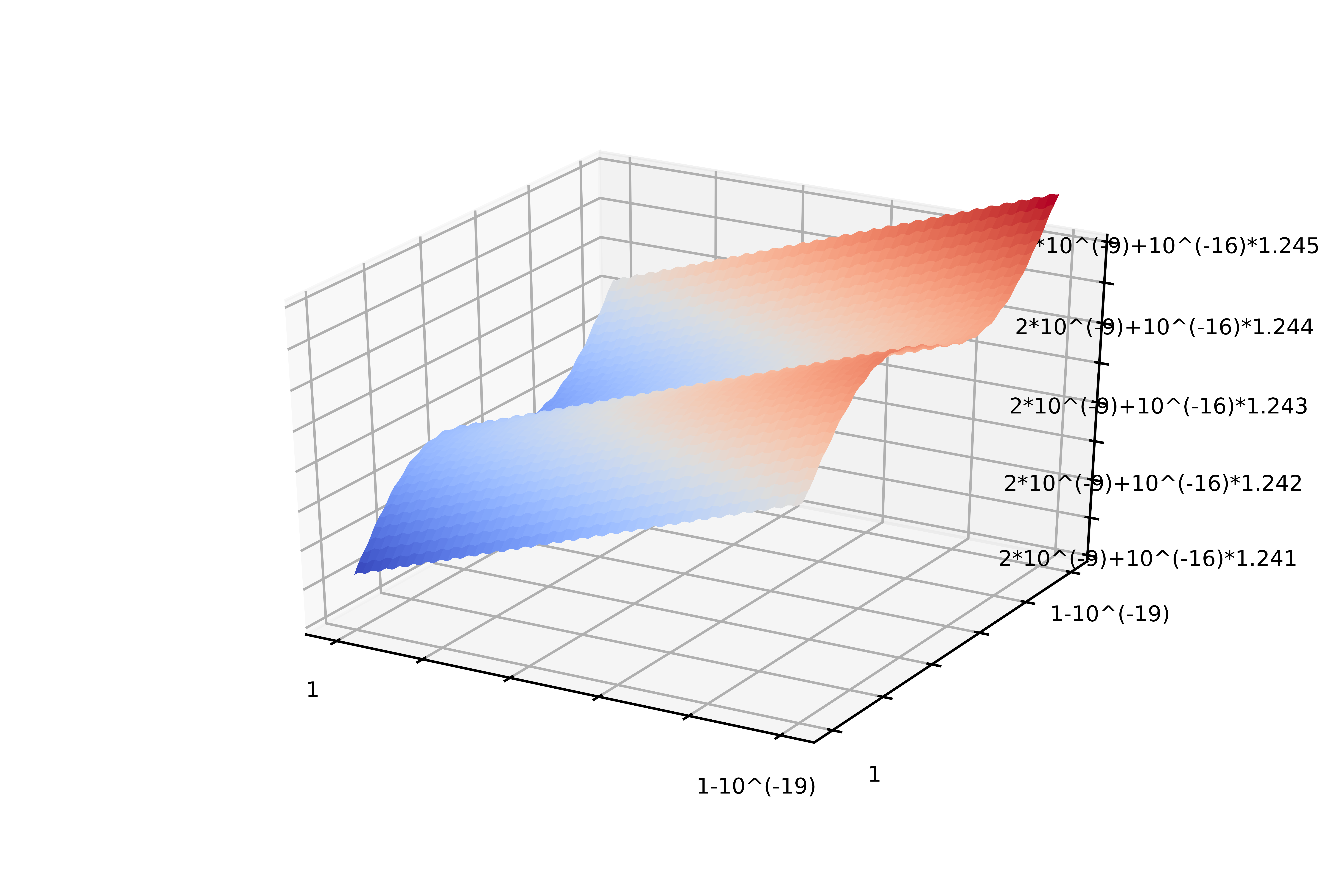}}
\caption{The corrugations in Example \ref{Example_13}.}
\label{FigC1alpha3}
\end{figure}

In each of the examples a value of $\lambda_1 = 10^{19}$ was chosen
and $\sigma$ was increased exponentially from $10^1$ to $10^{18}$,
covering the orders of magnitude between the theoretical minimum of
$\sigma$ and $\sigma_{max}$.  
We then applied the modification of $w$ and the three steps of convex
integration in Proposition \ref{Stage3} and evaluated the maximum of the new defect, the norm of
$v$ together with is gradient and Hessian.
The numerical results obtained are summarized in the tables in Appendix \ref{secapen}.


\appendix
\section{Error tables for numerical calculations of the
  $\mathcal{C}^{1,\alpha}$ implementation}\label{secapen}

   \begin{table}[h]
   \label{normDtable}
   \caption{Values of the defect $\|D_3\|_0$}
  \begin{center}
  \begin{tabular}{ |c|c|c|c| }
    \hline
     $\sigma$ & Example \ref{Example_11} & Example \ref{Example_12} & Example \ref{Example_13}\\ 
    \hline
    $10^1$ & $0.332 \cdot 10^{-17}$ & $0.393 \cdot 10^{-17}$ & $0.327 \cdot 10^{-17}$ \\ 
    \hline
    $10^2$ & $0.316 \cdot 10^{-18}$ & $0.364 \cdot 10^{-18}$ & $0.319 \cdot 10^{-18}$ \\ 
    \hline
    $10^3$ & $0.318 \cdot 10^{-19}$ & $0.376 \cdot 10^{-19}$ & $0.332 \cdot 10^{-19}$ \\ 
    \hline
    $10^4$ & $0.318 \cdot 10^{-20}$ & $0.396 \cdot 10^{-20}$ & $0.318 \cdot 10^{-20}$ \\ 
    \hline
    $10^5$ & $0.320 \cdot 10^{-21}$ & $0.401 \cdot 10^{-21}$ & $0.316 \cdot 10^{-21}$ \\ 
    \hline
    $10^6$ & $0.336 \cdot 10^{-22}$ & $0.400 \cdot 10^{-22}$ & $0.325 \cdot 10^{-22}$ \\ 
    \hline
    $10^7$ & $0.326 \cdot 10^{-23}$ & $0.375 \cdot 10^{-23}$ & $0.330 \cdot 10^{-23}$ \\ 
    \hline
    $10^8$ & $0.332 \cdot 10^{-24}$ & $0.367 \cdot 10^{-24}$ & $0.335 \cdot 10^{-24}$ \\ 
    \hline
    $10^9$ & $0.329 \cdot 10^{-25}$ & $0.366 \cdot 10^{-25}$ & $0.329 \cdot 10^{-25}$ \\ 
    \hline
    $10^{10}$ & $0.328 \cdot 10^{-26}$ & $0.382 \cdot 10^{-26}$ & $0.339 \cdot 10^{-26}$ \\ 
    \hline
    $10^{11}$ & $0.338 \cdot 10^{-27}$ & $0.399 \cdot 10^{-27}$ & $0.326 \cdot 10^{-27}$ \\ 
    \hline
    $10^{12}$ & $0.329 \cdot 10^{-28}$ & $0.371 \cdot 10^{-28}$ & $0.327 \cdot 10^{-28}$ \\ 
    \hline
    $10^{13}$ & $0.317 \cdot 10^{-29}$ & $0.396 \cdot 10^{-29}$ & $0.333 \cdot 10^{-29}$ \\ 
    \hline
    $10^{14}$ & $0.320 \cdot 10^{-30}$ & $0.388 \cdot 10^{-30}$ & $0.325 \cdot 10^{-30}$ \\ 
    \hline
    $10^{15}$ & $0.311 \cdot 10^{-31}$ & $0.384 \cdot 10^{-31}$ & $0.336 \cdot 10^{-31}$ \\ 
    \hline
    $10^{16}$ & $0.324 \cdot 10^{-32}$ & $0.366 \cdot 10^{-32}$ & $0.321 \cdot 10^{-32}$ \\ 
    \hline
  \end{tabular}
  \end{center}
  \end{table}

  \begin{table}
   \label{normgvtable}
   \caption{Values of $\|\nabla v_3\|_0$}
  \begin{center}
  \begin{tabular}{ |c|c|c|c| }
    \hline
     $\sigma$ & Example \ref{Example_11} & Example \ref{Example_12} & Example \ref{Example_13}\\ 
    \hline
    $10^1$ & $0.941 \cdot 10^{-8}$ & $0.101 \cdot 10^{-7}$ & $0.106 \cdot 10^{-7}$ \\ 
    \hline
    $10^2$ & $0.937 \cdot 10^{-8}$ & $0.105 \cdot 10^{-7}$ & $0.102 \cdot 10^{-7}$ \\ 
    \hline
    $10^3$ & $0.920 \cdot 10^{-8}$ & $0.102 \cdot 10^{-7}$ & $0.102 \cdot 10^{-7}$ \\ 
    \hline
    $10^4$ & $0.936 \cdot 10^{-8}$ & $0.994 \cdot 10^{-8}$ & $0.104 \cdot 10^{-7}$ \\ 
    \hline
    $10^5$ & $0.953 \cdot 10^{-8}$ & $0.101 \cdot 10^{-7}$ & $0.104 \cdot 10^{-7}$ \\ 
    \hline
    $10^6$ & $0.935 \cdot 10^{-8}$ & $0.101 \cdot 10^{-7}$ & $0.105 \cdot 10^{-7}$ \\ 
    \hline
    $10^7$ & $0.946 \cdot 10^{-8}$ & $0.102 \cdot 10^{-7}$ & $0.103 \cdot 10^{-7}$ \\ 
    \hline
    $10^8$ & $0.936 \cdot 10^{-8}$ & $0.101 \cdot 10^{-7}$ & $0.103 \cdot 10^{-7}$ \\ 
    \hline
    $10^9$ & $0.942 \cdot 10^{-8}$ & $0.100 \cdot 10^{-7}$ & $0.104 \cdot 10^{-7}$ \\ 
    \hline
    $10^{10}$ & $0.934 \cdot 10^{-8}$ & $0.101 \cdot 10^{-7}$ & $0.101 \cdot 10^{-7}$ \\ 
    \hline
    $10^{11}$ & $0.931 \cdot 10^{-8}$ & $0.102 \cdot 10^{-7}$ & $0.103 \cdot 10^{-7}$ \\ 
    \hline
    $10^{12}$ & $0.926 \cdot 10^{-8}$ & $0.101 \cdot 10^{-7}$ & $0.104 \cdot 10^{-7}$ \\ 
    \hline
    $10^{13}$ & $0.939 \cdot 10^{-8}$ & $0.102 \cdot 10^{-7}$ & $0.104 \cdot 10^{-7}$ \\ 
    \hline
    $10^{14}$ & $0.940 \cdot 10^{-8}$ & $0.995 \cdot 10^{-8}$ & $0.103 \cdot 10^{-7}$ \\ 
    \hline
    $10^{15}$ & $0.945 \cdot 10^{-8}$ & $0.986 \cdot 10^{-8}$ & $0.102 \cdot 10^{-7}$ \\ 
    \hline
    $10^{16}$ & $0.920 \cdot 10^{-8}$ & $0.103 \cdot 10^{-7}$ & $0.104 \cdot 10^{-7}$ \\ 
    \hline
  \end{tabular}
  \end{center}
  \end{table}

  \begin{table}
   \label{normgwtable}
   \caption{Values of $\|\nabla w_3\|_0$}
  \begin{center}
  \begin{tabular}{ |c|c|c|c| }
    \hline
     $\sigma$ & Example \ref{Example_11} & Example \ref{Example_12} & Example \ref{Example_13}\\ 
    \hline
    $10^1$ & $0.730 \cdot 10^{-16}$ & $0.432 \cdot 10^{-16}$ & $0.844 \cdot 10^{-16}$ \\ 
    \hline
    $10^2$ & $0.728 \cdot 10^{-16}$ & $0.445 \cdot 10^{-16}$ & $0.857 \cdot 10^{-16}$ \\ 
    \hline
    $10^3$ & $0.687 \cdot 10^{-16}$ & $0.430 \cdot 10^{-16}$ & $0.813 \cdot 10^{-16}$ \\ 
    \hline
    $10^4$ & $0.712 \cdot 10^{-16}$ & $0.458 \cdot 10^{-16}$ & $0.835 \cdot 10^{-16}$ \\ 
    \hline
    $10^5$ & $0.754 \cdot 10^{-16}$ & $0.424 \cdot 10^{-16}$ & $0.848 \cdot 10^{-16}$ \\ 
    \hline
    $10^6$ & $0.727 \cdot 10^{-16}$ & $0.422 \cdot 10^{-16}$ & $0.873 \cdot 10^{-16}$ \\ 
    \hline
    $10^7$ & $0.716 \cdot 10^{-16}$ & $0.444 \cdot 10^{-16}$ & $0.773 \cdot 10^{-16}$ \\ 
    \hline
    $10^8$ & $0.723 \cdot 10^{-16}$ & $0.444 \cdot 10^{-16}$ & $0.859 \cdot 10^{-16}$ \\ 
    \hline
    $10^9$ & $0.727 \cdot 10^{-16}$ & $0.420 \cdot 10^{-16}$ & $0.833 \cdot 10^{-16}$ \\ 
    \hline
    $10^{10}$ & $0.721 \cdot 10^{-16}$ & $0.431 \cdot 10^{-16}$ & $0.752 \cdot 10^{-16}$ \\ 
    \hline
    $10^{11}$ & $0.698 \cdot 10^{-16}$ & $0.478 \cdot 10^{-16}$ & $0.808 \cdot 10^{-16}$ \\ 
    \hline
    $10^{12}$ & $0.672 \cdot 10^{-16}$ & $0.431 \cdot 10^{-16}$ & $0.843 \cdot 10^{-16}$ \\ 
    \hline
    $10^{13}$ & $0.692 \cdot 10^{-16}$ & $0.449 \cdot 10^{-16}$ & $0.785 \cdot 10^{-16}$ \\ 
    \hline
    $10^{14}$ & $0.733 \cdot 10^{-16}$ & $0.414 \cdot 10^{-16}$ & $0.789 \cdot 10^{-16}$ \\ 
    \hline
    $10^{15}$ & $0.739 \cdot 10^{-16}$ & $0.430 \cdot 10^{-16}$ & $0.800 \cdot 10^{-16}$ \\ 
    \hline
    $10^{16}$ & $0.687 \cdot 10^{-16}$ & $0.442 \cdot 10^{-16}$ & $0.779 \cdot 10^{-16}$ \\ 
    \hline
  \end{tabular}
  \end{center}
  \end{table}

  \begin{table}
   \label{normhvtable}
   \caption{Values of $\|\nabla^2 v_3\|_0$}
  \begin{center} 
  \begin{tabular}{ |c|c|c|c| }
    \hline
     $\sigma$ & Example \ref{Example_11} & Example \ref{Example_12} & Example \ref{Example_13}\\ 
    \hline
    $10^1$ & $3.37 \cdot 10^{13}$ & $3.05 \cdot 10^{13}$ & $3.26 \cdot 10^{13}$ \\ 
    \hline
    $10^2$ & $3.30 \cdot 10^{15}$ & $3.00 \cdot 10^{15}$ & $3.22 \cdot 10^{15}$ \\ 
    \hline
    $10^3$ & $3.27 \cdot 10^{17}$ & $2.98 \cdot 10^{17}$ & $3.28 \cdot 10^{17}$ \\ 
    \hline
    $10^4$ & $3.26 \cdot 10^{19}$ & $2.98 \cdot 10^{19}$ & $3.27 \cdot 10^{19}$ \\ 
    \hline
    $10^5$ & $3.27 \cdot 10^{21}$ & $2.99 \cdot 10^{21}$ & $3.29 \cdot 10^{21}$ \\ 
    \hline
    $10^6$ & $3.25 \cdot 10^{23}$ & $2.99 \cdot 10^{23}$ & $3.27 \cdot 10^{23}$ \\ 
    \hline
    $10^7$ & $3.28 \cdot 10^{25}$ & $2.99 \cdot 10^{25}$ & $3.23 \cdot 10^{25}$ \\ 
    \hline
    $10^8$ & $3.21 \cdot 10^{27}$ & $2.99 \cdot 10^{27}$ & $3.28 \cdot 10^{27}$ \\ 
    \hline
    $10^9$ & $3.26 \cdot 10^{29}$ & $2.98 \cdot 10^{29}$ & $3.28 \cdot 10^{29}$ \\ 
    \hline
    $10^{10}$ & $3.30 \cdot 10^{31}$ & $2.99 \cdot 10^{31}$ & $3.30 \cdot 10^{31}$ \\ 
    \hline
    $10^{11}$ & $3.25 \cdot 10^{33}$ & $2.99 \cdot 10^{33}$ & $3.23 \cdot 10^{33}$ \\ 
    \hline
    $10^{12}$ & $3.28 \cdot 10^{35}$ & $2.99 \cdot 10^{35}$ & $3.24 \cdot 10^{35}$ \\ 
    \hline
    $10^{13}$ & $3.23 \cdot 10^{37}$ & $2.99 \cdot 10^{37}$ & $3.25 \cdot 10^{37}$ \\ 
    \hline
    $10^{14}$ & $3.26 \cdot 10^{39}$ & $2.98 \cdot 10^{39}$ & $3.25 \cdot 10^{39}$ \\ 
    \hline
    $10^{15}$ & $3.25 \cdot 10^{41}$ & $2.99 \cdot 10^{41}$ & $3.27 \cdot 10^{41}$ \\ 
    \hline
    $10^{16}$ & $3.29 \cdot 10^{43}$ & $2.98 \cdot 10^{43}$ & $3.20 \cdot 10^{43}$ \\ 
    \hline
  \end{tabular}
  \end{center}
  \end{table}

  \begin{table}
   \label{normhwtable}
   \caption{Values of $\|\nabla^2 w_3\|_0$}
  \begin{center}
  \begin{tabular}{ |c|c|c|c| }
    \hline
     $\sigma$ & Example \ref{Example_11} & Example \ref{Example_12} & Example \ref{Example_13}\\ 
    \hline
    $10^1$ & $3.11 \cdot 10^{5}$ & $2.80 \cdot 10^{5}$ & $2.12 \cdot 10^{5}$ \\ 
    \hline    
    $10^2$ & $3.12 \cdot 10^{7}$ & $2.82 \cdot 10^{7}$ & $2.05 \cdot 10^{7}$ \\ 
    \hline
    $10^3$ & $3.20 \cdot 10^{9}$ & $2.87 \cdot 10^{9}$ & $2.23 \cdot 10^{9}$ \\ 
    \hline
    $10^4$ & $3.17 \cdot 10^{11}$ & $2.60 \cdot 10^{11}$ & $2.18 \cdot 10^{11}$ \\ 
    \hline
    $10^5$ & $3.12 \cdot 10^{13}$ & $2.85 \cdot 10^{13}$ & $2.15 \cdot 10^{13}$ \\ 
    \hline
    $10^6$ & $3.18 \cdot 10^{15}$ & $2.80 \cdot 10^{15}$ & $2.08 \cdot 10^{15}$ \\ 
    \hline
    $10^7$ & $3.12 \cdot 10^{17}$ & $2.79 \cdot 10^{17}$ & $2.18 \cdot 10^{17}$ \\ 
    \hline
    $10^8$ & $3.17 \cdot 10^{19}$ & $2.73 \cdot 10^{19}$ & $2.18 \cdot 10^{19}$ \\ 
    \hline
    $10^9$ & $3.27 \cdot 10^{21}$ & $2.74 \cdot 10^{21}$ & $2.18 \cdot 10^{21}$ \\ 
    \hline
    $10^{10}$ & $3.31 \cdot 10^{23}$ & $2.75 \cdot 10^{23}$ & $2.10 \cdot 10^{23}$ \\ 
    \hline
    $10^{11}$ & $3.21 \cdot 10^{25}$ & $2.65 \cdot 10^{25}$ & $2.13 \cdot 10^{25}$ \\ 
    \hline
    $10^{12}$ & $3.25 \cdot 10^{27}$ & $2.69 \cdot 10^{27}$ & $2.19 \cdot 10^{27}$ \\ 
    \hline
    $10^{13}$ & $3.10 \cdot 10^{29}$ & $2.79 \cdot 10^{29}$ & $2.08 \cdot 10^{29}$ \\ 
    \hline
    $10^{14}$ & $3.08 \cdot 10^{31}$ & $2.71 \cdot 10^{31}$ & $2.21 \cdot 10^{31}$ \\ 
    \hline
    $10^{15}$ & $3.29 \cdot 10^{33}$ & $2.79 \cdot 10^{33}$ & $2.20 \cdot 10^{33}$ \\ 
    \hline
    $10^{16}$ & $3.39 \cdot 10^{35}$ & $2.72 \cdot 10^{35}$ & $2.22 \cdot 10^{35}$ \\ 
    \hline
   \end{tabular}
  \end{center}
  \end{table}

\end{document}